\def\myarabic#1{\normalfont(\roman{#1})}
\newlist{theoremlist}{enumerate}{1}
\setlist[theoremlist]{label=\myarabic{theoremlisti},ref={\myarabic{theoremlisti}},itemindent=0pt,labelindent=0pt,
  leftmargin=*,noitemsep}
\renewcommand{\p@theoremlisti}{\perh@ps{\thetheorem}}
\protected\def\perh@ps#1#2{\textup{#1#2}}
\newcommand{\itemrefperh@ps}[2]{\textup{#2}}
\newcommand{\itemref}[1]{\begingroup\let\perh@ps\itemrefperh@ps\ref{#1}\endgroup}
\newcommand{\itemreff}[1]{\begingroup\let\perh@ps\itemrefperh@ps\getrefnumber{#1}\endgroup}
\newtheorem{theorem}{Theorem}[section]
\newtheorem{lemma}[theorem]{Lemma}
\newtheorem{proposition}[theorem]{Proposition}
\newtheorem{corollary}[theorem]{Corollary}
\newtheorem{conjecture}[theorem]{Conjecture}
\theoremstyle{definition}
\newtheorem{remark}[theorem]{Remark}
\theoremstyle{definition}
\newtheorem{notation}[theorem]{Notation}
\theoremstyle{definition}
\newtheorem{definition}[theorem]{Definition}
\theoremstyle{definition}
\newtheorem{problem}[theorem]{Problem}
\theoremstyle{definition}
\newtheorem{example}[theorem]{Example}
\crefname{figure}{Figure}{Figures}
\def\figref#1(#2){Figure~\hyperref[#1]{\ref*{#1}(#2)}}
\def\Bcal{\mathcal{B}}\def\Ocal{\mathcal{O}}\def\Pcal{\mathcal{P}}\def\Rcal{\mathcal{R}}\def\Xcal{\mathcal{X}}
\def\C{{\mathbb{C}}}
\def\R{{\mathbb{R}}}
\def\Z{{\mathbb{Z}}}
\def\F{{\mathbb{F}}}
\newcommand\parr[1]{{({#1})}}
\def\<{{\langle}}
\def\>{{\rangle}}
\def\eps{{\epsilon}}
\def\la{{\lambda}}
\def\id{\operatorname{id}}
\def\maxx{\operatorname{max}}
\def\minn{\operatorname{min}}
\def\Gr{\operatorname{Gr}}
\def\Pio{\Pi^\circ}
\def\Povtp_#1{\Pi_{#1}^{>0}}
\def\Povtnn_#1{\Pi_{#1}^{\geq0}}
\def\BND{\Bcal}
\def\id{{\operatorname{id}}}
\def\j{{\mathbf{j}}}
\def\Racc{R^\circ}
\def\Rich_#1^#2{\Racc_{#1,#2}}
\def\Richcl_#1^#2{R_{#1,#2}}
\def\O{{\Ocal}}
\def\h{{\mathfrak{h}}}
\def\HH{H\!\!H}
\def\Fbb{{\mathbb{F}}}
\def\Pio{\Pi^\circ}
\def\Pit{\Xcal^\circ}
\def\BS{\operatorname{BS}}
\numberwithin{equation}{section}
\def\FLY{HOMFLY\xspace}
\def\Richafftp_#1^#2{{\Rcal_{#1,#2}^{>0}}}
\def\Richaff_#1^#2{\accentset{\circ}{\mathcal{R}}_{#1,#2}}
\def\Cat{C}
\def\Dyck{\operatorname{Dyck}}
\def\Dyckxx(#1,#2){\Dyck_{#1,#2}}
\def\ncyc{\operatorname{c}}
\def\Bkn{\BND_{k,n}}
\def\Bknc{\Theta_{k,n}}
\def\Bxc_#1{\Theta_{#1}}
\def\betah{\hat\beta}
\def\fkn{{f_{k,n}}}
\def\Pit_#1{\Xcal^\circ_{#1}}
\def\k{\mathbbm{k}}
\def\BS{\operatorname{BS}}
\def\FR{F^\bullet}
\def\top{{\operatorname{top}}}
\def\Povar_#1{\accentset{\circ}{\Pi}_{#1}}
\def\Povarcl_#1{\Pi_{#1}}
\def\RPovar_#1{\accentset{\circ}{\Pi}^\R_{#1}}
\def\RPovarcl_#1{\Pi^\R_{#1}}
\def\Povtp_#1{\Pi_{#1}^{>0}}
\def\Povtnn_#1{\Pi_{#1}^{\geq0}}
\def\Gr{\operatorname{Gr}}
\def\KLR_#1^#2{R_{#1,#2}(q)}
\def\top{{\operatorname{top}}}
\def\Ptop_#1{P^\top_{#1}(q)}
\def\Ptopnoq_#1{P^\top_{#1}}
\def\O{\Ocal}
\def\mda(#1){\deg^\top_a(#1)}
\def\BS_#1{B_{#1}}
\def\bul{\bullet}
\def\HHB#1#2(#3){H^{#1,(#2)}(\HH^0(#3))}
\def\HHBC#1#2(#3){H^{#1,(#2)}(\HHC^0(#3))}
\def\HHX#1#2_#3{H^{#1,(#2)}(\HH^0(\FR_{#3}))}
\def\HHXC#1#2_#3{H^{#1,(#2)}(\HHC^0(\FR_{#3}))}
\def\HHXk#1#2_#3{H^{#1,(#2)}(\HHk^0(\FR_{#3}))}
\def\ksa[#1]{[#1]}
\def\FR{F^\bul}
\def\eps{\epsilon}
\def\k{{\mathbbm{k}}}
\def\Rsemi_#1^#2{\Racc_{#1,\overline{#2}}}
\def\HHC{\HH_\C}
\def\HHk{\HH_\k}
\def\PGL{\operatorname{PGL}}
\def\Rgauge_#1^#2{R^{\circ,\Delta=1}_{#1,#2}}
\def\drawbox(#1,#2){
\draw[black!50,dashed] (#1-0.5,#2-0.5) rectangle (#1+0.5,#2+0.5);
}
\def\drawgrid#1#2#3#4{
\foreach\i in {#1,...,#3}{
\foreach \j in {#2,...,#4}{
\drawbox(\i,\j)
}
}
}
\def\crossing{\scalebox{0.4}{\begin{tikzpicture}[baseline=(ZUZU.base)]
\coordinate(ZUZU) at (0,-0.5);\drawgrid{0}{0}{0}{0}\draw[line width=3pt, rounded corners=12] (0.00,-0.50)--(0.00,0.50);\draw[line width=3pt, rounded corners=12] (-0.50,0.00)--(0.50,0.00);
\end{tikzpicture}}\xspace} 
\def\elbow{\scalebox{0.4}{\begin{tikzpicture}[baseline=(ZUZU.base),xscale=-1]
\coordinate(ZUZU) at (0,-0.5);\drawgrid{0}{0}{0}{0}\draw[line width=3pt, rounded corners=12] (0.00,-0.50)--(0.00,0.00)--(0.50,0.00);\draw[line width=3pt, rounded corners=12] (-0.50,0.00)--(0.00,0.00)--(0.00,0.50);
\end{tikzpicture}}\xspace}
\def\maxx{\operatorname{max}}
\def\Go{\operatorname{Deo}}
\def\Gom{\Go^{\maxx}}
\numberwithin{equation}{section}
\def\slope{\operatorname{slope}}
\def\ton{\to n}
\def\Pinf{P_{\infty}}
\def\Qinf{Q_{\infty}}
\def\Cat{C}
\def\fb{\bar f}
\def\perm{\fb}
\def\Sn{S_n}
\def\Snc{{\rm Cyc}(n)}
\def\Sc_#1{{\rm Cyc}(#1)}
\def\fkn{{f_{k,n}}}
\def\Rt{\tilde R}
\def\k{k}
\def\n{n}
\def\kn{\del}
\def\fij{f^{(i,j)}}
\def\fbij{\fb^{(i,j)}}
\def\fii{f^{(i,i+1)}}
\def\DyckF(#1){\Dyck(\Fr(#1))}
\def\Pfinf{P^{(f)}_\infty}
\def\Pf{P^{(f)}}
\def\Pg{P^{(g)}}
\def\Pginf{P^{(g)}_\infty}
\def\pf_#1{p_{f,#1}}
\def\lab{j}
\def\qf_#1{q_{f,#1}}
\def\del{\delta}
\def\del{\delta}
\def\delp{\delta^\perp}
\let\slp\slope
\def\directref#1#2{\hyperref[#2]{\getrefnumber{#1}\itemreff{#2}}}
\def\PX#1{P^{(#1)}_\infty}
\def\cas#1{

 {\bf Case #1:}
}
\def\wsw{\preceq}
\def\sw{\prec}
\def\LX#1{L^{(#1)}}
\def\Del{\Delta}
\def\Hbf{H}
\def\H{H}
\def\h{h}
\def\lf{\lfloor}
\def\rf{\rfloor}
\def\Fbi{\tilde \Flet}
\def\gb{{\bar g}}
\def\Fmax{\Flet^{\maxx}_{k,n}}
\def\Fmin{\Flet^{\minn}_{k,n}}
\def\shf{\sigma f}
\def\difmod{\nu}
\def\difmodb{\bar\nu}
\def\epskn{\eps_{k,n}}
\def\a{{\mathbf{a}}}
\def\Cata{\Cat_{\a}}
\def\Cataf{\Cat_{\a(f)}}
\def\maxx{\operatorname{max}}
\def\Go{\operatorname{Deo}}
\def\Gom{\Go^{\maxx}}
\def\T{{\mathbb{T}}}
\def\Pb{\bar P}
\def\PHbf{P^{(\Hbf)}}
\def\PHbfinf{P^{(\Hbf)}_\infty}
\def\PHbfp{P^{(\Hbf')}}
\def\subfty{}
\def\rotn{$180^\circ$}
\def\Pb{\bar P}
\def\beps{\vec{\eps}}
\def\F{{\operatorname{\Flet}}'}
\def\Fbf{\tilde\Flet'(f)}
\def\Fb{\tilde\Flet'}
\def\Fr{{\operatorname{\Flet}}}
\def\tS{{\tilde S}}
\def\O{{\mathcal{O}}}
\def\Omin{\O_{{\rm min}}}
\def\knk{\gamma}
\def\Flet{\Gamma} %
\def\tSp{\tS}
\def\Ceqvt{c-equivalent\xspace}
\def\Ceqvce{c-equivalence\xspace}
\def\Ceqvces{c-equivalences\xspace}
\def\ceq{\stackrel{\scalebox{0.6}{\ \normalfont{c}}}{\sim}}
\def\capprox{\stackrel{\scalebox{0.6}{\ \normalfont{c}}}{\approx}}
\def\crossing{\scalebox{0.4}{\begin{tikzpicture}[baseline=(ZUZU.base)]
\coordinate(ZUZU) at (0,-0.5);\drawgrid{0}{0}{0}{0}\draw[line width=3pt, rounded corners=12] (0.00,-0.50)--(0.00,0.50);\draw[line width=3pt, rounded corners=12] (-0.50,0.00)--(0.50,0.00);
\end{tikzpicture}}\xspace} 
\def\elbow{\scalebox{0.4}{\begin{tikzpicture}[baseline=(ZUZU.base),xscale=-1]
\coordinate(ZUZU) at (0,-0.5);\drawgrid{0}{0}{0}{0}\draw[line width=3pt, rounded corners=12] (0.00,-0.50)--(0.00,0.00)--(0.50,0.00);\draw[line width=3pt, rounded corners=12] (-0.50,0.00)--(0.00,0.00)--(0.00,0.50);
\end{tikzpicture}}\xspace}
\def\repfree{repetition-free\xspace}
\def\Repfree{Repetition-free\xspace}
\def\tob{\to b}
\def\tod{\to d}
\def\Ra{R_\alpha}
\def\Rb{R_\beta}
\def\PZ{(\Del_1\cup\Del_2)_{\Z}}
\begin{document}
\numberwithin{equation}{section}
\title{Positroid Catalan numbers}
\author{Pavel Galashin}
\address{Department of Mathematics, University of California, Los Angeles, 520 Portola Plaza,
Los Angeles, CA 90025, USA}
\email{\href{mailto:galashin@math.ucla.edu}{galashin@math.ucla.edu}}

\author{Thomas Lam}
\address{Department of Mathematics, University of Michigan, 2074 East Hall, 530 Church Street, Ann Arbor, MI 48109-1043, USA}
\email{\href{mailto:tfylam@umich.edu}{tfylam@umich.edu}}
\thanks{P.G.\ was supported by an Alfred P. Sloan Research Fellowship and by the National Science Foundation under Grants No.~DMS-1954121 and No.~DMS-2046915. T.L.\ was supported by grants DMS-1464693 and DMS-1953852 from the National Science Foundation.}

\begin{abstract}
Given a permutation $f$, we study the \emph{positroid Catalan number} $C_f$ defined to be the torus-equivariant Euler characteristic of the associated open positroid variety. We introduce a class of \emph{\repfree permutations} and show that the corresponding positroid Catalan numbers count Dyck paths avoiding a convex subset of the rectangle. We show that any convex subset appears in this way.  Conjecturally, the associated $q,t$-polynomials coincide with the \emph{generalized $q,t$-Catalan numbers} that recently appeared in relation to the shuffle conjecture, flag Hilbert schemes, and Khovanov--Rozansky homology of Coxeter links.   %
\end{abstract}

\subjclass[2020]{
  Primary: 
    05A19. %
  Secondary:
  14M15, %
  15B48, %
  57K18. %
}
\keywords{Positroid varieties, $q,t$-Catalan numbers, Dyck paths, convexity, Khovanov--Rozansky homology.
}

\date{\today}

\maketitle

\setcounter{tocdepth}{1}

\vspace{-0.09in}

\tableofcontents

\maketitle

\section{Introduction}
\emph{Open positroid varieties} are remarkable subvarieties of the Grassmannian introduced by Knutson--Lam--Speyer in~\cite{KLS}, building on the work of Postnikov~\cite{Pos}.  They appear in numerous contexts: total positivity, Schubert calculus, Poisson geometry, scattering amplitudes, cluster algebras, and so on~\cite{LusIntro, BGY, abcgpt, positroidcluster}. 
In a recent paper~\cite{qtcat}, we further connected positroid varieties to knot invariants, showing a relation between the cohomology of an open positroid variety $\Pio_f$ and \emph{Khovanov--Rozansky homology}~\cite{KR1,KR2} of an associated \emph{positroid link} $\betah_f$.

\subsection{Positroid Catalan numbers}\label{sec:perm-affine-notat}
Let $\Snc$ denote the set of $n$-cycles in the symmetric group $S_n$.  To each $\perm\in\Snc$ we associate a \emph{bounded affine permutation} $f:\Z\to\Z$. The map $f$ is uniquely determined by the conditions $f(i+n)=f(i)+n$ and $i<f(i)<i+n$ for all $i\in\Z$, together with $f(i)\equiv\perm(i)\pmod n$ for all $1\leq i\leq n$. Taking $f$ modulo $n$ recovers $\perm$, and thus $f$ and $\perm$ determine each other.  See \cref{fig:aff_notn} for an example and \cref{sec:affine-permutations} for further details.

For a bounded affine permutation $f $, let $\Pio_f \subset \Gr(k,n)$ denote the corresponding \emph{open positroid variety} of the Grassmannian.  Let $T \subset \PGL(n)$ denote the natural torus of diagonal matrices acting on $\Gr(k,n)$.

\begin{definition}\label{defn:intro}
For an $n$-cycle $\perm \in \Snc$, define the \emph{positroid Catalan number} 
$$
\Cat_f := \chi_{T}(\Pio_f)
$$ 
to be the torus-equivariant Euler characteristic of $\Pio_f$. 
\end{definition}

 These numbers are positive integers which can be computed via an explicit combinatorial recurrence; see \cref{sec:recurs-positr-catal}. Additionally, they have the following interpretations:
\begin{enumerate}[label=(\alph*)]
\item $\Cat_f$ is equal to the number of \emph{maximal $f$-Deograms} introduced in~\cite{qtcat}, which are in bijection with a class of distinguished subexpressions in the sense of Deodhar \cite{Deodhar}; see \cref{sec:deograms}.
\item $\Cat_f$ is equal to the $q = 1$ evaluation of the polynomial $\Rt_f(q):=R_f(q)/(q-1)^{n-1}$, where $R_f(q)$ is the \emph{Kazhdan--Lusztig $R$-polynomial}~\cite{KL1,KL2}; see \cref{sec:affine_R_poly}. By \cite[Theorem~1.11]{qtcat}, $\Rt_f(q)$ may be obtained as a coefficient of the \emph{HOMFLY polynomial}~\cite{HOMFLY,PT} of $\betah_f$.
\item  $\Cat_f$ is equal to the $q=t=1$ evaluation of the \emph{mixed Hodge polynomial} $\Pcal(\Pio_f/T;q,t)$.  By \cite{qtcat}, ${\mathcal P}(\Pio_f/T;q,t)$ is equal to a coefficient of the Khovanov--Rozansky triply-graded link invariant of $\betah_f$.
\end{enumerate}

We showed in~\cite{qtcat}, using results on torus knots that date back to Jones \cite{Jones}, that for $\gcd(k,n)=1$ and $f$ given by $f(i) = i+k$,  %
the positroid Catalan number $\Cat_f$ recovers the famous (rational) Catalan number $\Cat_{k,n-k} := \frac{1}{n}\binom{n}{k}$ which counts Dyck paths above the diagonal inside a $k\times (n-k)$ rectangle.  This explains the nomenclature ``positroid Catalan number" and points towards a deeper investigation of positroid Catalan numbers from a combinatorial perspective.  
In this work, we make the first step in this direction.  

\begin{figure}

\setlength{\tabcolsep}{0pt}
\begin{tabular}{ccc}
\scalebox{0.60}{
\begin{tikzpicture}[xscale=0.50, yscale=0.50, baseline=(ZUZU.base)]
	\coordinate(ZUZU) at (0,1.50);
\node[draw,circle,scale=0.30,fill=black] (T0) at (0,3.00) {};
\node[anchor=south,scale=1.00] (LT0) at (0,3.00) {$0$};
\node[draw,circle,scale=0.30,fill=black] (T1) at (1,3.00) {};
\node[anchor=south,scale=1.00] (LT1) at (1,3.00) {$1$};
\node[draw,circle,scale=0.30,fill=black] (T2) at (2,3.00) {};
\node[anchor=south,scale=1.00] (LT2) at (2,3.00) {$2$};
\node[draw,circle,scale=0.30,fill=black] (T3) at (3,3.00) {};
\node[anchor=south,scale=1.00] (LT3) at (3,3.00) {$3$};
\node[draw,circle,scale=0.30,fill=black] (T4) at (4,3.00) {};
\node[anchor=south,scale=1.00] (LT4) at (4,3.00) {$4$};
\node[draw,circle,scale=0.30,fill=black] (T5) at (5,3.00) {};
\node[anchor=south,scale=1.00] (LT5) at (5,3.00) {$5$};
\node[draw,circle,scale=0.30,fill=black] (T6) at (6,3.00) {};
\node[anchor=south,scale=1.00] (LT6) at (6,3.00) {$6$};
\node[draw,circle,scale=0.30,fill=black] (B0) at (0,0.00) {};
\node[anchor=north,scale=1.00] (LT0) at (0,0.00) {$0$};
\node[draw,circle,scale=0.30,fill=black] (B1) at (1,0.00) {};
\node[anchor=north,scale=1.00] (LT1) at (1,0.00) {$1$};
\node[draw,circle,scale=0.30,fill=black] (B2) at (2,0.00) {};
\node[anchor=north,scale=1.00] (LT2) at (2,0.00) {$2$};
\node[draw,circle,scale=0.30,fill=black] (B3) at (3,0.00) {};
\node[anchor=north,scale=1.00] (LT3) at (3,0.00) {$3$};
\node[draw,circle,scale=0.30,fill=black] (B4) at (4,0.00) {};
\node[anchor=north,scale=1.00] (LT4) at (4,0.00) {$4$};
\node[draw,circle,scale=0.30,fill=black] (B5) at (5,0.00) {};
\node[anchor=north,scale=1.00] (LT5) at (5,0.00) {$5$};
\node[draw,circle,scale=0.30,fill=black] (B6) at (6,0.00) {};
\node[anchor=north,scale=1.00] (LT6) at (6,0.00) {$6$};
\draw[black, line width=0.50pt] (T0)--(B2);
\draw[black, line width=0.50pt] (T1)--(B0);
\draw[black, line width=0.50pt] (T2)--(B3);
\draw[black, line width=0.50pt] (T3)--(B6);
\draw[black, line width=0.50pt] (T4)--(B5);
\draw[black, line width=0.50pt] (T5)--(B1);
\draw[black, line width=0.50pt] (T6)--(B4);
\end{tikzpicture}

}
& \quad $\longrightarrow$ \quad &
\scalebox{0.60}{
\begin{tikzpicture}[xscale=0.50, yscale=0.50, baseline=(ZUZU.base)]
	\coordinate(ZUZU) at (0,1.50);
\node[draw,circle,scale=0.30,fill=black] (T0) at (0,3.00) {};
\node[anchor=south,scale=1.00] (LT0) at (0,3.00) {$0$};
\node[draw,circle,scale=0.30,fill=black] (T1) at (1,3.00) {};
\node[anchor=south,scale=1.00] (LT1) at (1,3.00) {$1$};
\node[draw,circle,scale=0.30,fill=black] (T2) at (2,3.00) {};
\node[anchor=south,scale=1.00] (LT2) at (2,3.00) {$2$};
\node[draw,circle,scale=0.30,fill=black] (T3) at (3,3.00) {};
\node[anchor=south,scale=1.00] (LT3) at (3,3.00) {$3$};
\node[draw,circle,scale=0.30,fill=black] (T4) at (4,3.00) {};
\node[anchor=south,scale=1.00] (LT4) at (4,3.00) {$4$};
\node[draw,circle,scale=0.30,fill=black] (T5) at (5,3.00) {};
\node[anchor=south,scale=1.00] (LT5) at (5,3.00) {$5$};
\node[draw,circle,scale=0.30,fill=black] (T6) at (6,3.00) {};
\node[anchor=south,scale=1.00] (LT6) at (6,3.00) {$6$};
\node[draw,circle,scale=0.30,fill=black] (T7) at (7,3.00) {};
\node[anchor=south,scale=1.00] (LT7) at (7,3.00) {$0$};
\node[draw,circle,scale=0.30,fill=black] (T8) at (8,3.00) {};
\node[anchor=south,scale=1.00] (LT8) at (8,3.00) {$1$};
\node[draw,circle,scale=0.30,fill=black] (T9) at (9,3.00) {};
\node[anchor=south,scale=1.00] (LT9) at (9,3.00) {$2$};
\node[draw,circle,scale=0.30,fill=black] (T10) at (10,3.00) {};
\node[anchor=south,scale=1.00] (LT10) at (10,3.00) {$3$};
\node[draw,circle,scale=0.30,fill=black] (T11) at (11,3.00) {};
\node[anchor=south,scale=1.00] (LT11) at (11,3.00) {$4$};
\node[draw,circle,scale=0.30,fill=black] (T12) at (12,3.00) {};
\node[anchor=south,scale=1.00] (LT12) at (12,3.00) {$5$};
\node[draw,circle,scale=0.30,fill=black] (T13) at (13,3.00) {};
\node[anchor=south,scale=1.00] (LT13) at (13,3.00) {$6$};
\node[draw,circle,scale=0.30,fill=black] (T14) at (14,3.00) {};
\node[anchor=south,scale=1.00] (LT14) at (14,3.00) {$0$};
\node[draw,circle,scale=0.30,fill=black] (T15) at (15,3.00) {};
\node[anchor=south,scale=1.00] (LT15) at (15,3.00) {$1$};
\node[draw,circle,scale=0.30,fill=black] (T16) at (16,3.00) {};
\node[anchor=south,scale=1.00] (LT16) at (16,3.00) {$2$};
\node[draw,circle,scale=0.30,fill=black] (T17) at (17,3.00) {};
\node[anchor=south,scale=1.00] (LT17) at (17,3.00) {$3$};
\node[draw,circle,scale=0.30,fill=black] (T18) at (18,3.00) {};
\node[anchor=south,scale=1.00] (LT18) at (18,3.00) {$4$};
\node[draw,circle,scale=0.30,fill=black] (B2) at (2,0.00) {};
\node[anchor=north,scale=1.00] (LT2) at (2,0.00) {$2$};
\node[draw,circle,scale=0.30,fill=black] (B3) at (3,0.00) {};
\node[anchor=north,scale=1.00] (LT3) at (3,0.00) {$3$};
\node[draw,circle,scale=0.30,fill=black] (B4) at (4,0.00) {};
\node[anchor=north,scale=1.00] (LT4) at (4,0.00) {$4$};
\node[draw,circle,scale=0.30,fill=black] (B5) at (5,0.00) {};
\node[anchor=north,scale=1.00] (LT5) at (5,0.00) {$5$};
\node[draw,circle,scale=0.30,fill=black] (B6) at (6,0.00) {};
\node[anchor=north,scale=1.00] (LT6) at (6,0.00) {$6$};
\node[draw,circle,scale=0.30,fill=black] (B7) at (7,0.00) {};
\node[anchor=north,scale=1.00] (LT7) at (7,0.00) {$0$};
\node[draw,circle,scale=0.30,fill=black] (B8) at (8,0.00) {};
\node[anchor=north,scale=1.00] (LT8) at (8,0.00) {$1$};
\node[draw,circle,scale=0.30,fill=black] (B9) at (9,0.00) {};
\node[anchor=north,scale=1.00] (LT9) at (9,0.00) {$2$};
\node[draw,circle,scale=0.30,fill=black] (B10) at (10,0.00) {};
\node[anchor=north,scale=1.00] (LT10) at (10,0.00) {$3$};
\node[draw,circle,scale=0.30,fill=black] (B11) at (11,0.00) {};
\node[anchor=north,scale=1.00] (LT11) at (11,0.00) {$4$};
\node[draw,circle,scale=0.30,fill=black] (B12) at (12,0.00) {};
\node[anchor=north,scale=1.00] (LT12) at (12,0.00) {$5$};
\node[draw,circle,scale=0.30,fill=black] (B13) at (13,0.00) {};
\node[anchor=north,scale=1.00] (LT13) at (13,0.00) {$6$};
\node[draw,circle,scale=0.30,fill=black] (B14) at (14,0.00) {};
\node[anchor=north,scale=1.00] (LT14) at (14,0.00) {$0$};
\node[draw,circle,scale=0.30,fill=black] (B15) at (15,0.00) {};
\node[anchor=north,scale=1.00] (LT15) at (15,0.00) {$1$};
\node[draw,circle,scale=0.30,fill=black] (B16) at (16,0.00) {};
\node[anchor=north,scale=1.00] (LT16) at (16,0.00) {$2$};
\node[draw,circle,scale=0.30,fill=black] (B17) at (17,0.00) {};
\node[anchor=north,scale=1.00] (LT17) at (17,0.00) {$3$};
\node[draw,circle,scale=0.30,fill=black] (B18) at (18,0.00) {};
\node[anchor=north,scale=1.00] (LT18) at (18,0.00) {$4$};
\node[draw,circle,scale=0.30,fill=black] (B19) at (19,0.00) {};
\node[anchor=north,scale=1.00] (LT19) at (19,0.00) {$5$};
\node[draw,circle,scale=0.30,fill=black] (B20) at (20,0.00) {};
\node[anchor=north,scale=1.00] (LT20) at (20,0.00) {$6$};
\draw[black, line width=0.50pt] (T0)--(B2);
\draw[black, line width=0.50pt] (T1)--(B7);
\draw[black, line width=0.50pt] (T2)--(B3);
\draw[black, line width=0.50pt] (T3)--(B6);
\draw[black, line width=0.50pt] (T4)--(B5);
\draw[black, line width=0.50pt] (T5)--(B8);
\draw[black, line width=0.50pt] (T6)--(B11);
\draw[black, line width=0.50pt] (T7)--(B9);
\draw[black, line width=0.50pt] (T8)--(B14);
\draw[black, line width=0.50pt] (T9)--(B10);
\draw[black, line width=0.50pt] (T10)--(B13);
\draw[black, line width=0.50pt] (T11)--(B12);
\draw[black, line width=0.50pt] (T12)--(B15);
\draw[black, line width=0.50pt] (T13)--(B18);
\draw[black, line width=0.50pt] (T14)--(B16);
\draw[black, line width=0.50pt] (T16)--(B17);
\draw[black, line width=0.50pt] (T17)--(B20);
\draw[black, line width=0.50pt] (T18)--(B19);
\node[scale=1.50, anchor=west] (DDDTR) at (18.30, 3.00) {$\dots$};
\node[scale=1.50, anchor=east] (DDDTR) at (-0.30, 3.00) {$\dots$};
\node[scale=1.50, anchor=west] (DDDTR) at (20.30, 0.00) {$\dots$};
\node[scale=1.50, anchor=east] (DDDTR) at (1.70, 0.00) {$\dots$};
\end{tikzpicture}

}
\\ \vspace{-0.1in} \\
permutation $\fb\in\Snc$ & & bounded affine permutation $f\in\Bknc$
\end{tabular}
  \caption{\label{fig:aff_notn} Drawing an $n$-cycle (left) in affine notation (right).}
\end{figure}
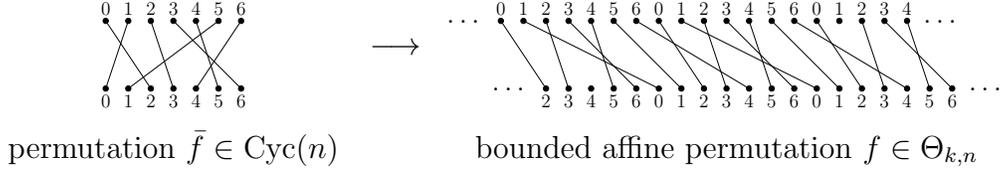

\subsection{\Repfree permutations}\label{sec:intro:rep-free}  To each $n$-cycle $\perm\in\Snc$ we associate an \emph{inversion multiset} $\Fr(f)$, and we introduce a natural class of \emph{\repfree} permutations for which the multiset $\Fr(f)$ has no repeated elements.
Let $[n]:=\{1,2,\dots,n\}$ and for $\perm\in\Snc$, set %
\begin{equation}\label{eq:intro:wind_dfn}
  \k(\perm):=\#\{i\in[n]\mid \perm(i)<i\}.
\end{equation}
For $1\leq k\leq n-1$, we denote
\begin{equation*}%
  \Bknc:=\{f\mid \perm\in \Snc\text{ is such that $\k(\perm)=k$}\}.
\end{equation*}
The set $\Snc$ is in bijection with $\bigsqcup_{k=1}^{n-1}\Bknc$. %

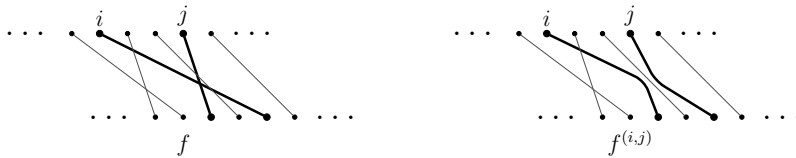
\begin{figure}

  \resizebox{0.7\textwidth}{!}{
\def\rc{5}
\begin{tabular}{ccc}
\begin{tikzpicture}[xscale=0.50, yscale=0.50, baseline=(ZUZU.base)]
	\coordinate(ZUZU) at (0,1.50);
\node[draw,circle,scale=0.20,fill=black] (T0) at (0,3.00) {};
\node[draw,circle,scale=0.3,fill=black] (T1) at (1,3.00) {};
\node[anchor=south,scale=1.00] (LT1) at (1,3.00) {$i$};
\node[draw,circle,scale=0.20,fill=black] (T2) at (2,3.00) {};
\node[draw,circle,scale=0.20,fill=black] (T3) at (3,3.00) {};
\node[draw,circle,scale=0.3,fill=black] (T4) at (4,3.00) {};
\node[anchor=south,scale=1.00] (LT4) at (4,3.00) {$j$};
\node[draw,circle,scale=0.20,fill=black] (T5) at (5,3.00) {};
\node[draw,circle,scale=0.20,fill=black] (B3) at (3,0.00) {};
\node[draw,circle,scale=0.20,fill=black] (B4) at (4,0.00) {};
\node[draw,circle,scale=0.3,fill=black] (B5) at (5,0.00) {};
\node[draw,circle,scale=0.20,fill=black] (B6) at (6,0.00) {};
\node[draw,circle,scale=0.3,fill=black] (B7) at (7,0.00) {};
\node[draw,circle,scale=0.20,fill=black] (B8) at (8,0.00) {};
\draw[line width=0.3pt, black!70] (T0)--(B4);
\draw[line width=1.3pt] (T1)--(B7);
\draw[line width=1.3pt] (T4)--(B5);
\draw[line width=0.3pt, black!70] (T2)--(B3);
\draw[line width=0.3pt, black!70] (T3)--(B6);
\draw[line width=0.3pt, black!70] (T5)--(B8);
\node[scale=1.50, anchor=west] (DDDTR) at (5.30, 3.00) {$\dots$};
\node[scale=1.50, anchor=east] (DDDTR) at (-0.30, 3.00) {$\dots$};
\node[scale=1.50, anchor=west] (DDDTR) at (8.30, 0.00) {$\dots$};
\node[scale=1.50, anchor=east] (DDDTR) at (2.70, 0.00) {$\dots$};
\end{tikzpicture}

&\hspace{0.2in} &
\begin{tikzpicture}[xscale=0.50, yscale=0.50, baseline=(ZUZU.base)]
	\coordinate(ZUZU) at (0,1.50);
\node[draw,circle,scale=0.20,fill=black] (T0) at (0,3.00) {};
\node[draw,circle,scale=0.3,fill=black] (T1) at (1,3.00) {};
\node[anchor=south,scale=1.00] (LT1) at (1,3.00) {$i$};
\node[draw,circle,scale=0.20,fill=black] (T2) at (2,3.00) {};
\node[draw,circle,scale=0.20,fill=black] (T3) at (3,3.00) {};
\node[draw,circle,scale=0.3,fill=black] (T4) at (4,3.00) {};
\node[anchor=south,scale=1.00] (LT4) at (4,3.00) {$j$};
\node[draw,circle,scale=0.20,fill=black] (T5) at (5,3.00) {};
\node[draw,circle,scale=0.20,fill=black] (B3) at (3,0.00) {};
\node[draw,circle,scale=0.20,fill=black] (B4) at (4,0.00) {};
\node[draw,circle,scale=0.3,fill=black] (B5) at (5,0.00) {};
\node[draw,circle,scale=0.20,fill=black] (B6) at (6,0.00) {};
\node[draw,circle,scale=0.3,fill=black] (B7) at (7,0.00) {};
\node[draw,circle,scale=0.20,fill=black] (B8) at (8,0.00) {};
\draw[line width=0.3pt, black!70] (T0)--(B4);
\draw[line width=1.3pt,rounded corners=\rc] (T1)--(4.5,1.3)--(B5);
\draw[line width=1.3pt,rounded corners=\rc] (T4)--(4.9,1.3)--(B7);
\draw[line width=0.3pt, black!70] (T2)--(B3);
\draw[line width=0.3pt, black!70] (T3)--(B6);
\draw[line width=0.3pt, black!70] (T5)--(B8);
\node[scale=1.50, anchor=west] (DDDTR) at (5.30, 3.00) {$\dots$};
\node[scale=1.50, anchor=east] (DDDTR) at (-0.30, 3.00) {$\dots$};
\node[scale=1.50, anchor=west] (DDDTR) at (8.30, 0.00) {$\dots$};
\node[scale=1.50, anchor=east] (DDDTR) at (2.70, 0.00) {$\dots$};
\end{tikzpicture}
 \\
$f$ && $\fij$
\end{tabular}

}
  \caption{\label{fig:resolve} Resolving a crossing $(i,j)$ of $f$.}
\end{figure}
For $f\in\Bknc$, let %
\begin{equation*}%
  \k(f)=\k(\fb):=k,\quad \n(f)=\n(\fb):=n, \quad\text{and}\quad \knk(f)=\knk(\fb):=(k,n-k).
\end{equation*}
An \emph{inversion} of $f\in\Bknc$ is a pair $(i,j)$ of integers such that $i<j$, $f(i)>f(j)$, and $i\in[n]$. 
The \emph{length} $\ell(f)$ is the number of inversions of $f$. 
For an inversion $(i,j)$ of $f$, let $\fij:\Z\to\Z$ be obtained by swapping the values $f(i)$ and $f(j)$ (and repeating this for $f(i+rn)$ and $f(j+rn)$ for all $r\in\Z$).  We say that $\fij$ is obtained from $f$ by \emph{resolving the crossing $(i,j)$}; see \cref{fig:resolve}. We let $\fbij\in S_n$ denote the permutation obtained by reducing $\fij$ modulo~$n$.

\begin{figure}

\resizebox{\textwidth}{!}{
\setlength{\tabcolsep}{-10pt}
\def\rc{5}
\begin{tabular}{ccccc}
\scalebox{0.60}{
\begin{tikzpicture}[xscale=0.50, yscale=0.50, baseline=(ZUZU.base)]
	\coordinate(ZUZU) at (0,1.50);
\node[draw,circle,scale=0.30,fill=black] (T0) at (0,3.00) {};
\node[anchor=south,scale=1.00] (LT0) at (0,3.00) {$0$};
\node[draw,circle,scale=0.30,fill=black] (T1) at (1,3.00) {};
\node[anchor=south,scale=1.00] (LT1) at (1,3.00) {$1$};
\node[draw,circle,scale=0.30,fill=black] (T2) at (2,3.00) {};
\node[anchor=south,scale=1.00] (LT2) at (2,3.00) {$2$};
\node[draw,circle,scale=0.30,fill=black] (T3) at (3,3.00) {};
\node[anchor=south,scale=1.00] (LT3) at (3,3.00) {$3$};
\node[draw,circle,scale=0.30,fill=black] (T4) at (4,3.00) {};
\node[anchor=south,scale=1.00] (LT4) at (4,3.00) {$4$};
\node[draw,circle,scale=0.30,fill=black] (T5) at (5,3.00) {};
\node[anchor=south,scale=1.00] (LT5) at (5,3.00) {$5$};
\node[draw,circle,scale=0.30,fill=black] (T6) at (6,3.00) {};
\node[anchor=south,scale=1.00] (LT6) at (6,3.00) {$6$};
\node[draw,circle,scale=0.30,fill=black] (T7) at (7,3.00) {};
\node[anchor=south,scale=1.00] (LT7) at (7,3.00) {$0$};
\node[draw,circle,scale=0.30,fill=black] (T8) at (8,3.00) {};
\node[anchor=south,scale=1.00] (LT8) at (8,3.00) {$1$};
\node[draw,circle,scale=0.30,fill=black] (T9) at (9,3.00) {};
\node[anchor=south,scale=1.00] (LT9) at (9,3.00) {$2$};
\node[draw,circle,scale=0.30,fill=black] (T10) at (10,3.00) {};
\node[anchor=south,scale=1.00] (LT10) at (10,3.00) {$3$};
\node[draw,circle,scale=0.30,fill=black] (B3) at (3,0.00) {};
\node[anchor=north,scale=1.00] (LT3) at (3,0.00) {$3$};
\node[draw,circle,scale=0.30,fill=black] (B4) at (4,0.00) {};
\node[anchor=north,scale=1.00] (LT4) at (4,0.00) {$4$};
\node[draw,circle,scale=0.30,fill=black] (B5) at (5,0.00) {};
\node[anchor=north,scale=1.00] (LT5) at (5,0.00) {$5$};
\node[draw,circle,scale=0.30,fill=black] (B6) at (6,0.00) {};
\node[anchor=north,scale=1.00] (LT6) at (6,0.00) {$6$};
\node[draw,circle,scale=0.30,fill=black] (B7) at (7,0.00) {};
\node[anchor=north,scale=1.00] (LT7) at (7,0.00) {$0$};
\node[draw,circle,scale=0.30,fill=black] (B8) at (8,0.00) {};
\node[anchor=north,scale=1.00] (LT8) at (8,0.00) {$1$};
\node[draw,circle,scale=0.30,fill=black] (B9) at (9,0.00) {};
\node[anchor=north,scale=1.00] (LT9) at (9,0.00) {$2$};
\node[draw,circle,scale=0.30,fill=black] (B10) at (10,0.00) {};
\node[anchor=north,scale=1.00] (LT10) at (10,0.00) {$3$};
\node[draw,circle,scale=0.30,fill=black] (B11) at (11,0.00) {};
\node[anchor=north,scale=1.00] (LT11) at (11,0.00) {$4$};
\node[draw,circle,scale=0.30,fill=black] (B12) at (12,0.00) {};
\node[anchor=north,scale=1.00] (LT12) at (12,0.00) {$5$};
\node[draw,circle,scale=0.30,fill=black] (B13) at (13,0.00) {};
\node[anchor=north,scale=1.00] (LT13) at (13,0.00) {$6$};
\draw[black, line width=0.50pt] (T0)--(B3);
\draw[black, line width=0.50pt] (T1)--(B6);
\draw[black, line width=0.50pt] (T2)--(B4);
\draw[black, line width=0.50pt] (T3)--(B5);
\draw[black, line width=0.50pt] (T4)--(B7);
\draw[black, line width=0.50pt] (T5)--(B8);
\draw[black, line width=0.50pt] (T6)--(B9);
\draw[black, line width=0.50pt] (T7)--(B10);
\draw[black, line width=0.50pt] (T8)--(B13);
\draw[black, line width=0.50pt] (T9)--(B11);
\draw[black, line width=0.50pt] (T10)--(B12);
\node[scale=1.50, anchor=west] (DDDTR) at (10.30, 3.00) {$\dots$};
\node[scale=1.50, anchor=east] (DDDTR) at (-0.30, 3.00) {$\dots$};
\node[scale=1.50, anchor=west] (DDDTR) at (13.30, 0.00) {$\dots$};
\node[scale=1.50, anchor=east] (DDDTR) at (2.70, 0.00) {$\dots$};
\end{tikzpicture}

}
&
\scalebox{0.60}{
\begin{tikzpicture}[xscale=0.50, yscale=0.50, baseline=(ZUZU.base)]
	\coordinate(ZUZU) at (0,1.50);
\node[draw=red,circle,scale=0.2, fill=red] (T0) at (0,3.00) {};
\node[anchor=south,red,scale=1.00] (LT0) at (0,3.00) {$0$};
\node[draw=red,circle,scale=0.3,fill=red] (T1) at (1,3.00) {};
\node[anchor=south,red,scale=1.00] (LT1) at (1,3.00) {$1$};
\node[draw=blue,circle,scale=0.3,fill=blue!60] (T2) at (2,3.00) {};
\node[anchor=south,blue!60,scale=1.00] (LT2) at (2,3.00) {$2$};
\node[draw=red,circle,scale=0.2, fill=red] (T3) at (3,3.00) {};
\node[anchor=south,red,scale=1.00] (LT3) at (3,3.00) {$3$};
\node[draw=red,circle,scale=0.2, fill=red] (T4) at (4,3.00) {};
\node[anchor=south,red,scale=1.00] (LT4) at (4,3.00) {$4$};
\node[draw=red,circle,scale=0.2, fill=red] (T5) at (5,3.00) {};
\node[anchor=south,red,scale=1.00] (LT5) at (5,3.00) {$5$};
\node[draw=blue,circle,scale=0.2, fill=blue!60] (T6) at (6,3.00) {};
\node[anchor=south,blue!60,scale=1.00] (LT6) at (6,3.00) {$6$};
\node[draw=red,circle,scale=0.2, fill=red] (T7) at (7,3.00) {};
\node[anchor=south,red,scale=1.00] (LT7) at (7,3.00) {$0$};
\node[draw=red,circle,scale=0.3,fill=red] (T8) at (8,3.00) {};
\node[anchor=south,red,scale=1.00] (LT8) at (8,3.00) {$1$};
\node[draw=blue,circle,scale=0.3,fill=blue!60] (T9) at (9,3.00) {};
\node[anchor=south,blue!60,scale=1.00] (LT9) at (9,3.00) {$2$};
\node[draw=red,circle,scale=0.2, fill=red] (T10) at (10,3.00) {};
\node[anchor=south,red,scale=1.00] (LT10) at (10,3.00) {$3$};
\node[draw=red,circle,scale=0.2, fill=red] (B3) at (3,0.00) {};
\node[anchor=north,red,scale=1.00] (LT3) at (3,0.00) {$3$};
\node[draw=red,circle,scale=0.3,fill=red] (B4) at (4,0.00) {};
\node[anchor=north,red,scale=1.00] (LT4) at (4,0.00) {$4$};
\node[draw=red,circle,scale=0.2, fill=red] (B5) at (5,0.00) {};
\node[anchor=north,red,scale=1.00] (LT5) at (5,0.00) {$5$};
\node[draw=blue,circle,scale=0.3,fill=blue!60] (B6) at (6,0.00) {};
\node[anchor=north,blue!60,scale=1.00] (LT6) at (6,0.00) {$6$};
\node[draw=red,circle,scale=0.2, fill=red] (B7) at (7,0.00) {};
\node[anchor=north,red,scale=1.00] (LT7) at (7,0.00) {$0$};
\node[draw=red,circle,scale=0.2, fill=red] (B8) at (8,0.00) {};
\node[anchor=north,red,scale=1.00] (LT8) at (8,0.00) {$1$};
\node[draw=blue,circle,scale=0.2, fill=blue!60] (B9) at (9,0.00) {};
\node[anchor=north,blue!60,scale=1.00] (LT9) at (9,0.00) {$2$};
\node[draw=red,circle,scale=0.2, fill=red] (B10) at (10,0.00) {};
\node[anchor=north,red,scale=1.00] (LT10) at (10,0.00) {$3$};
\node[draw=red,circle,scale=0.3,fill=red] (B11) at (11,0.00) {};
\node[anchor=north,red,scale=1.00] (LT11) at (11,0.00) {$4$};
\node[draw=red,circle,scale=0.2, fill=red] (B12) at (12,0.00) {};
\node[anchor=north,red,scale=1.00] (LT12) at (12,0.00) {$5$};
\node[draw=blue,circle,scale=0.3,fill=blue!60] (B13) at (13,0.00) {};
\node[anchor=north,blue!60,scale=1.00] (LT13) at (13,0.00) {$6$};
\draw[line width=0.5pt, red] (T0)--(B3);
\draw[line width=1.3pt,red,rounded corners=\rc] (T1)--(2.5,2)--(B4);
\draw[line width=1.3pt,blue!60,rounded corners=\rc] (T2)--(2.7,2)--(B6);
\draw[line width=0.5pt, red] (T3)--(B5);
\draw[line width=0.5pt, red] (T4)--(B7);
\draw[line width=0.5pt, red] (T5)--(B8);
\draw[line width=0.5pt, blue!60] (T6)--(B9);
\draw[line width=0.5pt, red] (T7)--(B10);
\draw[line width=1.3pt,red,rounded corners=\rc] (T8)--(9.5,2)--(B11);
\draw[line width=1.3pt,blue!60,rounded corners=\rc] (T9)--(9.7,2)--(B13);
\draw[line width=0.5pt, red] (T10)--(B12);
\node[scale=1.50, anchor=west] (DDDTR) at (10.30, 3.00) {$\dots$};
\node[scale=1.50, anchor=east] (DDDTR) at (-0.30, 3.00) {$\dots$};
\node[scale=1.50, anchor=west] (DDDTR) at (13.30, 0.00) {$\dots$};
\node[scale=1.50, anchor=east] (DDDTR) at (2.70, 0.00) {$\dots$};
\end{tikzpicture}

}
&
\scalebox{0.60}{
\begin{tikzpicture}[xscale=0.50, yscale=0.50, baseline=(ZUZU.base)]
	\coordinate(ZUZU) at (0,1.50);
\node[draw=blue,circle,scale=0.2, fill=blue!60] (T0) at (0,3.00) {};
\node[anchor=south,blue,scale=1.00] (LT0) at (0,3.00) {$0$};
\node[draw=red,circle,scale=0.3,fill=red] (T1) at (1,3.00) {};
\node[anchor=south,red,scale=1.00] (LT1) at (1,3.00) {$1$};
\node[draw=blue,circle,scale=0.2, fill=blue!60] (T2) at (2,3.00) {};
\node[anchor=south,blue,scale=1.00] (LT2) at (2,3.00) {$2$};
\node[draw=blue,circle,scale=0.3,fill=blue!60] (T3) at (3,3.00) {};
\node[anchor=south,blue,scale=1.00] (LT3) at (3,3.00) {$3$};
\node[draw=blue,circle,scale=0.2, fill=blue!60] (T4) at (4,3.00) {};
\node[anchor=south,blue,scale=1.00] (LT4) at (4,3.00) {$4$};
\node[draw=red,circle,scale=0.2, fill=red] (T5) at (5,3.00) {};
\node[anchor=south,red,scale=1.00] (LT5) at (5,3.00) {$5$};
\node[draw=blue,circle,scale=0.2, fill=blue!60] (T6) at (6,3.00) {};
\node[anchor=south,blue,scale=1.00] (LT6) at (6,3.00) {$6$};
\node[draw=blue,circle,scale=0.2, fill=blue!60] (T7) at (7,3.00) {};
\node[anchor=south,blue,scale=1.00] (LT7) at (7,3.00) {$0$};
\node[draw=red,circle,scale=0.3,fill=red] (T8) at (8,3.00) {};
\node[anchor=south,red,scale=1.00] (LT8) at (8,3.00) {$1$};
\node[draw=blue,circle,scale=0.2, fill=blue!60] (T9) at (9,3.00) {};
\node[anchor=south,blue,scale=1.00] (LT9) at (9,3.00) {$2$};
\node[draw=blue,circle,scale=0.3,fill=blue!60] (T10) at (10,3.00) {};
\node[anchor=south,blue,scale=1.00] (LT10) at (10,3.00) {$3$};
\node[draw=blue,circle,scale=0.2, fill=blue!60] (B3) at (3,0.00) {};
\node[anchor=north,blue,scale=1.00] (LT3) at (3,0.00) {$3$};
\node[draw=blue,circle,scale=0.2, fill=blue!60] (B4) at (4,0.00) {};
\node[anchor=north,blue,scale=1.00] (LT4) at (4,0.00) {$4$};
\node[draw=red,circle,scale=0.3,fill=red] (B5) at (5,0.00) {};
\node[anchor=north,red,scale=1.00] (LT5) at (5,0.00) {$5$};
\node[draw=blue,circle,scale=0.3,fill=blue!60] (B6) at (6,0.00) {};
\node[anchor=north,blue,scale=1.00] (LT6) at (6,0.00) {$6$};
\node[draw=blue,circle,scale=0.2, fill=blue!60] (B7) at (7,0.00) {};
\node[anchor=north,blue,scale=1.00] (LT7) at (7,0.00) {$0$};
\node[draw=red,circle,scale=0.2, fill=red] (B8) at (8,0.00) {};
\node[anchor=north,red,scale=1.00] (LT8) at (8,0.00) {$1$};
\node[draw=blue,circle,scale=0.2, fill=blue!60] (B9) at (9,0.00) {};
\node[anchor=north,blue,scale=1.00] (LT9) at (9,0.00) {$2$};
\node[draw=blue,circle,scale=0.2, fill=blue!60] (B10) at (10,0.00) {};
\node[anchor=north,blue,scale=1.00] (LT10) at (10,0.00) {$3$};
\node[draw=blue,circle,scale=0.2, fill=blue!60] (B11) at (11,0.00) {};
\node[anchor=north,blue,scale=1.00] (LT11) at (11,0.00) {$4$};
\node[draw=red,circle,scale=0.3,fill=red] (B12) at (12,0.00) {};
\node[anchor=north,red,scale=1.00] (LT12) at (12,0.00) {$5$};
\node[draw=blue,circle,scale=0.3,fill=blue!60] (B13) at (13,0.00) {};
\node[anchor=north,blue,scale=1.00] (LT13) at (13,0.00) {$6$};
\draw[line width=0.5pt, blue!60] (T0)--(B3);
\draw[line width=1.3pt,red,rounded corners=\rc] (T1)--(4.4,1)--(B5);
\draw[line width=1.3pt,blue!60,rounded corners=\rc] (T3)--(4.6,1)--(B6);
\draw[line width=0.5pt, blue!60] (T2)--(B4);
\draw[line width=0.5pt, blue!60] (T4)--(B7);
\draw[line width=0.5pt, red] (T5)--(B8);
\draw[line width=0.5pt, blue!60] (T6)--(B9);
\draw[line width=0.5pt, blue!60] (T7)--(B10);
\draw[line width=0.5pt, blue!60] (T9)--(B11);
\draw[line width=1.3pt,red,rounded corners=\rc] (T8)--(11.4,1)--(B12);
\draw[line width=1.3pt,blue!60,rounded corners=\rc] (T10)--(11.6,1)--(B13);
\node[scale=1.50, anchor=west] (DDDTR) at (10.30, 3.00) {$\dots$};
\node[scale=1.50, anchor=east] (DDDTR) at (-0.30, 3.00) {$\dots$};
\node[scale=1.50, anchor=west] (DDDTR) at (13.30, 0.00) {$\dots$};
\node[scale=1.50, anchor=east] (DDDTR) at (2.70, 0.00) {$\dots$};
\end{tikzpicture}

}
& \hspace{0.4in} &
\scalebox{0.70}{
\begin{tikzpicture}[xscale=0.70, yscale=0.70,baseline=(ZUZU.base)]
\coordinate(ZUZU) at (0,1.50);
\draw[black!60,line width=0.3] (0,0) grid (4, 3);
\draw[black!70,line width=0.2] (0,0) -- (4, 3);
\node[draw=blue, line width=1pt, circle, scale=0.30] (A12) at (1, 1) {};
\node[draw=blue, line width=1pt, circle, scale=0.30] (A25) at (3, 2) {};
\node[anchor=east,blue] (K) at (0.00,1.50) {$3$};
\node[anchor=south,blue] (N) at (2.00,3.00) {$4$};
\end{tikzpicture}

}
\\
$\knk(f)=(3,4)$ & $\knk(\textcolor{red}{f_1^{(1,2)}})=(2,3)$ & $\knk(\textcolor{red}{f_1^{(1,3)}})=(1,1)$ & & $\Fr(f)=\{(1,1),(2,3)\}$
\end{tabular}
}

  \caption{Computing $\Fr(f)$ for $f\in\Bxc_{3,7}$.
} \label{fig:kn_def} 
\end{figure}
The permutation $\fbij\in S_n$ is a product of two cycles, say, $\fbij=(a_1a_2\cdots a_{n_1})(b_1b_2\cdots b_{n_2})$, where $a_1\equiv i$ and $b_1\equiv j$ modulo $n$. By taking order-preserving bijections $\{a_1,a_2,\dots,a_{n_1}\}\to[n_1]$ and $\{b_1,b_2,\dots,b_{n_2}\}\to[n_2]$, we may naturally view each of the two cycles as permutations $\fbij_1\in \Sc_{n_1}$ and $\fbij_2\in \Sc_{n_2}$. We thus have $\fij_1\in\Bxc_{k_1,n_1}$ and $\fij_2\in\Bxc_{k_2,n_2}$, where $k_1:=\k(\fbij_1)$ and $k_2:=\k(\fbij_2)$.  

\begin{definition}\label{dfn:F(f)}
For $f\in\Bknc$, the \emph{inversion multiset} $\Fr(f)$ contains a point $\knk(\fij_1)$ for each inversion $(i,j)$ of $f$. We say that $f$ is \emph{\repfree} if $\Fr(f)$ is actually a set, that is, if it contains exactly $\ell(f)$ distinct points.
\end{definition}
\noindent See \cref{fig:kn_def} for an example. When we draw the set $\Fr(f)$ inside a $k\times(n-k)$ rectangle, we swap the horizontal and vertical coordinates; cf. \cref{notn:swap}.

We have $\knk(\fij_1)+\knk(\fij_2)=(k,n-k)$, but note that we only include $\knk(\fij_1)$ in $\Fr(f)$ for each inversion $(i,j)$. Nevertheless, $\Fr(f)$ is always \emph{centrally symmetric}, that is, invariant under the map $\knk\mapsto (k,n-k)-\knk$; see \cref{cor:cs}.

\subsection{Main result}\label{sec:main-result}
For a set $\Flet\subset[k-1]\times[n-k-1]$, we let $\Fbi:=\Flet\sqcup\{(0,0),(k,n-k)\}$. We say that $\Flet$ is \emph{convex} if $\Fbi$ contains all lattice points of its convex hull. 
For $f\in \Bknc$ such that $\Fr(f)$ is convex, let $\DyckF(f)$ denote the set of lattice paths from $(0,0)$ to $(k,n-k)$ with up/right unit steps which stay above the main diagonal and avoid $\Fr(f)$. 
 For each $f\in\Bknc$, $\Fr(f)$ contains all lattice points $(k_1,n_1-k_1)\in [k-1]\times[n-k-1]$ that satisfy $k_1/n_1=k/n$ (\cref{slope:=}). Thus the paths in $\DyckF(f)$ always avoid the main diagonal.  %

\noindent
\begin{minipage}{\textwidth}
\begin{theorem}\ \label{thm:main}
  \begin{theoremlist}
  \item\label{thm:main:convex_counting} If $f\in\Bknc$ is \repfree then $\Fr(f)$ is centrally symmetric and convex, and 
\begin{equation}\label{eq:counting}
  \Cat_f=\#\DyckF(f).
\end{equation}
  \item\label{thm:main:existence} For any centrally symmetric convex subset $\Flet\subset[k-1]\times[n-k-1]$, there exists a \repfree $f\in\Bknc$ satisfying $\Fr(f)=\Flet$. 
  \end{theoremlist}
\end{theorem}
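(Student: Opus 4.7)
The plan is to prove the two parts by separate inductive arguments.

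For part (i), I would proceed by induction on the length $\ell(f)$. The base case is the minimum-length cycle in $\Bknc$, where $\Fr(f)$ equals the forced diagonal lattice points of \cref{slope:=}, both sides of \eqref{eq:counting} equal $1$, and convexity and central symmetry are immediate.

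For the inductive step, I would pick a ``corner'' inversion $(i,j)$ of $f$ whose associated point $\knk(\fij_1)$ is an extreme point of the convex hull of $\Fr(f)$. The combinatorial recurrence for $\Cat_f$ from \cref{sec:recurs-positr-catal} expresses $\Cat_f$ in terms of smaller positroid Catalan numbers attached to the factors $\fij_1$ and $\fij_2$ (or to their resolutions). By the inductive hypothesis, each such smaller Catalan number equals the count of Dyck paths in a smaller rectangle avoiding the inherited inversion set. On the combinatorial side, a path in $\DyckF(f)$ should split canonically at the corner point $\knk(\fij_1)$ into two subpaths that live in the two smaller rectangles and avoid the respective inversion sets; matching these decompositions term-by-term will yield \eqref{eq:counting}. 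Central symmetry of $\Fr(f)$ is almost built in: for each inversion $(i,j)$ contributing $\knk(\fij_1)$, a partner inversion of $f$ must contribute $\knk(\fij_2) = (k,n-k) - \knk(\fij_1)$, so the two points appear together.

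For part (ii), I would argue by induction on $|\Flet|$. Start from the minimal centrally symmetric convex set (only the forced diagonal points) with its canonical \repfree witness, then successively adjoin pairs of centrally symmetric corner points by applying local moves to $f$---typically inserting a transposition at carefully chosen positions---each designed to create exactly one new inversion and its centrally symmetric partner without colliding with existing ones. Because every centrally symmetric convex $\Flet$ can be built from the minimal set by adding one symmetric pair of corner points at a time (each preserving convexity), the induction will cover all cases.

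The main obstacle is the convexity claim in part (i): while central symmetry is essentially automatic from the factor-swap, convexity requires showing that \emph{every} lattice point in the convex hull of $\Fr(f)$ is attained by some inversion, which demands a careful local analysis of how $\knk(\fij_1)$ varies across the inversions of a single \repfree permutation. This analysis simultaneously guides which local moves in part (ii) produce prescribed new points in $\Flet$ without introducing duplicates, so both halves of the theorem rest on the same combinatorial picture.
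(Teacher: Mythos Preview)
Your outline has the right high-level shape but contains several genuine gaps.

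\textbf{Base case.} Your claim that at minimum length ``both sides of~\eqref{eq:counting} equal $1$'' is false. When $\gcd(k,n)=1$ the minimum-length element is $f_{k,n}$ with $\Fr(f_{k,n})=\emptyset$, and both sides equal the rational Catalan number $\frac1n\binom{n}{k}$, not~$1$. The paper handles the base case by first proving the counting formula for an explicit family (the \emph{concave-profile} permutations of \cref{sec:concave}) by an independent induction on $n$, and only then transferring it to arbitrary minimum-length elements via \cref{Fmin_MSeq}.

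\textbf{The recurrence is not available for an arbitrary ``corner'' inversion.} The only recurrence you have for $C_f$ is \cref{cor:Cat_rec}, which requires a \emph{double crossing} at an adjacent pair $(i,i+1)$, not an arbitrary inversion $(i,j)$. You give no argument that a repetition-free $f$ with $\Fr(f)\supsetneq\Fmin$ can always be brought (by c-equivalence) to one with a double crossing; in the paper this is \cref{find_double_cross}, which rests on the He--Nie and He--Yang structure theory (\cref{prop:structure}). Without it your induction can stall.

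\textbf{Matching the two recursions.} Even granting a double crossing, you still need that $\Fr(f_1)$ and $\Fr(f_2)$ are exactly the pieces of $\Fr(f)$ sitting in the two sub-rectangles determined by the removed corner (this is \cref{prop:F=G} and \cref{cor:supset_12}). That identification is what makes the Dyck-path side factor correctly; it is not formal and in the paper consumes all of \cref{sec:convex}, using the big-path model of \cref{sec:big-paths}. Also note your splitting is stated backwards: paths in $\DyckF(f)$ \emph{avoid} the corner, so they do not split there; it is the paths for the smaller set $\Fr(f')=\Fr(f)\setminus\{\text{corner pair}\}$ that split.

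\textbf{Part~(ii).} Your plan to grow $\Flet$ by ``inserting a transposition'' at each step would require, at every stage, exhibiting a local move on $f$ that adds a prescribed symmetric pair of points to $\Fr(f)$ while preserving repetition-freeness; you give no mechanism for this, and it is not clear one exists in general. The paper avoids this entirely: given any convex centrally symmetric $\Flet$, it writes down an explicit concave profile $\Hbf$ with $\F(\Hbf)=\Flet$ (\cref{prop:constructing_profile}) and reads off a permutation $f_\Hbf$ directly from the fractional parts of $\Hbf$ (\cref{dfn:f_Hbf}, \cref{prop:Hbf}). This is both simpler and what feeds the base case of the counting argument.
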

\end{minipage}

\begin{figure}
  \includegraphics[width=1.0\textwidth]{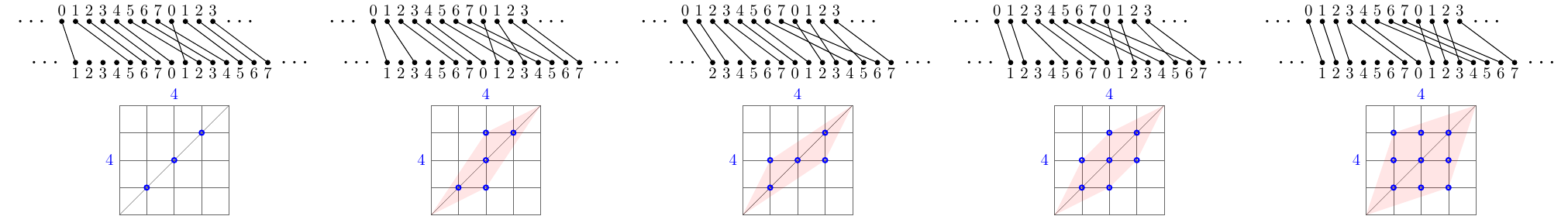}

  \caption[]{Top row: a collection of \repfree permutations $f\in\Bxc_{4,8}$ drawn in affine notation as in \cref{sec:perm-affine-notat}.  Bottom row: their inversion sets $\Fr(f)$; see \cref{dfn:F(f)} and \cref{ex:48}.}   \label{fig:Fs_48}
\end{figure}

\begin{example}\label{ex:48}
The bottom row of \cref{fig:Fs_48} contains all possible centrally symmetric convex subsets of $[k-1]\times[n-k-1]$ for $k=4$ and $n=8$. For each such subset $\Flet$, the top row contains a \repfree bounded affine permutation $f\in\Bknc$ satisfying $\Fr(f)=\Flet$.
\end{example}

\subsection{Other interpretations and further directions}
Even though our results are purely combinatorial, they provide a starting point for several unexpected connections to the recent results of~\cite{OR_Cox,GHSR,BHMPS} on the rational shuffle conjecture, Coxeter links, and flag Hilbert schemes. In particular, the appearance of convex sets in~\cite[Section~7]{BHMPS} indicates that open positroid varieties may provide the right geometric framework for the symmetric functions considered in~\cite{BHMPS}. We discuss these connections and list several conjectures in  \cref{sec:other}.

\subsection*{Acknowledgments}
We thank David Speyer and Eugene Gorsky for stimulating discussions.

\section{Bounded affine permutations}\label{sec:affine}

\subsection{Affine permutations}\label{sec:affine-permutations}
An ($n$-periodic) \emph{affine permutation} is a bijection $f:\Z\to\Z$ satisfying the periodicity condition $f(i+n)=f(i)+n$.  We let $\tSp_n$ denote the group (under composition) of $n$-periodic affine permutations.  Inversions, and the length function $\ell(f)$ (see \cref{sec:intro:rep-free}) are defined for any $f \in \tSp_n$.

For $k \in \Z$, let $\tSp_n^{(k)} \subset \tSp_n$ be the subset of affine permutations satisfying the condition
$$
\sum_{i=1}^n (f(i)-i) = kn.
$$
Then $\tSp_n = \bigsqcup_{k\in \Z} \tSp_n^{(k)}$.  The subgroup $\tSp_n^{(0)}$ is the Coxeter group of affine type $A$.  The group $\tSp_n$ is usually called the \emph{extended affine Weyl group}.

A \emph{bounded affine permutation} is an affine permutation $f \in \tSp_n$ that satisfies the additional condition $i \leq f(i) \leq i+n$ for all $i\in\Z$.  Denote by $\Bkn$ the (finite) set of bounded affine permutations in $\tSp_n^{(k)}$, called the set of \emph{$(k,n)$-bounded affine permutations}.  We see that if $\perm\in\Snc$ and $k = \k(\perm)$ then the associated bounded affine permutation $f$ (cf. \cref{sec:perm-affine-notat}) belongs to $\Bkn$. In other words, we have $\Bknc \subset \Bkn$.

For $k \in \Z$, let $\fkn \in \tSp_n^{(k)} \subset \tSp_n$ be given by $i\mapsto i+k$ for all $i\in\Z$.  Then $\{\fkn\mid k\in\Z\}$ is exactly the set of length $0$ elements in $\tSp_n$, and for $0 \leq k \leq n$ we have $\fkn \in \Bkn$.

For $i\in\Z$, let $s_i \in \tSp_n$ be the simple transposition given by $i\mapsto i+1$, $i+1\mapsto i$, and $j\mapsto j$ for all $j\not\equiv i,i+1$ modulo $n$. For $f\in\Bkn$ and $i\in\Z$, we have $\ell(s_if)=\ell(f)\pm1$ and $\ell(fs_i)=\ell(f)\pm1$. We write $fs_i<f$ if $\ell(fs_i)<\ell(f)$, and similarly for $fs_i>f$, $s_if<f$, and $s_if>f$.

Given $f\in\tSp_n$, define the \emph{cyclic shift} $\shf \in\tSp_n$ by
\begin{equation}\label{eq:cyc_shift}
  (\shf)(i):=f(i-1)+1 \quad\text{for all $i\in\Z$.}
\end{equation}
In other words, we have $\shf= f_{1,n} f f_{1,n}^{-1}$. Note that $\sigma$ preserves each of the subsets $\tSp_n^{(k)}$, $\Bkn$, and $\Bknc$.

\subsection{Conjugation and double move reduction}\label{sec:conj-double-move}

\begin{definition}
We say that $f\in\Bkn$ has a \emph{double crossing} at some $i\in\Z$ if $s_ifs_i<s_if<f$, $s_ifs_i<fs_i<f$, and $s_ifs_i,s_if,fs_i\in\Bkn$.
\end{definition}
Equivalently, for $a:=f^{-1}(i+1)$, $b:=f^{-1}(i)$, $c:=f(i+1)$, $d:=f(i)$, $f$ has a double crossing at $i$ if and only if $a<b<i<i+1<c<d$. See \figref{fig:moves}(right). In this case, we say that $s_ifs_i$ is obtained from $f$ by a \emph{double move}.

\begin{definition}
Let $f\in \Bkn$, $i\in\Z$, and $f':=s_ifs_i$. If $\ell(f)=\ell(s_ifs_i)$ and $s_ifs_i\in\Bkn$ then we say that $f$ and $f'$ are related by a \emph{length-preserving simple conjugation}. We say that $f,g\in\Bkn$ are \emph{\Ceqvt} and write $f\ceq g$ if $f$ and $g$ can be related by a sequence of length-preserving simple conjugations. See \figref{fig:moves}(left).
\end{definition}

\begin{figure}
\resizebox{\textwidth}{!}{

\setlength{\tabcolsep}{-2pt}
\begin{tabular}{cccc|cccc}
\scalebox{0.70}{\begin{tikzpicture}[xscale=0.40, yscale=0.40, baseline=(ZUZU.base)]
	\coordinate(ZUZU) at (0,2.00);
\node[draw,circle,scale=0.30,fill=black] (T0) at (0,4.00) {};
\node[anchor=south,scale=1.00] (LT0) at (0,4.00) {$a\strut$};
\node[draw,circle,scale=0.30,fill=black] (T1) at (1,4.00) {};
\node[draw,circle,scale=0.30,fill=black] (T2) at (2,4.00) {};
\node[draw,circle,scale=0.30,fill=black] (T3) at (3,4.00) {};
\node[anchor=south,scale=1.00] (LT3) at (3,4.00) {$b\strut$};
\node[draw,circle,scale=0.30,fill=black] (T4) at (4,4.00) {};
\node[draw,circle,scale=0.30,fill=black] (T5) at (5,4.00) {};
\node[anchor=south,scale=1.00] (LT5) at (5,4.00) {$i\strut\ \ $};
\node[draw,circle,scale=0.30,fill=black] (T6) at (6,4.00) {};
\node[anchor=south,scale=1.00] (LT6) at (6,4.00) {$\quad i+1\strut$};
\node[draw,circle,scale=0.30,fill=black] (B5) at (5,0.00) {};
\node[anchor=north,scale=1.00] (LT5) at (5,0.00) {$i\strut\ \ $};
\node[draw,circle,scale=0.30,fill=black] (B6) at (6,0.00) {};
\node[anchor=north,scale=1.00] (LT6) at (6,0.00) {$\quad i+1\strut$};
\node[draw,circle,scale=0.30,fill=black] (B7) at (7,0.00) {};
\node[draw,circle,scale=0.30,fill=black] (B8) at (8,0.00) {};
\node[draw,circle,scale=0.30,fill=black] (B9) at (9,0.00) {};
\node[anchor=north,scale=1.00] (LT9) at (9,0.00) {$c\strut$};
\node[draw,circle,scale=0.30,fill=black] (B10) at (10,0.00) {};
\node[draw,circle,scale=0.30,fill=black] (B11) at (11,0.00) {};
\node[anchor=north,scale=1.00] (LT11) at (11,0.00) {$d\strut$};
\draw[black, line width=1.00pt] (T0)--(B5);
\draw[black, line width=1.00pt] (T3)--(B6);
\draw[black, line width=1.00pt] (T5)--(B11);
\draw[black, line width=1.00pt] (T6)--(B9);
\node[scale=1.50, anchor=west] (DDDTR) at (6.30, 4.00) {$\dots$};
\node[scale=1.50, anchor=east] (DDDTR) at (-0.30, 4.00) {$\dots$};
\node[scale=1.50, anchor=west] (DDDTR) at (11.30, 0.00) {$\dots$};
\node[scale=1.50, anchor=east] (DDDTR) at (4.70, 0.00) {$\dots$};
\end{tikzpicture}
} & $\longleftrightarrow$ & \scalebox{0.70}{\begin{tikzpicture}[xscale=0.40, yscale=0.40, baseline=(ZUZU.base)]
	\coordinate(ZUZU) at (0,2.00);
\node[draw,circle,scale=0.30,fill=black] (T0) at (0,4.00) {};
\node[anchor=south,scale=1.00] (LT0) at (0,4.00) {$a\strut$};
\node[draw,circle,scale=0.30,fill=black] (T1) at (1,4.00) {};
\node[draw,circle,scale=0.30,fill=black] (T2) at (2,4.00) {};
\node[draw,circle,scale=0.30,fill=black] (T3) at (3,4.00) {};
\node[anchor=south,scale=1.00] (LT3) at (3,4.00) {$b\strut$};
\node[draw,circle,scale=0.30,fill=black] (T4) at (4,4.00) {};
\node[draw,circle,scale=0.30,fill=black] (T5) at (5,4.00) {};
\node[anchor=south,scale=1.00] (LT5) at (5,4.00) {$i\strut\ \ $};
\node[draw,circle,scale=0.30,fill=black] (T6) at (6,4.00) {};
\node[anchor=south,scale=1.00] (LT6) at (6,4.00) {$\quad i+1\strut$};
\node[draw,circle,scale=0.30,fill=black] (B5) at (5,0.00) {};
\node[anchor=north,scale=1.00] (LT5) at (5,0.00) {$i\strut\ \ $};
\node[draw,circle,scale=0.30,fill=black] (B6) at (6,0.00) {};
\node[anchor=north,scale=1.00] (LT6) at (6,0.00) {$\quad i+1\strut$};
\node[draw,circle,scale=0.30,fill=black] (B7) at (7,0.00) {};
\node[draw,circle,scale=0.30,fill=black] (B8) at (8,0.00) {};
\node[draw,circle,scale=0.30,fill=black] (B9) at (9,0.00) {};
\node[anchor=north,scale=1.00] (LT9) at (9,0.00) {$c\strut$};
\node[draw,circle,scale=0.30,fill=black] (B10) at (10,0.00) {};
\node[draw,circle,scale=0.30,fill=black] (B11) at (11,0.00) {};
\node[anchor=north,scale=1.00] (LT11) at (11,0.00) {$d\strut$};
\draw[black, line width=1.00pt] (T0)--(B6);
\draw[black, line width=1.00pt] (T3)--(B5);
\draw[black, line width=1.00pt] (T5)--(B9);
\draw[black, line width=1.00pt] (T6)--(B11);
\node[scale=1.50, anchor=west] (DDDTR) at (6.30, 4.00) {$\dots$};
\node[scale=1.50, anchor=east] (DDDTR) at (-0.30, 4.00) {$\dots$};
\node[scale=1.50, anchor=west] (DDDTR) at (11.30, 0.00) {$\dots$};
\node[scale=1.50, anchor=east] (DDDTR) at (4.70, 0.00) {$\dots$};
\end{tikzpicture}
} & \hspace{0.2in} &\hspace{0.2in} & \scalebox{0.70}{\begin{tikzpicture}[xscale=0.40, yscale=0.40, baseline=(ZUZU.base)]
	\coordinate(ZUZU) at (0,2.00);
\node[draw,circle,scale=0.30,fill=black] (T0) at (0,4.00) {};
\node[anchor=south,scale=1.00] (LT0) at (0,4.00) {$a\strut$};
\node[draw,circle,scale=0.30,fill=black] (T1) at (1,4.00) {};
\node[draw,circle,scale=0.30,fill=black] (T2) at (2,4.00) {};
\node[draw,circle,scale=0.30,fill=black] (T3) at (3,4.00) {};
\node[anchor=south,scale=1.00] (LT3) at (3,4.00) {$b\strut$};
\node[draw,circle,scale=0.30,fill=black] (T4) at (4,4.00) {};
\node[draw,circle,scale=0.30,fill=black] (T5) at (5,4.00) {};
\node[anchor=south,scale=1.00] (LT5) at (5,4.00) {$i\strut\ \ $};
\node[draw,circle,scale=0.30,fill=black] (T6) at (6,4.00) {};
\node[anchor=south,scale=1.00] (LT6) at (6,4.00) {$\quad i+1\strut$};
\node[draw,circle,scale=0.30,fill=black] (B5) at (5,0.00) {};
\node[anchor=north,scale=1.00] (LT5) at (5,0.00) {$i\strut\ \ $};
\node[draw,circle,scale=0.30,fill=black] (B6) at (6,0.00) {};
\node[anchor=north,scale=1.00] (LT6) at (6,0.00) {$\quad i+1\strut$};
\node[draw,circle,scale=0.30,fill=black] (B7) at (7,0.00) {};
\node[draw,circle,scale=0.30,fill=black] (B8) at (8,0.00) {};
\node[draw,circle,scale=0.30,fill=black] (B9) at (9,0.00) {};
\node[anchor=north,scale=1.00] (LT9) at (9,0.00) {$c\strut$};
\node[draw,circle,scale=0.30,fill=black] (B10) at (10,0.00) {};
\node[draw,circle,scale=0.30,fill=black] (B11) at (11,0.00) {};
\node[anchor=north,scale=1.00] (LT11) at (11,0.00) {$d\strut$};
\draw[black, line width=1.00pt] (T0)--(B6);
\draw[black, line width=1.00pt] (T3)--(B5);
\draw[black, line width=1.00pt] (T5)--(B11);
\draw[black, line width=1.00pt] (T6)--(B9);
\node[scale=1.50, anchor=west] (DDDTR) at (6.30, 4.00) {$\dots$};
\node[scale=1.50, anchor=east] (DDDTR) at (-0.30, 4.00) {$\dots$};
\node[scale=1.50, anchor=west] (DDDTR) at (11.30, 0.00) {$\dots$};
\node[scale=1.50, anchor=east] (DDDTR) at (4.70, 0.00) {$\dots$};
\end{tikzpicture}
} & $\longrightarrow$ & \scalebox{0.70}{\begin{tikzpicture}[xscale=0.40, yscale=0.40, baseline=(ZUZU.base)]
	\coordinate(ZUZU) at (0,2.00);
\node[draw,circle,scale=0.30,fill=black] (T0) at (0,4.00) {};
\node[anchor=south,scale=1.00] (LT0) at (0,4.00) {$a\strut$};
\node[draw,circle,scale=0.30,fill=black] (T1) at (1,4.00) {};
\node[draw,circle,scale=0.30,fill=black] (T2) at (2,4.00) {};
\node[draw,circle,scale=0.30,fill=black] (T3) at (3,4.00) {};
\node[anchor=south,scale=1.00] (LT3) at (3,4.00) {$b\strut$};
\node[draw,circle,scale=0.30,fill=black] (T4) at (4,4.00) {};
\node[draw,circle,scale=0.30,fill=black] (T5) at (5,4.00) {};
\node[anchor=south,scale=1.00] (LT5) at (5,4.00) {$i\strut\ \ $};
\node[draw,circle,scale=0.30,fill=black] (T6) at (6,4.00) {};
\node[anchor=south,scale=1.00] (LT6) at (6,4.00) {$\quad i+1\strut$};
\node[draw,circle,scale=0.30,fill=black] (B5) at (5,0.00) {};
\node[anchor=north,scale=1.00] (LT5) at (5,0.00) {$i\strut\ \ $};
\node[draw,circle,scale=0.30,fill=black] (B6) at (6,0.00) {};
\node[anchor=north,scale=1.00] (LT6) at (6,0.00) {$\quad i+1\strut$};
\node[draw,circle,scale=0.30,fill=black] (B7) at (7,0.00) {};
\node[draw,circle,scale=0.30,fill=black] (B8) at (8,0.00) {};
\node[draw,circle,scale=0.30,fill=black] (B9) at (9,0.00) {};
\node[anchor=north,scale=1.00] (LT9) at (9,0.00) {$c\strut$};
\node[draw,circle,scale=0.30,fill=black] (B10) at (10,0.00) {};
\node[draw,circle,scale=0.30,fill=black] (B11) at (11,0.00) {};
\node[anchor=north,scale=1.00] (LT11) at (11,0.00) {$d\strut$};
\draw[black, line width=1.00pt] (T0)--(B5);
\draw[black, line width=1.00pt] (T3)--(B6);
\draw[black, line width=1.00pt] (T5)--(B9);
\draw[black, line width=1.00pt] (T6)--(B11);
\node[scale=1.50, anchor=west] (DDDTR) at (6.30, 4.00) {$\dots$};
\node[scale=1.50, anchor=east] (DDDTR) at (-0.30, 4.00) {$\dots$};
\node[scale=1.50, anchor=west] (DDDTR) at (11.30, 0.00) {$\dots$};
\node[scale=1.50, anchor=east] (DDDTR) at (4.70, 0.00) {$\dots$};
\end{tikzpicture}
}
\\ &&&&\vspace{-0.1in}&&& \\
$f$ && $s_ifs_i$ &&& $f$ && $s_ifs_i$ \\
\multicolumn{3}{c}{length-preserving simple conjugation} & & & \multicolumn{3}{c}{double move}
\end{tabular}
}
  \caption{\label{fig:moves} Moves for computing $\Rt_f(q)$ and $\Cat_f$.}
\end{figure}

The following result describes the structure of $\Bknc$ under double moves and \Ceqvce.
\begin{proposition}\label{prop:structure} \
\begin{theoremlist}
\item\label{item:minimumlength} The minimal length elements of $\Bknc$ are of length $d:=\gcd(k,n)-1$ and all such elements are related by cyclic shift~\eqref{eq:cyc_shift} and \Ceqvce.
\item\label{item:reduce} Any $f \in \Bknc$ can be reduced to a minimal length element of $\Bknc$ by double moves and \Ceqvce.
\end{theoremlist}
\end{proposition}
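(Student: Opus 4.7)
The plan is to establish Part (ii) first via induction on $\ell(f)$, and to deduce the lower bound and existence parts of Part (i) by independent constructions; the uniqueness statement in Part (i) will require a separate direct argument.

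\textbf{Lower bound in Part (i).} Writing $f=f_{k,n}\cdot w$ with $w\in\tSp_n^{(0)}$, we have $f(i)=w(i)+k$, so the inversions of $f$ and of $w$ coincide, giving $\ell(f)=\ell(w)=:m$. Taking a reduced word $w=s_{i_1}\cdots s_{i_m}$, the partial products $\bar{f_{k,n}},\ \bar{f_{k,n}}\bar s_{i_1},\ \ldots,\ \bar{f_{k,n}}\bar w=\bar f$ form a sequence in $S_n$ whose consecutive terms differ by right multiplication by a transposition, so the number of cycles changes by $\pm 1$ at each step. Starting from $\gcd(k,n)=d+1$ cycles (for $\bar{f_{k,n}}$) and ending at a single cycle (for $\bar f\in\Snc$) forces $m\ge d$. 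For achievability I would construct a length-$d$ element of $\Bknc$ explicitly, starting from $f_{k,n}$ and applying $d$ right multiplications $s_{j_1},\ldots,s_{j_d}$ chosen so that each induced transposition $(j_r,j_r+1)$ merges two distinct cycles of the current underlying $S_n$-permutation. Such a choice is possible whenever the current permutation has more than one cycle, because two consecutive integers in different cycles can always be found, and one checks that each step preserves the $\Bkn$-condition.

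\textbf{Part (ii).} Proceeding by induction on $\ell(f)$, the case $\ell(f)=d$ is trivial. For the inductive step with $\ell(f)>d$ I reduce to the combinatorial claim: \emph{some $f'\ceq f$ admits a double crossing.} Given this, the double move produces $s_if's_i\in\Bknc$ with $\ell(s_if's_i)=\ell(f)-2$, the $n$-cycle property being preserved because conjugation by a simple reflection in $S_n$ preserves cycle type, and the inductive hypothesis applies. To prove the claim, I interpret inversions of $f$ as crossings in the affine wiring diagram of \cref{fig:aff_notn} and observe that, since the minimum number of crossings compatible with $\bar f$ being an $n$-cycle is $d$, the excess $\ell(f)-d$ must come from pairs of strands whose mutual crossings are \emph{redundant}: removing a pair still leaves $\bar f$ an $n$-cycle. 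Length-preserving simple conjugations correspond to local slides of crossings, and the core of the argument is that any two such redundant crossings can be slid into adjacent positions $i,i+1$ so as to produce the pattern $a<b<i<i+1<c<d$ with $f'(a)=i+1$, $f'(b)=i$, $f'(i+1)=c$, $f'(i)=d$.

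\textbf{Uniqueness in Part (i).} Since double moves strictly decrease length, they cannot relate two length-$d$ elements, so this claim must be proved directly using cyclic shift and $c$-equivalence. I would classify length-$d$ elements of $\Bknc$ combinatorially: each corresponds to a choice of $d$ transpositions merging the $d+1$ cycles of $\bar{f_{k,n}}$ into a single $n$-cycle, indexed by a spanning tree on the set of cycles together with positional data for the merges. The expectation is that $c$-equivalence suffices to bring the spanning-tree data into a canonical form (for instance, mergers between consecutive cycles along a chosen enumeration) while cyclic shift accounts for the remaining positional freedom.

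\textbf{Main obstacle.} The central difficulty is the combinatorial claim in Part (ii): given $\ell(f)>d$, producing a double crossing by $c$-equivalences alone. This demands careful control of how crossings slide in the affine wiring diagram, together with a precise account of which crossings are redundant under the $n$-cycle constraint. The classification needed for the uniqueness statement in Part (i) is a secondary, but still nontrivial, source of technical work.
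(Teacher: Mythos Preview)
Your lower-bound argument for Part~(i) is essentially the paper's: both observe that $\bar{f_{k,n}}$ has $\gcd(k,n)$ cycles and that each simple transposition changes the cycle count by~$\pm 1$. Your construction of a length-$d$ element is also in the same spirit as the paper's $f_{k,n}s_1\cdots s_d$.

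The two places you flag as obstacles, however, are genuine gaps, and your sketches do not close them.

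For Part~(ii), the assertion that ``any two redundant crossings can be slid into adjacent positions'' by length-preserving simple conjugations is unjustified. Such conjugations are highly constrained: $s_ifs_i$ has the same length as $f$ only in one specific local pattern, and there is no a priori reason the available moves suffice to bring two distant crossings together while preserving the boundedness condition $i<f(i)<i+n$ throughout. Your notion of ``redundant'' (a pair of crossings whose removal keeps $\bar f$ an $n$-cycle) also does not match what a double move actually does, which is conjugation rather than deletion. The statement you need---that every non-minimal element of a conjugacy class is \Ceqvt\ to one admitting a double move---is precisely the content of a nontrivial theorem of He--Nie, valid for all affine Weyl groups, and the paper simply quotes it (together with a short lemma checking that the moves stay inside $\Bknc$).

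For the uniqueness in Part~(i), your spanning-tree parametrization of length-$d$ elements is reasonable, but the claim that \Ceqvce\ normalizes the tree is unsupported. The paper again avoids any direct combinatorics: it uses that $n$-cycles are \emph{elliptic} in $S_n$, whence by He--Nie the relevant $\tSp_n^{(0)}$-conjugacy class is \emph{nice}, which by definition forces all minimal-length elements to be $\capprox$-related; a separate result of He--Yang shows $\Bknc$ lies in a single $\tSp_n$-conjugacy class, so cyclic shift connects the finitely many $\tSp_n^{(0)}$-classes.

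In summary, the paper outsources both hard steps to the structural theory of conjugacy classes in affine Weyl groups, whereas your proposal attempts a direct combinatorial attack whose two key lemmas remain open.
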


\subsection{Proof of Proposition~\ref{prop:structure}}
We deduce these statements from the results of He and Nie \cite{HN} and He and Yang \cite{HY}.

Following~\cite{HN}, we introduce the following notation. For $f,f' \in \tSp_n$, we write $f \to f'$ if there is a sequence $f = f_0, f_1,f_2,\ldots,f_r = f'$ such that $f_j = s_{i_j} f_{j-1} s_{i_{j}}$ for $j=1,2,\ldots,r$, satisfying $\ell(f_j) \leq \ell(f_{j-1})$.  We write $f \capprox f'$ if $f \to f'$ and $f' \to f$.  Thus $f \to f'$ if $f'$ can be obtained from $f$ by a sequence of \Ceqvces and double moves, and $f \capprox f'$ if $f$ and $f'$ are \Ceqvt, without the restriction on staying inside $\Bkn$.

\begin{lemma}\label{lem:inside}
Let $f \in \Bknc$ and $f'\in\tSp_n$ be such that $f \to f'$.  Then $f' \in \Bknc$.
\end{lemma}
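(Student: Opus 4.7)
The plan is to prove the lemma by induction along a sequence of single conjugations witnessing $f \to f'$. The key step is to show: if $f \in \Bknc$ and $g := s_i f s_i$ satisfies $\ell(g) \leq \ell(f)$, then $g \in \Bknc$. The proof then follows by iterating this one-step statement.

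For the single-step claim, I plan to verify three properties of $g$. First, the total translation $\sum_{j=1}^n(g(j)-j)$ descends to a group homomorphism $\tSp_n \to \Z$, so conjugation preserves it and $g \in \tSp_n^{(k)}$. Second, the reduction $\tSp_n \to S_n$ is a homomorphism, so $\bar g$ is conjugate to $\bar f$ in $S_n$, inheriting its cycle type; in particular $\bar g$ is an $n$-cycle. Third, and most delicately, I need to verify the bounded condition $j \leq g(j) \leq j+n$ for all $j \in \Z$.

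For the boundedness verification, using $n$-periodicity (and $s_i = s_{i+n}$) I may assume $i \in \{0,1,\ldots,n-1\}$. For $j \not\equiv i, i+1 \pmod n$, a quick case-split on the residue of $f(j)$ modulo $n$ shows that boundedness of $g(j) = s_i(f(j))$ follows directly from boundedness of $f(j)$, since the potential $\pm 1$ shifts are absorbed by strict inequalities forced by $j \not\equiv f(j) \pmod n$. The delicate positions are $j \equiv i, i+1 \pmod n$: computing $g(i) = s_i(f(i+1))$ and $g(i+1) = s_i(f(i))$ shows that boundedness can fail only in two extreme configurations, namely when $f(i+1) = i+n$ (producing $g(i) = i+n+1$) or when $f(i) = i+1$ (producing $g(i+1) = i$).

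The punchline, and the main obstacle, will be showing that each of these bad configurations forces $\ell(g) = \ell(f)+2$, contradicting $\ell(g) \leq \ell(f)$. In both cases I expect to check that $f(i) < f(i+1)$ and $f^{-1}(i) < f^{-1}(i+1)$ by exploiting that the extreme value $i+n$ of $f(i+1)$ or $i+1$ of $f(i)$ pins down the positions of $i$ and $i+1$ in the inverse. This gives $fs_i > f$ and $s_if > f$. One more comparison of $(s_if)(i)$ with $(s_if)(i+1)$, using the inequalities on $u = f(i)$ or $v = f(i+1)$, then yields $(s_if)s_i > s_if$, forcing $\ell(s_ifs_i) = \ell(s_if)+1 = \ell(f)+2$. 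Modulo this finite combinatorial verification, the conclusion $g \in \Bknc$ follows immediately from the three observations above, and the lemma then drops out by induction.
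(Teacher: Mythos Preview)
Your proof is correct and follows essentially the same approach as the paper's: reduce to a single conjugation step, use that $f\in\Bknc$ gives the \emph{strict} bounds $j+1\le f(j)\le j+n-1$, and deduce that $s_ifs_i$ remains bounded. The paper's proof is terse---it simply asserts ``It follows from this that $f'$ is also a bounded affine permutation''---whereas you correctly isolate the two potentially problematic configurations ($f(i+1)=i+n$ and $f(i)=i+1$) and verify that each forces $\ell(s_ifs_i)=\ell(f)+2$, which is the detail the paper leaves implicit.
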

\begin{proof}
Suppose $f \in \Bknc$ and $f' = s_i f s_i$ satisfies $\ell(f') \leq \ell(f)$.  Since $f \in \Bknc$, we have $f(j) \in [j+1,j+n-1]$ for all $j \in \Z$.  It follows from this that $f'$ is also a bounded affine permutation, and thus $f' \in \Bknc$.
\end{proof}

\begin{theorem}[{\cite[Theorem 2.9]{HN}}] \label{thm:HN}
Let $\O'$ be an $\tSp^{(0)}_n$-conjugacy class in $\tSp_n$ and let $\Omin' \subset \O'$ denote the set of elements of minimal length.  Then for any $f \in \O'$, there exists $f' \in \Omin'$ such that $f \to f'$.
\end{theorem}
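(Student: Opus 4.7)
The plan is to prove this by induction on $\ell(f) - \ell_{\min}$, where $\ell_{\min} := \min_{g \in \O'} \ell(g)$, following the strategy originating with Geck--Pfeiffer in the finite Weyl group case. The base case $\ell(f) = \ell_{\min}$ is immediate: take $f' = f$. The inductive step reduces to the following Key Lemma: if $\ell(f) > \ell_{\min}$, then the $\capprox$-equivalence class of $f$ (the set of elements reachable from $f$ via length-preserving simple conjugations) contains some element $g$ admitting a simple reflection $s_i$ with $\ell(s_i g s_i) = \ell(g) - 2$. Granting this, $f \capprox g \to s_i g s_i$ produces a $\to$-reduction to strictly smaller length, and the inductive hypothesis applied inside $\O'$ (since $\O'$ is closed under $\capprox$ and under length-decreasing conjugation) finishes the argument.

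First I would reduce to the case of $\tSp_n^{(0)}$, the affine Coxeter group. Writing $f = \tau^m w$ with $w \in \tSp_n^{(0)}$ and $\tau$ the length-zero translation $f_{1,n}$, conjugation by elements of $\tSp_n^{(0)}$ preserves the $\tau^m$ factor and acts by usual conjugation on $w$, so the problem transfers to an analogous minimum-length statement for an $\tSp_n^{(0)}$-conjugacy class inside the Coxeter group $\tSp_n^{(0)}$. From here, for any simple reflection $s_i$, the standard trichotomy $\ell(s_i g s_i) \in \{\ell(g)-2, \ell(g), \ell(g)+2\}$ organizes the analysis: the Key Lemma asks precisely that the first alternative be achievable somewhere in the $\capprox$-class.

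The main obstacle is proving the Key Lemma, and I would handle it by splitting on the support $J = \supp(g)$ of a generic $\capprox$-representative. In the \emph{non-elliptic} case, where some element of the $\capprox$-class of $f$ lies in a proper standard parabolic subgroup $W_J \subsetneq \tSp_n^{(0)}$, one applies induction on the rank to the Geck--Pfeiffer/He--Nie theorem inside $W_J$ and lifts the reduction back. In the \emph{elliptic} case (no such $J$), I would invoke the theory of \emph{straight elements} and Newton points from He's earlier work: a straight element already realizes the minimum length in its conjugacy class, so if $f$ is not straight, a careful analysis of left and right descent sets across the cyclic-shift orbit must produce the desired common descent. The delicate point, which I expect to be the hardest, is showing that length-preserving cyclic shifts generate enough of the $\capprox$-class to reach an element with a double descent $s_i$; this relies on Coxeter-theoretic exchange conditions specific to affine type $\tilde A$, together with the fact that cyclic shift is itself conjugation by $f_{1,n}$, so the $\capprox$-relation is broad enough to explore the full Bruhat-theoretic neighborhood of $f$.
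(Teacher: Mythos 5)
This statement is not proved in the paper at all: it is imported verbatim as \cite[Theorem 2.9]{HN}, so there is no internal proof to compare against, and any self-contained argument would have to reconstruct a substantial piece of He--Nie's theory of minimal length elements in (extended) affine Weyl groups. Your proposal correctly identifies the Geck--Pfeiffer-style strategy that underlies the literature proof --- induct on $\ell(f)-\ell_{\min}$ and reduce to a Key Lemma asserting that the $\capprox$-class of any non-minimal element contains some $g$ with $\ell(s_igs_i)=\ell(g)-2$ --- but that Key Lemma \emph{is} the theorem; everything you write after stating it is a plan rather than a proof, and the elliptic case in particular ("a careful analysis of left and right descent sets across the cyclic-shift orbit must produce the desired common descent") is precisely where He and Nie need the machinery of Newton points, straight elements, and a genuine case analysis via partial conjugation. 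As written, the hardest step is asserted, not established.

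There is also a concrete error in your reduction to $\tSp_n^{(0)}$. Writing $f=\tau^m w$ with $\tau=f_{1,n}$ and $w\in\tSp_n^{(0)}$, conjugation by $u\in\tSp_n^{(0)}$ gives $ufu^{-1}=\tau^m\bigl(\tau^{-m}u\tau^m\bigr)wu^{-1}$, so the action on the $\tSp_n^{(0)}$-component is \emph{twisted} conjugation by the diagram automorphism $\mathrm{Ad}(\tau^{-m})$ (rotation of the affine Dynkin diagram of type $\tilde A_{n-1}$), not ordinary conjugation. This is exactly why He--Nie formulate and prove their results for $\delta$-twisted conjugacy classes; an argument that ignores the twist does not apply to the classes relevant here (note that the paper uses the theorem precisely for classes meeting $\Bknc$ with $k\not\equiv 0$, where the twist is nontrivial). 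Similarly, in the non-elliptic case the correct reduction is not to a proper standard parabolic $W_J\subsetneq\tSp_n^{(0)}$ (these are finite and will generally miss the class entirely) but to a smaller extended affine Weyl group $X_*(T)\rtimes W_J$ attached to the Newton point. Given that the paper treats this as a black-box citation, the appropriate course is either to cite \cite{HN} as the authors do, or to carry out the twisted-conjugation bookkeeping and the elliptic-case descent argument in full --- neither of which your proposal yet does.
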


\begin{proposition}\label{prop:HN}
Suppose that $\O'$ is an $\tSp^{(0)}_n$-conjugacy class in $\tSp_n$ with a nonempty intersection with $\Bknc$.  
Then for $f,f' \in \Omin'$, we have $f \capprox f'$.
\end{proposition}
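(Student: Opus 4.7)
The plan is to deduce the proposition directly from the full He--Nie theorem on minimum-length elements in conjugacy classes of extended affine Weyl groups. The statement recorded as \cref{thm:HN} is only the ``reduction'' half of \cite{HN}: every element of a conjugacy class admits a $\to$-reduction to an element of $\Omin'$. The same theorem of He--Nie (and its refinement by He--Yang in \cite{HY}) contains a second half asserting that any two minimum-length elements of a conjugacy class are connected by a sequence of length-preserving simple conjugations $f \mapsto s_i f s_i$, i.e., they are $\capprox$-related. This second half, applied to the present setting, is precisely what \cref{prop:HN} claims.

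Concretely, I would first identify $\tSp_n$ with the extended affine Weyl group of type $\widetilde{A}_{n-1}$ and check that its length function, simple reflections $\{s_i\}$, and the $\tSp_n^{(0)}$-conjugation action all match the framework of \cite{HN, HY}. The hypothesis $\O' \cap \Bknc \neq \emptyset$ then plays only a contextual role, anchoring the discussion to the conjugacy classes relevant for bounded affine permutations, and is not used essentially in invoking He--Nie. In particular, by \cref{lem:inside} the intersection $\O' \cap \Bknc$ contains at least one minimum-length element, which is enough to guarantee that $\Omin'$ is a nonempty finite set.

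The one point meriting care is that He--Nie treats conjugacy classes of the full extended group, whereas we work with $\tSp_n^{(0)}$-orbits. Since each simple reflection $s_i$ lies in $\tSp_n^{(0)}$, every $\capprox$-path produced by the He--Nie theorem is automatically a sequence of $\tSp_n^{(0)}$-conjugations; consequently the path stays inside the $\tSp_n^{(0)}$-orbit $\O'$ and connects $f$ and $f'$ within $\Omin'$ itself. I expect this bookkeeping to be the only subtlety: the proof is in essence a careful citation, with the observation above ensuring that the cited theorem applies in the precise form needed for \cref{prop:HN}.
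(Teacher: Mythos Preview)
The proposal has a genuine gap: the He--Nie result you invoke does not, in general, give the relation $\capprox$ you need. He--Nie's main theorem for conjugacy classes in an extended affine Weyl group asserts that any two minimum-length elements are related by $\tilde\approx$, a relation that allows, in addition to length-preserving conjugations $f\mapsto s_i f s_i$, a single conjugation by a length-zero element $\tau\in\Omega$ (here, a power of $f_{1,n}$). Your sentence ``Since each simple reflection $s_i$ lies in $\tilde S_n^{(0)}$, every $\capprox$-path produced by the He--Nie theorem is automatically a sequence of $\tilde S_n^{(0)}$-conjugations'' presupposes precisely what must be shown: that the path uses only simple reflections and no $\Omega$-step. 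If the chain realizing $f\tilde\approx f'$ involves a $\tau$-conjugation, it may leave the $\tSp_n^{(0)}$-orbit $\O'$ and return to it later; knowing that both endpoints lie in $\O'$ does not preclude this.

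This is exactly where the hypothesis $\O'\cap\Bknc\neq\emptyset$ enters in the paper's proof, and your claim that it is ``not used essentially'' is incorrect. Any $f\in\Bknc$ reduces modulo $n$ to an $n$-cycle, and $n$-cycles are \emph{elliptic} elements of $S_n$. By \cite[Corollary~4.7]{HN} this forces $\O'$ to be \emph{nice} in the sense of \cite[Section~4.1]{HN}, and the definition of ``nice'' is precisely what guarantees that minimum-length elements are $\capprox$-related without any extra $\Omega$-conjugation. So the ellipticity input is the crux of the argument, not a contextual aside; your plan needs this step (or an equivalent one) to close the gap between $\tilde\approx$ and $\capprox$.
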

\begin{proof}
For $f \in \Bknc$, the image $\perm \in S_n$ is an $n$-cycle.  The $n$-cycles are {\it elliptic} elements in $S_n$, so by \cite[Corollary 4.7]{HN}, we have that $\O'$ is {\it nice} in the sense of \cite[Section~4.1]{HN}. It follows from the definition of \emph{nice} that $f \capprox f'$ for $f,f' \in \Omin'$. 
\end{proof}

\begin{proposition}\label{prop:allone}
The elements of $\Bknc$ all belong to a single $\tSp_n$-conjugacy class $\O$ in $\tSp_n$.
\end{proposition}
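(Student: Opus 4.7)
The plan is to exploit the semidirect-product structure $\tSp_n \cong \Z^n \rtimes S_n$, where to each $f \in \tSp_n$ we associate the pair $(\alpha, \fb) \in \Z^n \rtimes S_n$ determined by $f(i) = \fb(i) + n \alpha_i$ for $i \in [n]$. In these coordinates, $f \in \tSp_n^{(k)}$ iff $\sum_{i=1}^n \alpha_i = k$. Embedding $S_n \hookrightarrow \tSp_n^{(0)}$ in the usual way and letting $t_\gamma \in \tSp_n$ denote the translation $t_\gamma(i) = i + n\gamma_i$ for $i \in [n]$ (extended periodically), conjugation by $u \in S_n$ conjugates $\fb$ by $u$ and relabels $\alpha$ accordingly, while a short direct computation shows that $(t_\gamma f t_\gamma^{-1})(i) = f(i) + n(\gamma_{\fb(i)} - \gamma_i)$ for $i \in [n]$; so conjugation by $t_\gamma$ fixes $\fb$ and shifts $\alpha$ by the ``coboundary'' $i \mapsto \gamma_{\fb(i)} - \gamma_i$.

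Given $f, f' \in \Bknc$ with reductions $\fb = \perm$ and $\fb' = \perm'$, I proceed in two steps. First, since $\perm$ and $\perm'$ are both $n$-cycles, pick $u \in S_n$ with $u\perm u^{-1} = \perm'$ and conjugate $f$ by the lift of $u$; this produces $g \in \tSp_n^{(k)}$ with $\bar g = \perm'$. Writing $g \leftrightarrow (\beta, \perm')$ and $f' \leftrightarrow (\alpha, \perm')$, with $\sum_i \beta_i = \sum_i \alpha_i = k$, I then seek $\gamma \in \Z^n$ solving $\gamma_{\perm'(i)} - \gamma_i = \alpha_i - \beta_i$ for every $i \in [n]$. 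Writing $\perm' = (c_1 \, c_2 \, \cdots \, c_n)$, set $\gamma_{c_1} := 0$ and $\gamma_{c_{j+1}} := \gamma_{c_j} + \alpha_{c_j} - \beta_{c_j}$: the sole consistency check after one loop around the cycle is $\sum_{i=1}^n (\alpha_i - \beta_i) = 0$, which holds since both sums equal $k$. Thus $t_\gamma g t_\gamma^{-1} = f'$, and composing, $(t_\gamma u) f (t_\gamma u)^{-1} = f'$.

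The argument reduces to this telescoping identity along the single cycle of $\perm'$, carried out inside the semidirect-product computation for $\Z^n \rtimes S_n$. The crucial role of the $n$-cycle hypothesis is that it yields a single winding-sum invariant (namely $k$), which automatically matches for any two elements of $\Bknc$; with multiple cycles one would need per-cycle windings to agree. I do not foresee a nontrivial obstacle, as both the semidirect-product identification and the formula for conjugation by $t_\gamma$ are routine.
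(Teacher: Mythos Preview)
Your argument is correct. The semidirect-product description $\tSp_n \cong \Z^n \rtimes S_n$ is standard, the conjugation formulas are verified by the computations you sketch, and the key point---that the coboundary equation $\gamma_{\perm'(i)} - \gamma_i = \alpha_i - \beta_i$ is solvable precisely because $\perm'$ is a single cycle and the two weight vectors have the same total sum $k$---is exactly right. Note that you never use the boundedness condition $i < f(i) < i+n$; you only use that the reduction is an $n$-cycle and that $f,f' \in \tSp_n^{(k)}$. This is fine, since the statement only asserts conjugacy in the ambient group $\tSp_n$.

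Your route is genuinely different from the paper's. The paper invokes a general classification result of He--Yang on conjugacy classes in extended affine Weyl groups (their Proposition~2.1), identifying $\Bknc$ as lying inside a set of the form $\eta^{-1}(\Ocal_0)\cap \kappa_\delta^{-1}(\nu)$ which that result shows is a single conjugacy class. Your argument, by contrast, is an elementary and self-contained cohomological computation in $\Z^n \rtimes S_n$: the obstruction to conjugating one lift of $\perm'$ to another is a class in $H^1(\langle\perm'\rangle,\Z^n)$, and when $\perm'$ is an $n$-cycle this group is $\Z$, detected exactly by the total winding $\sum_i \alpha_i$. What you gain is transparency and independence from the He--Yang machinery; what the paper's approach gains is a uniform framework (the surrounding results of He--Nie are also used to establish Proposition~\ref{prop:structure}), so the citation fits naturally with the rest of the section.
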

\begin{proof}
Our goal is to apply~\cite[Proposition~2.1]{HY}.  In the notation of~\cite{HY}, we have $\delta=\id$, $\tilde W'=\tilde W$, $(P^\vee/Q^\vee)_\delta\cong \Z/n\Z$, and $\Ocal_0$ is the $S_n$-conjugacy class consisting of $n$-cycles in $S_n$. Choosing $\nu:=k\in \Z/n\Z$, we see that $\kappa^{-1}_\delta(\nu)$ contains $\Bknc$. Similarly, $\eta^{-1}(\Ocal_0)$ contains all bounded affine permutations whose reduction modulo $n$ is  an $n$-cycle. Thus $\Bknc$ is a subset of $\eta^{-1}(\Ocal_0)\cap \kappa^{-1}_\delta(\nu)$, which, according to~\cite[Proposition~2.1]{HY}, is a single $\tSp_n$-conjugacy class in $\tSp_n$.
\end{proof}

We are ready to finish the proof of \cref{prop:structure}. By Proposition~\ref{prop:allone}, there is an $\tSp_n$-conjugacy class $\O \subset \tSp_n$ containing $\Bknc$.  Since $\tSp_n = \tSp^{(0)}_n \rtimes \langle f_{1,n} \rangle$, there exist finitely-many distinct $\tSp^{(0)}_n$-conjugacy classes $\O'_0,\ldots,\O'_{r-1}$ such that $\O = \bigsqcup_i \O'_i$ and $\O'_i = \sigma^i \O'_0$.  Note that the cyclic shift $\sigma$ is length-preserving.  Thus the minimal length elements in $\O'_0,\ldots,\O'_{r-1}$ have the same length, and this length is equal to the minimal length of any element in $\Bknc$. 

It is easy to see that $\ncyc(f_{k,n}) = \gcd(k,n)$, where for $f \in \tSp_n$, we denote by $\ncyc(f):=\ncyc(\fb)$ the number of cycles of the permutation $\fb$.  Now, for $f \in \tSp_n$, we have $\ncyc(s_i f) = \ncyc(f s_i) \in \{\ncyc(f)+1,\ncyc(f) -1\}$.  It follows that for $f \in \Bknc$, we have $\ell(f) \geq \gcd(k,n)-1$.  On the other hand, it is easy to see that $f_{k,n} s_1 s_2 \cdots s_{ \gcd(k,n)-1} \in \Bknc$.  Thus the minimal length of $f \in \Bknc$ is $d:=\gcd(k,n)-1$. Since $\ncyc(\cdot)$ is invariant under conjugation, we find that any $f\in\Bknc$ with $\ell(f)=d$ has minimal length in its  $\tSp^{(0)}_n$-conjugacy class.

Let $f,f'\in \Bknc$ be two elements of length $d$. By \cref{prop:allone}, $f$ and $f'$ are $\tSp_n$-conjugate. Thus $f$ is $\tSp^{(0)}_n$-conjugate to a cyclic shift $g=\sigma^if'$ of $f'$. Let $\O'$ be the $\tSp^{(0)}_n$-conjugacy class containing $f$ and $g$. Since $\ell(f)=\ell(g)=d$ is minimal, by \cref{prop:HN}, we get $f\capprox g$. By \cref{lem:inside}, having $f\capprox g$ for $f\in\Bknc$ implies that $f\ceq g$. This proves \cref{item:minimumlength}.

As we showed above, the minimum length elements of $\Bknc$ are also minimum length elements in their $\tSp_n^{(0)}$-conjugacy class.  Thus \cref{item:reduce} follows from Theorem~\ref{thm:HN} combined with \cref{lem:inside}. \qed

\section{Positroid Catalan numbers}\label{sec:affine_R_poly}

\subsection{$R$-polynomials}
For each $(k,n)$-bounded affine permutation $f$, let $\Pio_f$ denote the \emph{open positroid variety}~\cite{KLS}.  The \emph{$R$-polynomial} $R_f(q):=\#\Pio_f(\Fbb_q)$ counts the number of points in $\Pio_f$ over a finite field $\Fbb_q$ with $q$ elements (where $q$ is a prime power). These $R$-polynomials are special cases of the $R$-polynomials of Kazhdan and Lusztig~\cite{KL1,KL2}.  %

The following recurrence appears in~\cite[Section~4]{MSLA}. 
\begin{proposition}\label{prop:MSrec}
The polynomials $R_f(q)$, $f\in\Bkn$, may be computed from the following recurrence. %
\begin{theoremlist}[label=\normalfont(\alph*)]
\item\label{item:trivial} If $n=1$ then $R_f(q)=1$.
\item\label{item:MS_fixedpt} If $\fb$ has some fixed points then $R_f(q)=R_{f'}(q)$, where $f'$ is obtained from $f$ by removing all fixed points of $\fb$. %
\item\label{item:MS_i_i+1} If $f(i)=i+1$ or $f(i+1)=i+n$ (where $n\geq2$) then $fs_i, s_i f\in \Bkn$ and $R_f(q)=(q-1)R_{s_if}(q)=(q-1)R_{fs_i}(q)$. 
\item\label{item:ceq} If $f\ceq g$ then $R_f(q)=R_g(q)$.
\item\label{item:doublecross} If $f$ has a double crossing at $i\in\Z$ then 
\begin{equation}\label{eq:MSrec:double_cross}
  R_{s_ifs_i}(q)=(q-1)R_{s_if}(q)+qR_f(q).
\end{equation}
\end{theoremlist}
\end{proposition}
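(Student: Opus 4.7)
The plan is to verify the five parts in sequence, each describing how the $\F_q$-point count $R_f(q) = \#\Pio_f(\F_q)$ transforms under a simple operation on the bounded affine permutation $f$. Parts (a)--(d) follow from fairly direct geometric manipulations of $\Pio_f$, while part (e) carries the main substance.

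For part (a), the Grassmannian $\Gr(k,1)$ is either a point (if $k\in\{0,1\}$) or empty, so $R_f(q)=1$ immediately. For part (b), a fixed point of $\bar f$ in $S_n$ corresponds either to a loop ($f(i)=i$, meaning the $i$-th column vanishes on $\Pio_f$) or to a coloop ($f(i)=i+n$, meaning the $i$-th column is a pivot). Either way, deleting or fixing this distinguished column induces a scheme-theoretic isomorphism $\Pio_f \simeq \Pio_{f'}$ over $\Z$, so the $\F_q$-point counts agree.

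For part (c), the hypothesis $f(i)=i+1$ or $f(i+1)=i+n$ singles out a \emph{simple crossing} at position $i$, and the corresponding forgetful projection realizes $\Pio_f$ as a $\Cast$-torsor over $\Pio_{s_i f}$ or $\Pio_{f s_i}$; the two equalities in the assertion then reflect the symmetry of this torsor structure. Part (d) asserts that length-preserving simple conjugation preserves the $R$-polynomial: each such conjugation is induced by an explicit biregular automorphism of $\Gr(k,n)$ arising from a row/column swap on matrix representatives, which restricts to an isomorphism between the two positroid varieties and hence preserves the point count.

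The essential content is part (e). Given a double crossing of $f$ at $i$, I would stratify $\Pio_{s_i f s_i}$ into two locally closed pieces parametrizing the two resolutions of the double crossing at columns $i$ and $i+1$. One piece forms a $\Cast$-bundle over $\Pio_{s_i f}$, contributing $(q-1)R_{s_if}(q)$, while the other forms an affine-line bundle over $\Pio_f$, contributing $q R_f(q)$. Summing the two contributions yields~\eqref{eq:MSrec:double_cross}; this parallels the classical Kazhdan--Lusztig $R$-polynomial recurrence, now in the affine/positroid setting. The main obstacle is making the stratification rigorous: one must show that in the matrix model of $\Pio_{s_ifs_i}$, the two possibilities for the columns $(i,i+1)$ yield precisely the $\Cast$-bundle and affine-line bundle described, with no overcounting or missing locus. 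The other parts are routine bookkeeping once one fixes a clean dictionary between bounded affine permutations and the matrix description of $\Pio_f$.
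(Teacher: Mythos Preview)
Your treatment of (a), (b), and (c) is fine and agrees with the paper's; the $\C^*$-torsor description in (c) is a clean direct alternative to the Kazhdan--Lusztig route the paper takes there.

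The genuine gap is in (d). Swapping columns $i$ and $i+1$ is an automorphism of $\Gr(k,n)$, but it does \emph{not} send $\Pio_f$ to $\Pio_{s_ifs_i}$; in fact it does not send $\Pio_f$ into any single positroid cell. In the length-preserving situation $a<b<i<i+1<c<d$ with $f(a)=i+1$, write $v_a=\sum_{l=a+1}^{i+1}c_lv_l$; the condition $f(a)=i+1$ forces $c_{i+1}\neq 0$ but leaves $c_i$ unconstrained. After swapping columns $i,i+1$, the new value of the bounded affine permutation at $a$ is $i$ if $c_i=0$ and $i+1$ if $c_i\neq 0$, so the image of $\Pio_f$ meets at least two positroid cells. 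No automorphism of $\Gr(k,n)$ in $\PGL_n$ will do this job, because the positroid stratification is tied to a fixed cyclic order of the columns.

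This same obstruction undermines your plan for (e): the ``matrix model'' stratification of $\Pio_{s_ifs_i}$ by the two possibilities for columns $(i,i+1)$ is not the correct one, for the same reason. The decomposition you want exists, but it lives naturally on the flag variety, not on $\Gr(k,n)$. The paper avoids this by passing through the identification $R_f(q)=\KLR_v^w$ with $f=w\tau_{k,n}v^{-1}$ (from \cite{KLS}) and invoking the classical recurrence~\eqref{eq:KLR_dfn}: the case $sv<v,\ sw<w$ gives (d), and the case $sv>v,\ sw<w$ gives both (c) and (e), for $1\le i\le n-1$. The remaining index $i=0$ is handled by the cyclic shift isomorphism $\Pio_f\simeq\Pio_{\shf}$, which you do not mention. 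Finally, you also omit the termination of the recurrence; the paper outsources that to the algorithm in \cite[Theorem~3.3]{MSLA}.
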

\begin{proof}
The results of \cite{MSLA} are formulated in the language of cluster algebras. For the convenience of the reader, we give an alternative proof of \itemref{item:trivial}--\itemref{item:doublecross} not relying on cluster algebras,  assuming familiarity with \cite{KLS}.  We start by noting that the definition $R_f(q):=\#\Pio_f(\Fbb_q)$ implies that $R_f(q)=R_{\sigma f}(q)$ since the open positroid varieties indexed by $f$ and by $\shf$ are isomorphic.

The initial condition~\itemref{item:trivial} is trivial.  Property~\itemref{item:MS_fixedpt} %
follows from the definition of the open positroid variety $\Pio_f$.  If $f(i) = i$ (resp., $f(i) = i+n$) then $\Pio_f$ maps isomorphically to another open positroid variety $\Pio_{f'}$ under the natural projection map $\Gr(k,n) \to \Gr(k,n-1)$ (resp., $\Gr(k,n) \to \Gr(k-1,n-1)$) between Grassmannians that removes (resp., contracts) the $i$-th column. See e.g.~\cite[Lemmas~7.8 and~7.9]{LamCDM_tnn}.

The Kazhdan--Lusztig $R$-polynomials $\KLR_v^w$ are indexed by pairs $(v,w)$ of permutations.  When $v\not\leq w$ (where $\leq$ denotes the Bruhat order on $S_n$), we have $\KLR_v^w=0$, and for $v=w$, we have $\KLR_v^w=1$. For $v\leq w\in S_n$, $\KLR_v^w$ can then be computed by a recurrence relation~\cite[Section~2]{KL1}:
\begin{equation}\label{eq:KLR_dfn}
\KLR_v^w=
  \begin{cases}
    \KLR_{sv}^{sw}, &\text{if $sv<v$ and $sw<w$,}\\
    (q-1)\KLR_{sv}^w+q\KLR_{sv}^{sw}, &\text{if $sv>v$ and $sw<w$.}\\
  \end{cases}\\
\end{equation}
Here, $s=s_i$ for some $1\leq i\leq n-1$ is a simple transposition satisfying $sw<w$. 

For each $f \in \Bkn$, there is a pair $(v,w)$ such that $f = w \tau_{k,n} v^{-1}$  and $R_f(q) = \KLR_v^w$, where $\tau_{k,n}\in\tS_n$ denotes a certain \emph{translation element}; see~\cite[Proposition~3.15]{KLS}.  From this, \eqref{eq:KLR_dfn} implies \itemref{item:ceq}--\itemref{item:doublecross} whenever we have a length-preserving simple conjugation or a double crossing at $1 \leq i \leq n-1$.  Applying the cyclic shift, we see that properties~\itemref{item:ceq}--\itemref{item:doublecross} hold also for $i = 0$, which completes their proof.

Property~\itemref{item:MS_i_i+1} for $1 \leq i \leq n-1$ also follows from \eqref{eq:KLR_dfn}.  If $f(i)=i+1$ or $f(i+1)=i+n$ then $\ell(s_if) = \ell(fs_i) = \ell(f) + 1$, and this corresponds to the case $\KLR_v^w = (q-1)\KLR_{sv}^w$; here, $\KLR_{sv}^{sw} = 0$ since $sv \not \leq sw$.  In the remaining case $i=0$,  \itemref{item:MS_i_i+1} follows from applying the cyclic shift.

Finally, a constructive algorithm to compute $R_f(q)$ from~\itemref{item:trivial}--\itemref{item:doublecross} is given in the proof of~\cite[Theorem~3.3]{MSLA}.
\end{proof}

\subsection{Positroid Catalan numbers}\label{sec:pos_cat_nums}
Recall that for a permutation $\perm\in \Sn$, we let $\ncyc(\perm)=\ncyc(f)$ denote its number of cycles.  For $f\in\Bkn$, we let 
\begin{equation}\label{eq:Rt_dfn}
  \Rt_f(q):=R_f(q)/(q-1)^{n-\ncyc(f)}.
\end{equation}
It is easy to see (for example using~\eqref{eq:MSrec:double_cross_Rt} below; see also~\cite[Proposition~4.5]{qtcat}) that $\Rt_f(q)$ is always a polynomial in $q$.

The definition of a positroid Catalan number $\Cat_f$ (Definition~\ref{defn:intro}) can be extended to all $f \in \Bkn$ by setting
\begin{equation}\label{eq:Catf}
  \Cat_f:=\Rt_f(1).
\end{equation}
The relation to Definition~\ref{defn:intro} is given in~\cref{sec:Euler}. See \cref{fig:Rt} for examples. 

\begin{figure}
  \includegraphics[width=1.0\textwidth]{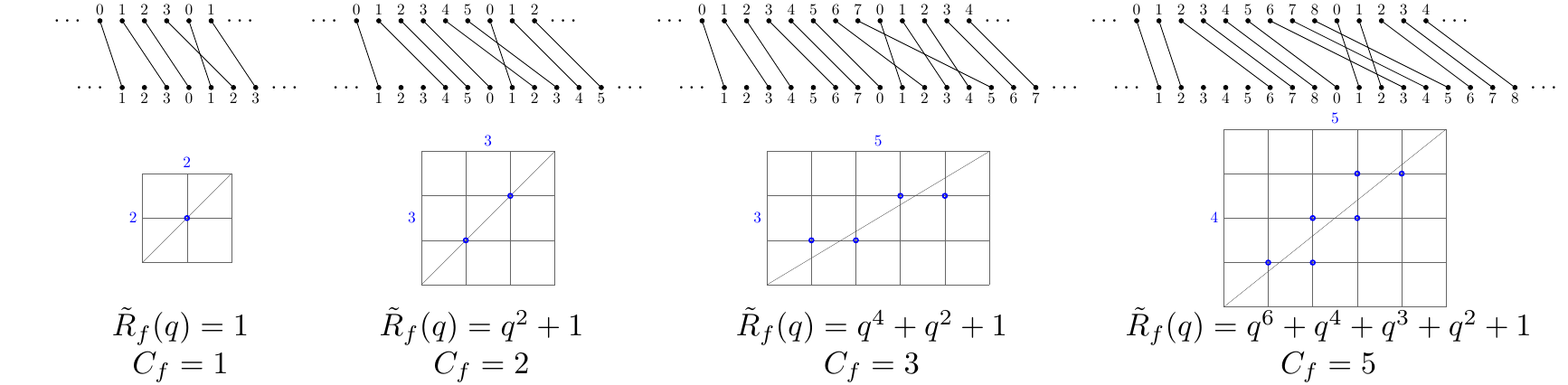}
  \caption{\label{fig:Rt} Some examples of $\Rt_f(q)$ and $\Cat_f$.}
\end{figure}

\subsection{Recurrence for positroid Catalan numbers}\label{sec:recurs-positr-catal}
If $f$ has a double crossing at $i\in\Z$ then~\eqref{eq:MSrec:double_cross} implies
\begin{equation}\label{eq:MSrec:double_cross_Rt}
  \Rt_{s_ifs_i}(q)=
  \begin{cases}
    \Rt_{s_if}(q)+q\Rt_f(q), &\text{if $i,i+1$ belong to the same cycle of $\fb$;}\\
    (q-1)^2\Rt_{s_if}(q)+q\Rt_f(q), &\text{if $i,i+1$ belong to different cycles of $\fb$.}\\
  \end{cases}
\end{equation}
Here, $i,i+1$ are considered modulo $n$.   

The next result follows from~\cref{prop:MSrec} combined with~\eqref{eq:Catf}--\eqref{eq:MSrec:double_cross_Rt}.
\begin{proposition}\label{prop:Cat_rec} The positroid Catalan numbers $\Cat_f$, $f\in\Bkn$, may be computed from the following recurrence.
\begin{theoremlist}[label=\normalfont(\alph*$'$)]
\item\label{item:Cat_trivial} If $n=1$ then $\Cat_f=1$.
\item  \label{item:Cat_MS_fixedpt} If $\fb$ has some fixed points then $\Cat_f=\Cat_{f'}$, where $f'$ is obtained from $f$ by removing all fixed points of $\fb$. %
\item\label{item:Cat_MS_i_i+1} If $f(i)=i+1$ or $f(i+1)=i+n$ (where $n\geq2$) then $fs_i, s_i f\in \Bkn$ and $\Cat_f=\Cat_{s_if}=\Cat_{fs_i}$.
\item If $f\ceq g$  then $\Cat_f=\Cat_g$.
\item\label{item:Cat_doublecross} If $f\in\Bkn$ has a double crossing at $i\in\Z$ then 
\begin{equation}\label{eq:Cat_rec}
    \Cat_{s_ifs_i}=
  \begin{cases}
    \Cat_{s_if}+\Cat_f, &\text{if $i,i+1$ belong to the same cycle of $\fb$;}\\
    \Cat_f, &\text{if $i,i+1$ belong to different cycles of $\fb$.}\\
  \end{cases}
\end{equation}
\end{theoremlist}
\end{proposition}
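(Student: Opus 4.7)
The plan is to derive each item of \cref{prop:Cat_rec} by dividing the corresponding $R$-polynomial identity from \cref{prop:MSrec} by an appropriate power of $(q-1)$ to convert it into an identity for $\Rt_f$, and then specializing to $q = 1$. The key bookkeeping is tracking how the exponent $n - \ncyc(f)$ in the denominator of $\Rt_f(q) = R_f(q)/(q-1)^{n-\ncyc(f)}$ changes under each operation, and matching this change against the explicit factors of $(q-1)$ appearing in \cref{prop:MSrec}.

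For \itemref{item:Cat_trivial}, when $n=1$ the permutation $\fb$ is the identity, so $\ncyc(f)=1$ and $\Rt_f(q) = R_f(q) = 1$. For \itemref{item:Cat_MS_fixedpt}, removing a fixed point of $\fb$ decreases both $n$ and $\ncyc(f)$ by $1$, so the normalizing exponent $n - \ncyc(f)$ is unchanged; combined with \cref{prop:MSrec}\itemref{item:MS_fixedpt}, this yields $\Rt_f(q) = \Rt_{f'}(q)$ as polynomials and hence equality at $q=1$. For \itemref{item:Cat_MS_i_i+1}, the hypothesis $f(i) = i+1$ (respectively $f(i+1)=i+n$, handled identically after a cyclic shift) implies that left multiplication by $s_i$ splits the cycle containing $i$ and $i+1$ into two pieces, creating a new fixed point; thus $\ncyc(s_if) = \ncyc(f)+1$, and the single factor of $(q-1)$ provided by \cref{prop:MSrec}\itemref{item:MS_i_i+1} exactly absorbs this shift, giving $\Rt_f(q) = \Rt_{s_if}(q)$. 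The same analysis applies to $\Rt_{fs_i}$. Item~(d$'$) is immediate from \cref{prop:MSrec}\itemref{item:ceq} since $\ncyc$ is invariant under conjugation.

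The main step is \itemref{item:Cat_doublecross}, which is precisely \eqref{eq:MSrec:double_cross_Rt} evaluated at $q=1$; thus it suffices to verify \eqref{eq:MSrec:double_cross_Rt} itself. For this, note that $\ncyc(s_i f s_i) = \ncyc(f)$ because conjugation preserves cycle type, while $\ncyc(s_if)$ equals $\ncyc(f)+1$ if $i, i+1$ lie in a common cycle of $\fb$ and $\ncyc(f)-1$ otherwise. Dividing the identity $R_{s_ifs_i}(q) = (q-1)R_{s_if}(q) + qR_f(q)$ from \cref{prop:MSrec}\itemref{item:doublecross} by $(q-1)^{n-\ncyc(f)}$ and regrouping the $(q-1)$ factors according to these two cases produces the two cases of \eqref{eq:MSrec:double_cross_Rt}. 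Specializing $q=1$ then makes the $(q-1)^2$ term in the ``different cycles'' case vanish, yielding the bifurcated formula \eqref{eq:Cat_rec}.

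I do not anticipate any serious obstacle, since the argument is purely a matter of bookkeeping the exponents of $(q-1)$. The only detail requiring any care is confirming the cycle-splitting/merging count in each case; this is a standard fact about multiplication by an adjacent transposition, and in the bounded-affine setting it is enough to check it modulo $n$ since $\ncyc(f) = \ncyc(\fb)$ by definition.
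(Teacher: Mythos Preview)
Your proposal is correct and follows essentially the same approach as the paper, which simply states that the result follows from \cref{prop:MSrec} together with~\eqref{eq:Catf}--\eqref{eq:MSrec:double_cross_Rt}. You have spelled out the bookkeeping of the exponent $n-\ncyc(f)$ in each case (including the derivation of~\eqref{eq:MSrec:double_cross_Rt}) that the paper leaves implicit, but the underlying argument is identical.
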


\begin{proposition}
Let $f\in\Bkn$. Then $\Cat_f$ is a positive integer. 
\end{proposition}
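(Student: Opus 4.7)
The plan is to prove this by induction using the recurrence of \cref{prop:Cat_rec}, with the base case $n=1$ giving $\Cat_f = 1$ directly from \cref{prop:Cat_rec}\ref{item:Cat_trivial}. Integrality is immediate in the inductive step, since every identity in \cref{prop:Cat_rec} sends integer inputs to integer outputs, so by induction on $n+\ell(f)$ one has $\Cat_f \in \Z$.

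For positivity, I would induct on $n+\ell(f)$ and argue by cases. If $\bar f$ has a fixed point, apply (b') to reduce $n$. If instead there is some $i$ with $f(i) = i+1$ or $f(i+1) = i+n$, then applying (c') produces a fixed point in $s_i f$ (or $f s_i$): indeed $(s_i f)(i) = s_i(i+1) = i$ in the first case and $(s_if)(i+1) \equiv i+1 \pmod n$ in the second, reducing to the previous case after one more step. When neither reduction applies and $f \in \Bknc$, I would use \cref{prop:structure}\ref{item:reduce} to reach a minimum-length element $f_0$ by a sequence of double moves and c-equivalences; along such a path the double-move identity (e') yields $\Cat_{s_ igs_i} \geq \Cat_g$ at each step, using non-negativity of $\Cat_{s_ig}$ from the inductive hypothesis. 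The remaining case $f \in \Bkn\setminus\Bknc$ would be handled by treating each cycle of $\bar f$ separately and using (b') to peel off fixed points after suitable reductions.

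The main obstacle will be strict positivity $\Cat_f \geq 1$. Reading the same-cycle case of (e') as a formula for $\Cat_f$ in terms of smaller-length data produces $\Cat_f = \Cat_{s_ifs_i} - \Cat_{s_if}$, and this subtraction makes strict positivity delicate to extract from the recurrence alone. To circumvent this I would appeal to the combinatorial interpretation (a) stated in the introduction, namely that $\Cat_f$ equals the number of maximal $f$-Deograms from \cite{qtcat}: as a cardinality this is manifestly a non-negative integer, and positivity then reduces to exhibiting at least one maximal $f$-Deogram for every $f \in \Bkn$, which can be produced by a standard construction built from a reduced expression associated to the minimum-length representative guaranteed by \cref{prop:structure}.
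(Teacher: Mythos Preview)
Your plan ultimately works, but takes a detour the paper avoids. You correctly identify that inducting toward \emph{smaller} length via double moves gives only $\Cat_{s_igs_i}\geq\Cat_g$, which bounds $\Cat_f$ from above rather than below, and so does not yield positivity. Your fallback through Deograms is a valid escape---and indeed the paper cites \cite[Remark~9.4 and Proposition~9.5]{qtcat} as an alternative---though your sketch of producing a maximal Deogram ``from a reduced expression associated to the minimum-length representative'' is vague; what actually works is simply that the set of $f$-Deograms is nonempty whenever $v\leq w$, so maximal ones exist automatically.

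The paper's own argument is much cleaner: it reverses the direction of the length induction. Citing the algorithm of \cite[Theorem~3.3]{MSLA}, it observes that \cref{prop:Cat_rec} expresses $\Cat_f$ in terms of $\Cat_g$ with either $\n(g)<\n(f)$, or $\n(g)=\n(f)$ and $\ell(g)>\ell(f)$. Concretely, if one finds $i$ with $f':=s_ifs_i$ satisfying $\ell(f')=\ell(f)+2$, then $f'$ has a double crossing at $i$ with $s_if's_i=f$, and reading (e$'$) for $f'$ gives $\Cat_f=\Cat_{s_if'}+\Cat_{f'}$ (or $\Cat_f=\Cat_{f'}$ in the different-cycle case), with every term on the right of strictly larger length. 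Since length is bounded above in $\Bkn$, this terminates at elements to which (b$'$) or (c$'$) applies, and the entire recurrence is manifestly subtraction-free. Positivity then comes for free alongside integrality---no Deograms, no separate treatment of $f\in\Bkn\setminus\Bknc$.
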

\begin{proof}
The proof of~\cite[Theorem~3.3]{MSLA} shows that $\Cat_f$ may be expressed using~\itemref{item:Cat_trivial}--\itemref{item:Cat_doublecross} in terms of $\Cat_g$ for bounded affine permutations $g$ satisfying either $\n(g)<\n(f)$ or $\n(g)=\n(f)$ and $\ell(g)>\ell(f)$. In particular, the recurrence in \cref{prop:Cat_rec} is subtraction-free, which shows the result. See also~\cite[Remark~9.4 and Proposition~9.5]{qtcat}.
\end{proof}
\begin{remark}
It is not always true that $\Rt_f(q)$ has positive coefficients: see~\cite[Example~4.22]{qtcat}. This question is closely related to the \emph{odd cohomology vanishing} phenomenon which appears for $\gcd(k,n)=1$ and $f=\fkn$ (i.e., for torus knots) but not for all $f\in\Bknc$. 
It is an important open problem to describe a wider class of positroids (or more generally, knots) for which this phenomenon occurs. We expect this class to contain all $f\in\Bknc$ which are \repfree; see \cref{conj:GHSR}.
\end{remark}

Let $f\in\Bkn$ be such that $\fb=(a^\parr1_1\cdots a^\parr1_{n_1})(a^\parr2_1\cdots a^\parr2_{n_2})\cdots(a^\parr r_1\cdots a^\parr r_{n_r})$ is a product of $r$ cycles. (The case $r=2$ was considered in \cref{sec:intro:rep-free}.) For each $j\in[r]$, denote by $f|_{S_j}\in\BND(k_j,n_j)$ the restriction of $f$ to the set $S_j$ of all integers congruent to one of $a^\parr j_1,\dots,a^\parr j_{n_j}$ modulo $n$. We deduce the following \emph{decoupling property} from \cref{prop:Cat_rec}.

\begin{corollary}[Decoupling]\label{cor:decoupling}
Let $f\in\Bkn$ and $i\in\Z$. If $i$ and $i+1$ belong to different cycles of $\fb$ then $s_ifs_i\in\Bkn$ and
\begin{equation}\label{eq:Cat_f=Cat_sifsi}
  \Cat_f=\Cat_{s_ifs_i}.
\end{equation}
In particular, in the above notation, for $f\in\Bkn$ we have
\begin{equation}\label{eq:Cat_f=product}
  \Cat_f=\prod_{j=1}^r \Cat_{f|_{S_j}}.
\end{equation}
\end{corollary}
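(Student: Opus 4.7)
The proof has two parts corresponding to the two displayed equations. For \eqref{eq:Cat_f=Cat_sifsi}, I would first verify $s_i f s_i\in\Bkn$ by a direct check: evaluating $(s_i f s_i)(j)$ for $j\equiv i,i+1 \pmod n$ shows that the only obstructions to the bounded condition $j\leq(s_ifs_i)(j)\leq j+n$ are $f(i+1)=i+n$ (forcing $\fb(i+1)=i$) or $f(i)=i+1$ (forcing $\fb(i)=i+1$), both of which place $i$ and $i+1$ in the same cycle of $\fb$, contradicting our hypothesis. To then obtain $\Cat_f=\Cat_{s_i f s_i}$, I would case-split on $\Delta:=\ell(s_i f s_i)-\ell(f)\in\{-2,0,2\}$. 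When $\Delta=-2$, the resulting descent pattern, together with the absence of fixed points at $i$ and $i+1$, yields a double crossing of $f$ at $i$, and the ``different cycles'' branch of \cref{prop:Cat_rec}\itemref{item:Cat_doublecross} gives $\Cat_{s_ifs_i}=\Cat_f$. The case $\Delta=+2$ is symmetric, applied to $g:=s_i f s_i$ with $s_i g s_i=f$. For $\Delta=0$ we have a length-preserving simple conjugation $f\ceq s_ifs_i$, so item (d$'$) of \cref{prop:Cat_rec} applies.

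The fixed-point edge cases (where for example $f(i)=i$) are handled by reducing first via \cref{prop:Cat_rec}\itemref{item:Cat_MS_fixedpt}. If $f(i)=i$, then $(s_i f s_i)(i+1)=s_i(f(i))=s_i(i)=i+1$, so $s_ifs_i$ also has a fixed point; after removing these fixed points from both permutations, the resulting reductions coincide (since $s_i$-conjugation merely swaps the labels of $i$ and $i+1$, which are precisely the positions removed), and \eqref{eq:Cat_f=Cat_sifsi} follows by induction on $n$. The configurations $f(i)=i+n$, $f(i+1)=i+1$, and $f(i+1)=i+1+n$ are handled analogously.

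For the product formula \eqref{eq:Cat_f=product}, I would induct on $r$, the case $r=1$ being vacuous. For $r\geq 2$, iterating \eqref{eq:Cat_f=Cat_sifsi} together with cyclic shifts (both of which preserve $\Cat$) lets me conjugate $f$ into a \emph{separated form} $f^\star$ in which the cycles $C_1,\ldots,C_r$ reduce modulo $n$ to disjoint contiguous cyclic intervals of $[n]$; termination follows from a bubble-sort argument counting cross-block inversions. A secondary induction on $\ell(f^\star)$ using \cref{prop:Cat_rec} then gives $\Cat_{f^\star}=\prod_j\Cat_{f^\star|_{S_j}}$ for separated $f^\star$: in the separated form, each recurrence operation acts within a single block (a double crossing at $i$ forces $i,i+1$ into the same cycle and hence the same block, and fixed-point removals are intrinsically within a block), so the recurrence factors over the blocks.

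The main obstacle I anticipate is the final step of Part 2, namely verifying rigorously that the \cref{prop:Cat_rec} operations on a separated permutation all respect the block decomposition. This requires careful bookkeeping of how double crossings, fixed points, and adjacent-swap transpositions interact with the contiguous cyclic interval structure, though no tools beyond \cref{prop:Cat_rec} itself should be needed.
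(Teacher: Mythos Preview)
Your proposal is correct and follows essentially the same approach as the paper's proof. The paper's argument is extremely terse: for \eqref{eq:Cat_f=Cat_sifsi} it simply says ``follows easily from \cref{prop:Cat_rec},'' and for \eqref{eq:Cat_f=product} it says to apply \eqref{eq:Cat_f=Cat_sifsi} repeatedly until each cycle of $\fb$ is supported on a cyclically consecutive interval, after which $\Cat_f$ may be computed via \cref{prop:Cat_rec} independently on each interval. Your write-up fills in exactly the case analysis on $\Delta=\ell(s_ifs_i)-\ell(f)$, the fixed-point edge cases, and the bubble-sort/separation argument that the paper leaves implicit. One minor remark: the cyclic shifts you invoke are not actually needed (the paper does not use them); once you fix a target ordering of the blocks and cut the cycle at a block boundary, linear bubble sort using only the $s_i$-conjugations with $c(i)\neq c(i+1)$ already terminates.
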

\begin{proof}
Eq.~\eqref{eq:Cat_f=Cat_sifsi} follows easily from \cref{prop:Cat_rec}. 
To deduce~\eqref{eq:Cat_f=product}, we apply~\eqref{eq:Cat_f=Cat_sifsi} repeatedly until each cycle of $\fb$ is supported on a cyclically consecutive interval $[a,b]\subset [n]$ for some $a,b\in[n]$. After that, $\Cat_f$ may be computed via \cref{prop:Cat_rec} independently on each interval, which results in the product formula~\eqref{eq:Cat_f=product}.
\end{proof}

We will use a special case of~\eqref{eq:Cat_f=product} when $r=2$.   

\begin{corollary}\label{cor:Cat_rec}
Suppose that $f\in\Bknc$ has a double crossing at $i\in[n]$. Then
\begin{equation}\label{eq:recurrence_early}
  \Cat_{s_ifs_i}=\Cat_{\fii_1}\Cat_{\fii_2}+\Cat_f.
\end{equation}
\end{corollary}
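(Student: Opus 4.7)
The statement combines two earlier results: the recurrence in Proposition~\ref{prop:Cat_rec}\itemref{item:Cat_doublecross} applied to the double crossing at $i$, and the decoupling property of Corollary~\ref{cor:decoupling}. Since $f \in \Bknc$, the permutation $\bar f$ is an $n$-cycle, and in particular $i$ and $i+1$ lie in the same cycle of $\bar f$. Thus the first case of~\eqref{eq:Cat_rec} applies, giving
$$
\Cat_{s_i f s_i} \;=\; \Cat_{s_i f} \;+\; \Cat_f.
$$
The task therefore reduces to identifying $\Cat_{s_i f}$ with the product $\Cat_{\fii_1}\Cat_{\fii_2}$.

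The plan is to pass from $s_i f$ to $f s_i = \fii$ via a length-preserving simple conjugation, and then apply decoupling to $\fii$. First, since $f$ has a double crossing at $i$, both $s_i f$ and $f s_i$ lie in $\Bkn$ and have length $\ell(f)-1$; moreover $s_i(fs_i)s_i = s_i f$, so this is a length-preserving simple conjugation in the sense of \cref{sec:conj-double-move}. Property~(d$'$) of Proposition~\ref{prop:Cat_rec} then yields $\Cat_{s_i f} = \Cat_{f s_i} = \Cat_{\fii}$.

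Next, because multiplying an $n$-cycle on the right by the simple transposition $s_i$ splits that cycle (recall $i,i+1$ lie in the same cycle of $\bar f$), the reduced permutation $\overline{\fii}$ is a product of exactly two cycles, whose restrictions are by definition $\fii_1$ and $\fii_2$ (as laid out in \cref{sec:intro:rep-free}). Applying the decoupling formula~\eqref{eq:Cat_f=product} of Corollary~\ref{cor:decoupling} with $r=2$ gives
$$
\Cat_{\fii} \;=\; \Cat_{\fii_1}\,\Cat_{\fii_2}.
$$
Combining the three identities yields~\eqref{eq:recurrence_early}.

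There is no real obstacle here; the only care needed is in book-keeping. One should verify that the passage from $s_i f$ to $f s_i$ is a genuine \Ceqvce (both are in $\Bkn$ and have the same length, which follows immediately from the double-crossing hypothesis), and that the restrictions $(\fii)|_{S_j}$ used in~\eqref{eq:Cat_f=product} coincide with the permutations $\fii_j$ defined via the order-preserving bijections in \cref{sec:intro:rep-free}. Both checks are routine from the definitions, so the proof is essentially a one-line assembly of the prior recurrence and decoupling results.
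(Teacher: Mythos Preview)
Your proof is correct and matches the paper's intended argument, which is left implicit (the paper only says ``We will use a special case of~\eqref{eq:Cat_f=product} when $r=2$''). You have correctly identified the three ingredients: the first case of~\eqref{eq:Cat_rec} applies because $\bar f$ is a single cycle; the passage $s_if\ceq fs_i$ is a genuine length-preserving simple conjugation since both lie in $\Bkn$ with length $\ell(f)-1$ by the double-crossing hypothesis; and $fs_i=\fii$ decouples via~\eqref{eq:Cat_f=product} into exactly $\Cat_{\fii_1}\Cat_{\fii_2}$.
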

\noindent Our eventual goal will be to relate~\eqref{eq:recurrence_early} to the recurrence for Dyck paths shown in \cref{fig:recurrence}.  One other simple result we will need is the cyclic shift invariance of $\Cat_f$ and $\Fr(f)$. 
\begin{proposition}\label{prop:cyc_shift_iso}
For any $f\in\Bkn$, we have 
\begin{equation*}%
 \Fr(f)=\Fr(\shf),\quad \Cat_f=\Cat_{\shf}, \quad\text{and}\quad \Rt_f(q)=\Rt_{\shf}(q).
\end{equation*}
\end{proposition}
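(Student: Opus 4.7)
My plan is to treat the three identities separately; the statements about $R_f(q)$, $\Rt_f(q)$, and $\Cat_f$ share a single geometric argument, while $\Fr(f)=\Fr(\sigma f)$ requires a combinatorial verification via a bijection of inversions.

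For $R_f(q)$, and hence for $\Rt_f(q)$ and $\Cat_f$, I would invoke the observation already used implicitly in the proof of \cref{prop:MSrec}: since $\sigma f = f_{1,n} f f_{1,n}^{-1}$ and conjugation by $f_{1,n}$ corresponds to cyclically rotating the $n$ columns of a $k\times n$ matrix, this rotation induces an isomorphism $\Pio_f \xrasim \Pio_{\sigma f}$, giving $R_f(q) = \#\Pio_f(\Fbb_q) = \#\Pio_{\sigma f}(\Fbb_q) = R_{\sigma f}(q)$. Conjugation preserves both $n$ and $\ncyc(f)$, so the denominator $(q-1)^{n-\ncyc(f)}$ in~\eqref{eq:Rt_dfn} is the same on both sides, yielding $\Rt_f(q)=\Rt_{\sigma f}(q)$; setting $q=1$ gives $\Cat_f=\Cat_{\sigma f}$.

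For $\Fr(f)=\Fr(\sigma f)$, I would construct an explicit bijection between inversions of $f$ and inversions of $\sigma f$, then check that paired inversions contribute identical points. From $(\sigma f)(i)=f(i-1)+1$, a pair $(i,j)$ with $i<j$ is an inversion of $f$ iff $(i+1,j+1)$ is an inversion of $\sigma f$; when $i=n$, $n$-periodicity recenters the first coordinate to $[n]$ as $(1,j+1-n)$, and a direct check using $f(0)=f(n)-n$ confirms that the inversion property is preserved. A direct calculation gives the key identity
\begin{equation*}
(\sigma f)^{(i+1,j+1)} = \sigma\bigl(f^{(i,j)}\bigr),
\end{equation*}
so the two cycles of $\overline{(\sigma f)^{(i+1,j+1)}}$ are obtained from the two cycles of $\overline{f^{(i,j)}}$ by cyclic shift in $S_n$. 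In particular, the cycle containing $i+1 \pmod n$ has the same length $n_1$ as the cycle of $\overline{f^{(i,j)}}$ containing $i\pmod n$, so I only need to match the first coordinate $k_1$. After applying the respective order-preserving bijections to $[n_1]$, the two resulting permutations in $\Sc_{n_1}$ differ by a cyclic rotation of $[n_1]$, and the statistic $k$ is invariant under such a rotation (for essentially the same reason that the cyclic shift $\sigma$ preserves each subset $\tSp_n^{(k)}$).

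The main technical obstacle I anticipate is the bookkeeping in the previous step when the support $S_1\subset[n]$ of the relevant cycle contains $n$, so that $S_1+1\pmod n$ wraps around to contain $1$. One must verify carefully that the order-preserving bijection $S_1+1\to[n_1]$ is related to $S_1\to[n_1]$ precisely by a cyclic shift of $[n_1]$, rather than by some more complicated permutation that could spoil the $k$-statistic. Once this is pinned down, invariance of $k$ under a single cyclic rotation of $[n_1]$ (a short calculation of the same flavor as $k(\sigma g)=k(g)$ for bounded affine $g$) finishes the proof.
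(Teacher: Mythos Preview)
Your proposal is correct and follows essentially the same approach as the paper, just with considerably more detail. The paper's proof consists of the single sentence ``It is obvious that both the definition of $\Fr(f)$ and the recurrence in \cref{prop:MSrec,prop:Cat_rec} are invariant under the action of $\sigma$''; your argument for $R_f(q)$ via the isomorphism $\Pio_f\cong\Pio_{\sigma f}$ is exactly the observation already recorded in the proof of \cref{prop:MSrec}, and your bijection-of-inversions argument is the explicit unpacking of ``the definition of $\Fr(f)$ is invariant under $\sigma$'', including the wraparound bookkeeping the paper suppresses.
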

\begin{proof}
It is obvious that both the definition of $\Fr(f)$ and the recurrence in~\cref{prop:MSrec,prop:Cat_rec} are invariant under the action of $\sigma$.
\end{proof}

\section{Big paths}\label{sec:big-paths}
The next few sections contain the main body of the proof of \cref{thm:main}. From now on, we switch from working in the $(k,n-k)$-coordinates to working in the $(k,n)$-coordinates. For $f\in\Bknc$, we let 
\begin{equation}\label{eq:del_dfn}
  \kn(f):=(k,n)
\end{equation}
 and define the multiset $\F(f)$ to be the image of $\Fr(f)$ under the map $(k_1,n_1-k_1)\mapsto (k_1,n_1)$. We let $\Fbf:=\F(f)\sqcup\{(0,0),(k,n)\}.$

Let $f\in\Bknc$. Our goal is to give a geometric interpretation of the multiset $\F(f)$.

\begin{notation}\label{notn:swap}
When referring to points in the plane, we swap their coordinates. 
 For a point $\alpha=(a,b)\in\Z^2$, we denote by $\n(\alpha):=b$ (resp., $\k(\alpha):=a$) its horizontal (resp., vertical) coordinate.
\end{notation}

\begin{figure}
\scalebox{1.0}{

\begin{tabular}{ccc}
\scalebox{0.60}{\begin{tikzpicture}[xscale=0.50, yscale=0.50, baseline=(ZUZU.base)]
	\coordinate(ZUZU) at (0,1.50);
\node[draw,circle,scale=0.30,fill=black] (T0) at (0,3.00) {};
\node[anchor=south,scale=1.00] (LT0) at (0,3.00) {$0$};
\node[draw,circle,scale=0.30,fill=black] (T1) at (1,3.00) {};
\node[anchor=south,scale=1.00] (LT1) at (1,3.00) {$1$};
\node[draw,circle,scale=0.30,fill=black] (T2) at (2,3.00) {};
\node[anchor=south,scale=1.00] (LT2) at (2,3.00) {$2$};
\node[draw,circle,scale=0.30,fill=black] (T3) at (3,3.00) {};
\node[anchor=south,scale=1.00] (LT3) at (3,3.00) {$3$};
\node[draw,circle,scale=0.30,fill=black] (T4) at (4,3.00) {};
\node[anchor=south,scale=1.00] (LT4) at (4,3.00) {$4$};
\node[draw,circle,scale=0.30,fill=black] (T5) at (5,3.00) {};
\node[anchor=south,scale=1.00] (LT5) at (5,3.00) {$5$};
\node[draw,circle,scale=0.30,fill=black] (T6) at (6,3.00) {};
\node[anchor=south,scale=1.00] (LT6) at (6,3.00) {$0$};
\node[draw,circle,scale=0.30,fill=black] (T7) at (7,3.00) {};
\node[anchor=south,scale=1.00] (LT7) at (7,3.00) {$1$};
\node[draw,circle,scale=0.30,fill=black] (T8) at (8,3.00) {};
\node[anchor=south,scale=1.00] (LT8) at (8,3.00) {$2$};
\node[draw,circle,scale=0.30,fill=black] (B3) at (3,0.00) {};
\node[anchor=north,scale=1.00] (LT3) at (3,0.00) {$3$};
\node[draw,circle,scale=0.30,fill=black] (B4) at (4,0.00) {};
\node[anchor=north,scale=1.00] (LT4) at (4,0.00) {$4$};
\node[draw,circle,scale=0.30,fill=black] (B5) at (5,0.00) {};
\node[anchor=north,scale=1.00] (LT5) at (5,0.00) {$5$};
\node[draw,circle,scale=0.30,fill=black] (B6) at (6,0.00) {};
\node[anchor=north,scale=1.00] (LT6) at (6,0.00) {$0$};
\node[draw,circle,scale=0.30,fill=black] (B7) at (7,0.00) {};
\node[anchor=north,scale=1.00] (LT7) at (7,0.00) {$1$};
\node[draw,circle,scale=0.30,fill=black] (B8) at (8,0.00) {};
\node[anchor=north,scale=1.00] (LT8) at (8,0.00) {$2$};
\node[draw,circle,scale=0.30,fill=black] (B9) at (9,0.00) {};
\node[anchor=north,scale=1.00] (LT9) at (9,0.00) {$3$};
\node[draw,circle,scale=0.30,fill=black] (B10) at (10,0.00) {};
\node[anchor=north,scale=1.00] (LT10) at (10,0.00) {$4$};
\node[draw,circle,scale=0.30,fill=black] (B11) at (11,0.00) {};
\node[anchor=north,scale=1.00] (LT11) at (11,0.00) {$5$};
\draw[black, line width=0.50pt] (T0)--(B3);
\draw[black, line width=0.50pt] (T1)--(B4);
\draw[black, line width=0.50pt] (T2)--(B5);
\draw[black, line width=0.50pt] (T3)--(B8);
\draw[black, line width=0.50pt] (T4)--(B6);
\draw[black, line width=0.50pt] (T5)--(B7);
\draw[black, line width=0.50pt] (T6)--(B9);
\draw[black, line width=0.50pt] (T7)--(B10);
\draw[black, line width=0.50pt] (T8)--(B11);
\node[scale=1.50, anchor=west] (DDDTR) at (8.30, 3.00) {$\dots$};
\node[scale=1.50, anchor=east] (DDDTR) at (-0.30, 3.00) {$\dots$};
\node[scale=1.50, anchor=west] (DDDTR) at (11.30, 0.00) {$\dots$};
\node[scale=1.50, anchor=east] (DDDTR) at (2.70, 0.00) {$\dots$};
\end{tikzpicture}
} & $\longrightarrow$ & \scalebox{0.80}{\begin{tikzpicture}[xscale=1.00, yscale=1.00,baseline=(ZUZU.base)]
\coordinate(ZUZU) at (0,1.50);
\draw[black!60,line width=0.3] (0,0) grid (6, 3);
\node[anchor=east,blue] (K) at (0.00,1.50) {$3$};
\node[anchor=south,blue] (N) at (3.00,3.00) {$6$};
\node[brown,scale=0.75,anchor=south east] (FL0) at (0, 0.000000000000000) {$P[0]$};
\node[brown,scale=0.1,draw,fill=brown,circle] (FD0) at (0, 0.000000000000000) {$0$};
\node[brown,scale=0.75,anchor=south east] (FL1) at (1, 0.500000000000000) {$P[3]$};
\node[brown,scale=0.1,draw,fill=brown,circle] (FD1) at (1, 0.500000000000000) {$3$};
\node[brown,scale=0.75,anchor=south east] (FL2) at (2, 1.33333333333333) {$P[2]$};
\node[brown,scale=0.1,draw,fill=brown,circle] (FD2) at (2, 1.33333333333333) {$2$};
\node[brown,scale=0.75,anchor=south east] (FL3) at (3, 1.83333333333333) {$P[5]$};
\node[brown,scale=0.1,draw,fill=brown,circle] (FD3) at (3, 1.83333333333333) {$5$};
\node[brown,scale=0.75,anchor=south east] (FL4) at (4, 2.16666666666667) {$P[1]$};
\node[brown,scale=0.1,draw,fill=brown,circle] (FD4) at (4, 2.16666666666667) {$1$};
\node[brown,scale=0.75,anchor=south east] (FL5) at (5, 2.66666666666667) {$P[4]$};
\node[brown,scale=0.1,draw,fill=brown,circle] (FD5) at (5, 2.66666666666667) {$4$};
\node[brown,scale=0.75,anchor=south east] (FL6) at (6, 3.00000000000000) {$P[n]$};
\node[brown,scale=0.1,draw,fill=brown,circle] (FD6) at (6, 3.00000000000000) {$6$};
\draw[brown, line width=0.5pt] (FD0) -- (FD1);
\draw[brown, line width=0.5pt] (FD1) -- (FD2);
\draw[brown, line width=0.5pt] (FD2) -- (FD3);
\draw[brown, line width=0.5pt] (FD3) -- (FD4);
\draw[brown, line width=0.5pt] (FD4) -- (FD5);
\draw[brown, line width=0.5pt] (FD5) -- (FD6);
\end{tikzpicture}
} \\ \vspace{-0.1in} \\
$\fb=(0,3,2,5,1,4)$ in cycle notation & & $\textcolor{brown}{P=\Pf}$
\end{tabular}

}
  \caption{\label{fig:path} Computing the small path $\Pf$. Its points are labeled according to  \cref{notn:P[*]}.}
\end{figure}
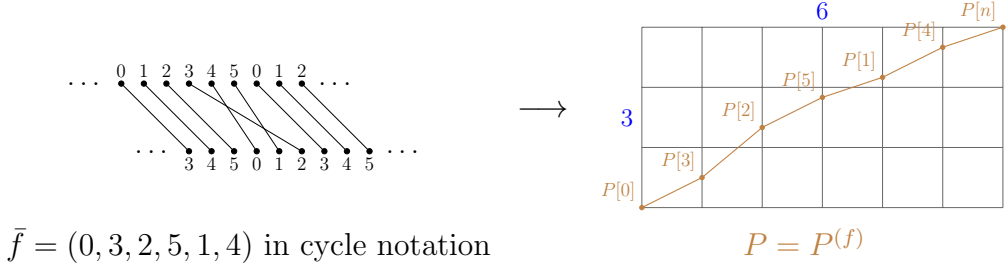

\begin{definition}
The \emph{big path} $\Pfinf$ of $f$ is the path in the plane through the points $\pf_r:=(f^r(0)/n,r)$ for all $r\in\Z$. The \emph{small path} $\Pf$ is the subpath of $\Pfinf$ through the points $\pf_0,\pf_1,\dots,\pf_n$.
\end{definition}
See \cref{fig:path}. We usually drop the superscript and denote $\Pinf:=\Pfinf$. We refer to the points $\pf_r$ for $r\in\Z$ as the \emph{integer points} of $\Pinf$.

Set $\del:=(k,n)$ and choose some $\alpha\in\Z^2$. We will be interested in the \emph{intersection points} of $\Pinf$ with $\Qinf:=\Pinf+\alpha$. First, observe that if $\alpha\in\Z\del$ then $\Pinf=\Qinf$. If $\alpha\notin\Z\del$ then it is easy to see that no integer point of $\Pinf$ belongs to $\Qinf$, and that the set $\Pinf\cap \Qinf$ is invariant under adding multiples of $\del$. We denote by $|\Pinf\cap\Qinf|$ the size of this set  when considered ``modulo $\del$,'' that is, as a subset of the cylinder\footnote{Some of our constructions are most naturally described in terms of the cylinder $\Z^2/\Z\del$. However, we choose to work with the full plane $\Z^2$ since we need to talk about convexity. For example, we will see that the set $\Fbf$  is convex as a subset of the plane but not as a subset of the cylinder.} $\Z^2/\Z\del$. For $l:=|\Pinf\cap\Qinf|$, we say that \emph{$\Pinf$ and $\Qinf$ intersect $l$ times}. The number $l$ is always finite and even. 

\begin{proposition}\label{prop:F_paths}
Let $f\in\Bknc$. Then $f$ is \repfree if and only if for all $\alpha\in\Z^2\setminus\Z\del$, $\Pinf$ and $\Qinf:=\Pinf+\alpha$ intersect at most two times. In this case, we have
\begin{equation*}%
  \F(f)=\{\alpha\in[k-1]\times[n-1]\mid \Pinf\text{ intersects }\Qinf\}.
\end{equation*}
\end{proposition}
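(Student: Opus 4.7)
The plan is to connect $\F(f)$ to the geometry of $\Pinf$ and its translates via the bijection $\phi:\Z\to\Z$, $\phi(r):=f^r(0)$. This map is strictly increasing with $\phi(r+n)=\phi(r)+kn$, and its image $\Ocal_0:=\phi(\Z)$ is the $f$-orbit of $0$. The integers decompose into exactly $k$ orbits $\Ocal_0,\Ocal_1,\dots,\Ocal_{k-1}$ of $f$ (where $\Ocal_c$ is the $f$-orbit of $cn$), cyclically permuted by the shift $x\mapsto x+n$.

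The first step is a local crossing criterion. Since $\alpha\notin\Z\delta$ prevents any kink of $\Pinf$ from landing on $\Pinf+\alpha$, every intersection is an interior crossing of two segments. For $\alpha=(k_1,n_1)$ in the swap convention, the only segments of $\Pinf$ and $\Pinf+\alpha$ that can share a horizontal range are the segment $e_r:=[\pf_r,\pf_{r+1}]$ and the translate $e_{r-n_1}+\alpha$. Setting $a:=f^r(0)$ and $b:=f^{r-n_1}(0)+k_1n$, a short calculation reduces the interior crossing condition to $(a-b)(f(a)-f(b))<0$, i.e., to $\{a,b\}$ being an inversion pair of $f$. Modulo $\Z\delta$, crossings therefore correspond bijectively to unordered inversion pairs $\{a,b\}$ with $a\in\Ocal_0$ and $b\in\Ocal_{k_1\bmod k}$, considered modulo the diagonal shift $(a,b)\mapsto(a+kn,b+kn)$.

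The second step matches this enumeration with the definition of $\F(f)$. Because $\phi$ is strictly increasing, no inversion of $f$ can have both coordinates in the same orbit $\Ocal_c$, so every inversion lies between two distinct orbits. Given an inversion $(i,j)$ with $i\in[n]$, exactly two of the shifts $(i+mn,j+mn)$ yield crossings modulo $\Z\delta$: one for which $i+mn\in\Ocal_0$ (producing some $\alpha$), and one for which $j+mn\in\Ocal_0$ (producing some $\alpha^\star$). Unwinding $\knk(\fbij_1)$ by relabeling the cycle of $\fbij$ containing $i\bmod n$ via an order-preserving bijection to $[n_1]$, I would then verify that $\alpha$ coincides with the assigned $(k_1,n_1)\in\F(f)$; by the symmetric calculation the second representative produces $\alpha^\star=(k-k_1,n-n_1)$ coming from the other cycle $\fbij_2$. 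This simultaneously proves the central symmetry $\mathrm{mult}_\alpha(\F(f))=\mathrm{mult}_{\alpha^\star}(\F(f))$.

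Combining, the number of crossings of $\Pinf$ with $\Pinf+\alpha$ modulo $\Z\delta$ equals $\mathrm{mult}_\alpha(\F(f))+\mathrm{mult}_{\alpha^\star}(\F(f))=2\,\mathrm{mult}_\alpha(\F(f))$. Hence the ``at most two intersections'' condition for every $\alpha\notin\Z\delta$ is equivalent to $\mathrm{mult}_\alpha(\F(f))\le 1$ for every $\alpha$, i.e., to the repetition-freeness of $f$; under this condition the identification $\F(f)=\{\alpha\in[k-1]\times[n-1]:\Pinf\cap(\Pinf+\alpha)\ne\emptyset\}$ follows immediately. The main obstacle is the matching in the middle step: checking that the $\alpha$ read off from the ``$a\in\Ocal_0$'' representative actually coincides with the combinatorial data $(k_1,n_1)$ from the cycle $\fbij_1$ of the resolved permutation. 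This requires carefully tracking how resolving the crossing at $(i,j)$ splits the $n$-cycle $\fb$ into two cycles whose lengths and $\k$-values correspond to the index-difference $r-s$ and the orbit-class of $j$ relative to $i$ read off the big path.
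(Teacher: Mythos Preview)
Your plan is essentially the paper's: set up a correspondence between crossings of $\Pinf$ with $\Pinf+\alpha$ (modulo $\Z\delta$) and inversions of $f$, then verify that the $\alpha$ read off a crossing equals $\delta(\fij_1)$. The paper's bookkeeping differs slightly: rather than decomposing $\Z$ into the $k$ orbits of $f$, it orients each intersection as a crossing ``from below'' and bijects those directly with the inversions having $\delta(\fij_1)=\alpha$; this yields the factor $\tfrac12$ immediately, without appealing to central symmetry (which the paper then obtains as a \emph{corollary} of the multiplicity formula, not as an input).

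Two points in your write-up need tightening. First, the bijection in your first step is stated too coarsely: ``inversion pairs $\{a,b\}$ with $a\in\Ocal_0$, $b\in\Ocal_{k_1\bmod k}$'' constrains only $k_1$, not $n_1$; you also need $r-s=n_1$, where $a=\phi(r)$ and $b-k_1n=\phi(s)$ (equivalently, each such pair determines a unique $\alpha$, and you want only those whose $\alpha$ is the given one). Second, the claim that the two-representative picture ``simultaneously proves central symmetry'' does not follow from what you wrote: knowing that each inversion contributes once to the $\alpha$-picture and once to the $\alpha^\star$-picture gives $|\Pinf\cap(\Pinf+\alpha)|=\mathrm{mult}_\alpha(\F(f))+\mathrm{mult}_{\alpha^\star}(\F(f))$, but not $\mathrm{mult}_\alpha=\mathrm{mult}_{\alpha^\star}$; for that you still need a separate observation, e.g.\ the immediate $|\Pinf\cap(\Pinf+\alpha)|=|\Pinf\cap(\Pinf+\delta-\alpha)|$. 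With these fixes, what remains is exactly the matching you flag as the obstacle, and the paper handles that same step with ``it is easy to see''.
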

\begin{proof}
Let $\alpha=(a,b)\neq(0,0)$. If $a\leq0, b\geq0$ or $a\geq0,b\leq0$ then clearly $\Pinf$ does not intersect $\Qinf$. Thus if $\Pinf$ intersects $\Qinf$ then there exists a unique $t\in\Z$ such that $\alpha+t\del\in[k-1]\times[n-1]$. From now on, we assume that $\alpha\in[k-1]\times[n-1]$.

We will prove the more general statement that for all $f\in\Bknc$, the multiplicity of $\alpha$ in the multiset $\F(f)$ is given by $\frac12|\Pinf\cap\Qinf|$. Indeed, suppose that $\Pinf$ crosses $\Qinf$ \emph{from below} at some non-integer point $x$. (That is, $\Pinf$ is below $\Qinf$ when approaching $x$ from the left and above $\Qinf$ when approaching $x$ from the right.)
 Then $x$ belongs to the segment of $\Pinf$ connecting $\pf_r$ to $\pf_{r+1}$ and to the segment of $\Qinf$ connecting $\alpha+\pf_{r-b}$ to $\alpha+\pf_{r-b+1}$, where $\alpha=(a,b)$. Let $i:=f^r(0)$ and $j:=an+f^{r-b}(0)$. Then we have $i<j$ and $f(i)>f(j)$, and thus $(i',j'):=(i-tn,j-tn)$ form an inversion of $f$, where $t\in\Z$ is such that $i-tn\in[n]$. Moreover, it is easy to see that $\kn(f^{(i',j')}_1)=\alpha$, where $\kn(\cdot)$ was defined in~\eqref{eq:del_dfn}.

Conversely, given an inversion $(i,j)$ of $f$ with $\kn(\fij_1)=\alpha$, we may find a (unique modulo $n$) index $r\in\Z$ such that $f^r(0)\equiv i$ modulo $n$, and we can also find a (unique modulo $\del$) shift $\alpha\in\Z^2$ such that $\Qinf$ passes through the point $\pf_r+(\frac{j-i}n,0)$. This shows that the inversions $(i,j)$ of $f$ satisfying $\kn(\fij_1)=\alpha$ are in bijection with the $\frac12|\Pinf\cap\Qinf|$ points where $\Pinf$ crosses $\Qinf$ from below.
\end{proof}

We say that a multiset $\Flet'$ is \emph{centrally symmetric} if for each $\alpha\in[k-1]\times[n-1]$, the multiplicities of $\alpha$ and of $\del-\alpha$ in $\Flet'$ coincide.
\begin{corollary}\label{cor:cs}
For all $f\in\Bknc$, the inversion multiset $\F(f)$ is centrally symmetric.
\end{corollary}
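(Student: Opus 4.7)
The plan is to deduce \cref{cor:cs} from \cref{prop:F_paths} by establishing a translation-invariance of the big path $\Pinf$ under $\del=(k,n)$. The only nontrivial step is the identity $f^n(0)=kn$; the central symmetry then follows formally.

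\emph{$\del$-periodicity of $\Pinf$.} Since $\bar f\in\Snc$ is an $n$-cycle, the residues $f^r(0)\bmod n$ for $r=0,1,\ldots,n-1$ form a permutation of $\{0,1,\ldots,n-1\}$. Writing $f^r(0)=i_r+c_r n$ with $i_r\in\{0,\ldots,n-1\}$, the $n$-periodicity $f(x+n)=f(x)+n$ gives $f(f^r(0))-f^r(0)=f(i_r)-i_r$, so telescoping and re-indexing yields
\begin{equation*}
f^n(0)\;=\;\sum_{r=0}^{n-1}\bigl(f(f^r(0))-f^r(0)\bigr)\;=\;\sum_{i=0}^{n-1}\bigl(f(i)-i\bigr)\;=\;kn,
\end{equation*}
where the last step uses the defining relation of $\tSp_n^{(k)}$ together with the trivial identity $f(n)-n=f(0)$. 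Since $\pf_r=(f^r(0)/n,\,r)$, this gives $\pf_{r+n}=\pf_r+\del$, and hence $\Pinf+\del=\Pinf$ as subsets of the plane.

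\emph{Central symmetry.} For $\alpha\in[k-1]\times[n-1]$, the periodicity rewrites $\Pinf+(\del-\alpha)=\Pinf-\alpha$. Translation by $-\alpha$ is a bijection of $\Z^2$ commuting with $\Z\del$-translation; it carries $\Pinf\cap(\Pinf+\alpha)$ onto $(\Pinf-\alpha)\cap\Pinf=\Pinf\cap(\Pinf+\del-\alpha)$, and therefore descends to a bijection of the corresponding subsets of the cylinder $\Z^2/\Z\del$. Consequently
\begin{equation*}
|\Pinf\cap(\Pinf+\alpha)|\;=\;|\Pinf\cap(\Pinf+\del-\alpha)|.
\end{equation*}
By the formula established in the course of the proof of \cref{prop:F_paths}, the left side is twice the multiplicity of $\alpha$ in $\F(f)$ and the right side is twice the multiplicity of $\del-\alpha$. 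The two multiplicities therefore agree, so $\F(f)$ is centrally symmetric. No substantive obstacle arises beyond the $f^n(0)=kn$ computation, which is a direct consequence of $\bar f$ being an $n$-cycle and the defining property of $\tSp_n^{(k)}$.
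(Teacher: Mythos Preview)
Your proof is correct and follows essentially the same route as the paper: both use the identity $\Pinf+\del=\Pinf$ together with the multiplicity formula $\tfrac12|\Pinf\cap(\Pinf+\alpha)|$ from the proof of \cref{prop:F_paths}, and then a translation argument to conclude $|\Pinf\cap(\Pinf+\alpha)|=|\Pinf\cap(\Pinf+\del-\alpha)|$. You supply the explicit verification that $f^n(0)=kn$ (equivalently $\pf_{r+n}=\pf_r+\del$), which the paper uses without comment; otherwise the arguments coincide. One tiny wording slip: the translation by $-\alpha$ is a bijection of $\R^2$ (the intersection points are not lattice points), not of $\Z^2$, but this does not affect the argument.
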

\begin{proof}
We showed above that the multiplicity of $\alpha$ in $\F(f)$ is given by $\frac12|\Pinf\cap(\Pinf+\alpha)|$. Since $\Pinf=\Pinf+\del$, we find $|\Pinf\cap(\Pinf+\del-\alpha)|=|\Pinf\cap(\Pinf+\alpha)|$, and the result follows.
\end{proof}

We discuss how $\F(f)$ changes under length-preserving simple conjugations and double moves. The following result is immediate.
\begin{lemma}
Let $f\in\Bknc$ be \repfree.
\begin{theoremlist}
\item If $g\ceq f$ then $g$ is \repfree and $\F(f)=\F(g)$.
\item If $f$ has a double crossing at $i\in[n]$ then $f':=s_ifs_i$ is \repfree and
\begin{equation*}%
  \F(f')=\F(f)\setminus \{\kn(\fii_1),\kn(\fii_2)\}.
\end{equation*}
\end{theoremlist}
\end{lemma}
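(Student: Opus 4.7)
I plan to argue both parts directly from the inversion-based definition of $\F(f)$ (cf.\ \cref{dfn:F(f)}), exploiting that $\F$ is built from the cycle decompositions of $\fbij$ for inversions $(j,k)$, both of which are well-behaved under the two moves. For part (i), a length-preserving simple conjugation $f \mapsto g := s_i f s_i$ is an inner automorphism of $\tSp_n$, so it induces a bijection between the inversion sets $\mathrm{Inv}(f)$ and $\mathrm{Inv}(g)$. For matched inversions $(j,k)$ of $f$ and $(j',k')$ of $g$, the permutations $\fbij$ and $\bar g^{(j',k')}$ are themselves conjugate by $s_i$, so their cycle decompositions differ only by swapping the adjacent integers $i$ and $i+1$ inside the cycle supports. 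Since $i$ and $i+1$ are consecutive, this swap leaves the sorted order of each support invariant, and hence the order-preserving bijections to $[n_1]$ and $[n_2]$ produce identical induced permutations on the two cycles. Therefore $\kn(\fij_1) = \kn(g^{(j',k')}_1)$, giving $\F(f) = \F(g)$, and \repfree-ness is inherited.

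For part (ii), I would first identify which inversions are destroyed by the double move. With $a := f^{-1}(i+1)$, $b := f^{-1}(i)$, $c := f(i+1)$, $d := f(i)$, the double-crossing hypothesis reads $a < b < i < i+1 < c < d$. A position-by-position comparison of $f$ and $f' := s_i f s_i$ shows that they differ only at positions congruent to $a, b, i, i+1 \pmod n$, and a short case check confirms that exactly two inversions disappear, namely $(i, i+1)$ and $(a, b)$, while every other inversion of $f$ persists in $f'$. Consequently, $\F(f')$ is obtained from $\F(f)$ by removing the two points $\kn(\fii_1)$ and $\kn(f^{(a,b)}_1)$.

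The main obstacle is to match these two removed points with $\{\kn(\fii_1), \kn(\fii_2)\}$. Writing $\bar f$ in cycle notation with the relevant sub-word $\ldots, a, i+1, c, \ldots, b, i, d, \ldots$, one finds that the two cycles of $\fbii$ have supports $\{b, i, c, \ldots\}$ and $\{a, i+1, d, \ldots\}$, acting cyclically as indicated, while those of $\bar f^{(a,b)}$ have supports $\{a, i, d, \ldots\}$ and $\{b, i+1, c, \ldots\}$. The cycle $\fbii_2$ (containing $i+1$) and the cycle $\bar f^{(a,b)}_1$ (containing $a$) have the same underlying support apart from the exchange $i \leftrightarrow i+1$; since the remaining support elements are by construction distinct from both $i$ and $i+1$, the order-preserving bijection to $[n-k+1]$ sends $i+1$ in the first support and $i$ in the second to the same position, yielding identical induced permutations. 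Hence $\kn(\fii_2) = \kn(f^{(a,b)}_1)$, and by the symmetric argument $\kn(\fii_1) = \kn(f^{(a,b)}_2)$, so the removed pair is precisely $\{\kn(\fii_1), \kn(\fii_2)\}$. Finally, since $f$ is \repfree and $\kn(\fii_1) + \kn(\fii_2) = (k,n)$, the two points must be distinct (otherwise both would equal $(k/2, n/2)$ and appear twice in $\F(f)$), so the multiset difference is well-defined and $f'$ inherits \repfree-ness.
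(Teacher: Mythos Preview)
The paper asserts the lemma is ``immediate'' and gives no argument; in context (placed directly after \cref{prop:F_paths}) the intended proof is almost certainly via the big-path picture, where a simple conjugation or a double move perturbs $\Pinf$ only locally near two intersection points, making the effect on each $|\Pinf\cap(\Pinf+\alpha)|$ transparent.

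Your direct approach via \cref{dfn:F(f)} is viable but contains a gap that recurs in both parts. In part~(i) you assert that conjugation by $s_i$ ``induces a bijection between the inversion sets''. It does not: in the configuration of \figref{fig:moves}(left), the pair $(i,i+1)$ is an inversion of $f$ but its conjugate image (again $(i,i+1)$) is not an inversion of $g=s_ifs_i$; instead the new inversion $(a,b)$ appears in $g$. More generally, for any $j<i$ with $c<f(j)<d$ the inversion $(j,i+1)$ of $f$ is replaced by $(j,i)$ in $g$, and similarly on the right of $i+1$. In part~(ii) the same phenomenon invalidates your claim that ``every other inversion of $f$ persists in $f'$'': these inversions swap between $(j,i)$ and $(j,i+1)$ rather than persist literally.

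The fix in all of these cases is exactly the tool you already introduced: the swapped pairs differ only by exchanging $i\leftrightarrow i+1$ in one cycle's support, and since $i,i+1$ are consecutive integers with no other support element between them, the order-preserving relabeling yields identical induced permutations, hence equal $\kn$-values. So your argument is salvageable; you just need to apply the $i\leftrightarrow i+1$ observation to \emph{all} inversions touching a position in $\{i,i+1\}$, not only to the single pair $(a,b)$ versus $(i,i+1)$. Once that is done, both parts go through.
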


For a point $\alpha=(a,b)\in[k-1]\times[n-1]$, let $\slope(\alpha):=\frac ab$. Part~\itemref{slope:=} of the next lemma confirms that $\F(f)$ always contains all points on the main diagonal of $[k-1]\times[n-1]$.
\begin{lemma}\label{lemma:slope}
Let $f\in\Bknc$, $\alpha\in[k-1]\times[n-1]$, and $\Qinf:=\Pinf+\alpha$. 
\begin{theoremlist}
\item\label{slope:<} If $\slope(\alpha)\leq \slope(\del)$ then $\Qinf$ contains integer points below $\Pinf$. 
\item\label{slope:>} If $\slope(\alpha)\geq \slope(\del)$ then $\Qinf$ contains integer points above $\Pinf$.
\item\label{slope:=} If $\slope(\alpha) = \slope(\del)$ then $\alpha$ belongs to $\F(f)$.
\end{theoremlist}
\end{lemma}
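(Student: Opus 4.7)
The plan is to detect on which side of $\Pinf$ an integer point of $\Qinf$ lies through a direct calculation. Writing $\alpha=(a,b)$, every integer point of $\Qinf$ takes the form $\pf_r+\alpha=(f^r(0)/n+a,\,r+b)$, while $\Pinf$ passes through $\pf_{r+b}$ at the same horizontal coordinate $r+b$. Comparing vertical coordinates, I will show that $\pf_r+\alpha$ lies strictly below (resp.\ above) $\Pinf$ exactly when $H(r)>na$ (resp.\ $H(r)<na$), where $H(r):=f^{r+b}(0)-f^{r}(0)$. Thus parts~\itemref{slope:<} and~\itemref{slope:>} reduce to finding $r\in\Z$ with $H(r)>na$ and $H(r)<na$, respectively.

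Next I would establish that $H$ is $n$-periodic with average $kb$ over one period. Since $\fb\in\Snc$ is an $n$-cycle, one has $f^n(0)=kn$, which gives the periodicity of $H$; and a telescoping computation using $f^{r+n}(0)=f^{r}(0)+kn$ yields $\sum_{r=0}^{n-1}H(r)=kbn$. Under the hypothesis $\slope(\alpha)\leq\slope(\del)$, i.e.\ $na\leq kb$, picking $r$ with $H(r)$ maximal produces $H(r)\geq kb\geq na$. When this inequality is strict (so $na<kb$), part~\itemref{slope:<} follows, and the symmetric argument with $\min_r H(r)\leq kb\leq na$ handles~\itemref{slope:>} when $na>kb$.

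The main obstacle will be the equality case $na=kb$. Suppose for contradiction that no $r$ satisfies $H(r)>na=kb$; then $H(r)\leq kb$ for every $r$, and combined with the average being $kb$ this forces $H(r)=kb$ identically. Evaluating at $r=0$ gives $f^{b}(0)=kb$; but $kb=na$ is divisible by $n$, so $\fb^{b}(0)=0$, contradicting $\fb$ being an $n$-cycle together with $1\leq b\leq n-1$. The symmetric contradiction rules out the equality case in~\itemref{slope:>}.

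Finally, part~\itemref{slope:=} will follow from~\itemref{slope:<} and~\itemref{slope:>} by an intermediate value argument: under $\slope(\alpha)=\slope(\del)$ the path $\Qinf$ contains integer points strictly below and strictly above $\Pinf$, and since $\Qinf$ is connected while $\Pinf$ is the graph of a continuous piecewise linear function of the horizontal coordinate, $\Pinf\cap\Qinf$ must be nonempty. Invoking (the proof of)~\cref{prop:F_paths}, this intersection witnesses $\alpha\in\F(f)$.
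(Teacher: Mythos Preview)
Your proof is correct and follows essentially the same averaging argument as the paper. The paper packages the computation via the linear functional $\delp=(1,-k/n)$, noting that $\langle\delp,\Qinf\rangle-\langle\delp,\Pinf\rangle=n\langle\delp,\alpha\rangle$, while you unpack the same comparison in explicit coordinates through $H(r)=f^{r+b}(0)-f^r(0)$ and its average $kb$; your treatment of the equality case $na=kb$ via the $n$-cycle contradiction is a bit more explicit than the paper's, which tacitly uses the earlier observation that no integer point of $\Qinf$ lies on $\Pinf$.
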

\begin{proof}
For $r\in\Z$, let $\qf_r$ be the integer point of $\Qinf$ with horizontal coordinate $\n(\qf_r)=r$. 
Let $\delp:=(1,-k/n)$ and denote by $\<\cdot,\cdot\>$ the standard dot product on $\R^2$. We have $\<\delp,\del\>=0$, and the sign of $\<\delp,\alpha\>$ coincides with the sign of $\slope(\alpha)-\slope(\del)$. Let
\begin{equation}\label{eq:delp_Pinf}
  \<\delp,\Pinf\>:=\sum_{r=0}^{n-1} \<\delp,\pf_r\> \quad\text{and}\quad   \<\delp,\Qinf\>:=\sum_{r=0}^{n-1} \<\delp,\qf_r\>.
\end{equation}
Since $\pf_{r+n}=\pf_r+\del$ for all $r\in\Z$, we have $\<\delp,\Pinf\>=\sum_{r=j}^{j+n-1}\<\delp,\pf_r\>$ for all $j\in\Z$, and similarly for $\<\delp,\Qinf\>$. In particular, we have 
\begin{equation*}%
  \<\delp,\Qinf\>=\sum_{r=0}^{n-1} \<\delp,\pf_r+\alpha\>=\<\delp, \Pinf\>+n\<\delp,\alpha\>.
\end{equation*}
Observe that for each $r\in\Z$, $\qf_r$ is above $\Pinf$ if and only if it is above $\pf_r$, which happens if and only if $\<\delp,\qf_r-\pf_r\>>0$, since the vertical coordinate of $\delp$ is positive.  Thus~\itemref{slope:<}--\itemref{slope:>} follow, and~\itemref{slope:=} follows by combining~\itemref{slope:<}--\itemref{slope:>} with (the proof of) \cref{prop:F_paths}, since if $\slope(\alpha)=\slope(\del)$ then $\Qinf$ contains integer points both below and above $\Pinf$, and therefore intersects $\Pinf$.
\end{proof}

\section{Convexity of the inversion multiset}\label{sec:convex}

Similarly to \cref{sec:main-result}, we say that $\F(f)$ is \emph{convex} if $\Fbf$ contains all lattice points of its convex hull. (These sets were defined in the beginning of \cref{sec:big-paths}.) The goal of this section is to prove the following result. %

\begin{theorem}\label{thm:convex}
Let $f\in\Bknc$ be \repfree. Then the set $\F(f)$ is convex.
\end{theorem}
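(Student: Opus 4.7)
The plan is to reduce the convexity of $\Fbf$ to a concavity statement about maximum consecutive sums of the slope sequence of the big path $\Pinf$. Let $\psi\colon\R\to\R$ be the piecewise-linear interpolation of $r\mapsto f^r(0)/n$, and set $G_b(x):=\psi(x)-\psi(x-b)$. By \cref{prop:F_paths}, $(a,b)\in\F(f)$ iff $G_b$ attains the value $a$; since $G_b$ is continuous, $n$-periodic, and piecewise-linear with breakpoints at integers, its image is an interval $[m_b,M_b]$ with extrema attained at integer points. Introducing the slope sequence $\epsilon_r:=f^r(0)-f^{r-1}(0)\in[1,n-1]$ (so $\sum_{r=1}^n\epsilon_r=kn$), we have $G_b(r)=\tfrac{1}{n}E_b(r)$ where $E_b(r):=\epsilon_{r-b+1}+\cdots+\epsilon_r$. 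The repetition-free hypothesis applied at $b=1$ forces the cyclic sequence $\epsilon$ to be unimodal---one maximal and one minimal plateau per period---because by \cref{prop:F_paths} the generic level sets of $G_1$ have at most two elements.

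The combinatorial heart of the argument is the claim that for cyclically unimodal $\epsilon$, $N_c:=\max_r E_c(r)$ equals the sum $\sum_{i=1}^c\epsilon^{\downarrow}_i$ of the $c$ largest entries. Granting this, $N_{c+1}-N_c=\epsilon^{\downarrow}_{c+1}$ is non-increasing in $c$, so $N_c$ is concave. I would prove the claim inductively: the top-$c$ set of positions can always be chosen as a cyclic interval $T_c$ around the peak $p$ avoiding the valley $v$. For $c<n$, the outside arc of $T_c$ contains $v$, so the restriction of $\epsilon$ to this arc is again unimodal with minimum at $v$; hence the next-largest outside entry is attained at a position adjacent to $T_c$, and $T_{c+1}$ extends $T_c$ by one endpoint, still a cyclic interval. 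In particular $T_c$ is itself a window of length $c$ realizing $N_c$.

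The telescoping identity $E_b(r)+E_{n-b}(r-b)=kn$ then gives $nm_b=kn-N_{n-b}$ and $nM_b=N_b$, so $(m_b)$ is a convex and $(M_b)$ a concave sequence in $b\in\{0,\dots,n\}$ with boundary values $0$ and $k$. Let $\tilde m,\tilde M\colon[0,n]\to\R$ denote the piecewise-linear interpolations; then the region $R:=\{(a,b)\in\R^2:0\leq b\leq n,\ \tilde m(b)\leq a\leq\tilde M(b)\}$ is convex. A direct check (using $m_b>0$ and $M_b<k$ for $b\in\{1,\dots,n-1\}$) gives $\Fbf=R\cap\Z^2$, hence $\Conv(\Fbf)\subseteq R$ and every lattice point of $\Conv(\Fbf)$ lies in $R\cap\Z^2=\Fbf$. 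This proves $\Fbf$ is lattice-convex, i.e.\ $\F(f)$ is convex.

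The main obstacle will be the top-$c$ claim: verifying carefully, with attention to ties and the cyclic wrap-around, that the $c$ largest entries of a unimodal cyclic sequence can always be gathered into a single cyclic interval around the peak. Everything else is formal once the concavity of $N_c$ is in hand.
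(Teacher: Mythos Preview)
Your reduction to the concavity of the window-maximum sequence $N_c$ does not go through: the repetition-free hypothesis does \emph{not} force the slope sequence $\epsilon$ to be cyclically unimodal, and $N_c$ need not be concave.

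First, the step ``repetition-free at $b=1$ forces $\epsilon$ unimodal'' is vacuous. For $b=1$ one has $G_1(r)=\epsilon_r/n\in(0,1)$, so $G_1$ never attains any integer $a\geq 1$; the multiplicity of $(a,1)$ in $\F(f)$ is zero for every $a\in[k-1]$, and no constraint on $\epsilon$ is obtained.

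Second, the unimodality conclusion is actually false. Take $n=6$, $k=3$, and $\bar f=(0,3,2,5,1,4)$ in cycle notation (this is precisely the example in \cref{fig:path}). Then $f^r(0)=0,3,8,11,13,16$ for $r=0,\dots,5$, so
\[
\epsilon=(3,5,3,2,3,2),
\]
which has two cyclic local maxima (at positions $2$ and $5$) and is not unimodal. One checks directly that the only inversions of $f$ are $(3,4)$ and $(3,5)$, giving $\F(f)=\{(1,2),(2,4)\}$; in particular $f$ is repetition-free. For this $f$,
\[
(N_0,\dots,N_6)=(0,5,8,11,13,16,18),
\]
and $N_5-2N_4+N_3=16-26+11=1>0$, so $N_c$ is strictly convex at $c=4$. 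Thus both your ``combinatorial heart'' claim and the concavity of $M_b=N_b/n$ fail, while $\Fbf=\{(0,0),(1,2),(2,4),(3,6)\}$ is of course convex. The failure is intrinsic to the method: concavity of $M_b$ is strictly stronger than lattice-convexity of $\Fbf$, and repetition-free does not buy you the former.

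The paper's proof is entirely different and does not attempt any such global concavity. It proceeds by induction using a double crossing at $0$: one studies how $P[0\to1]$ and $P[1\to n]$ can intersect shifts of one another (\cref{prop:cross_below}, \cref{lemma:impossible}), proves that $\del_1,\del_2$ are vertices of the convex hull and that $\F(f)$ decomposes as in \cref{cor:supset_12} in terms of $\F(f_1),\F(f_2)$, and then invokes the inductive hypothesis on $f_1,f_2$.
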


We start by stating some consequences of the results obtained in \cref{sec:affine}. Let
\begin{equation*}%
  \Fmin=\{\alpha\in[k-1]\times[n-1]\mid \slope(\alpha)=\slope(\del)\}.
\end{equation*}
 By \cref{slope:=}, we have $\Fmin\subseteq \F(f)$ for all $f\in\Bknc$. The next two statements follow directly from \cref{prop:structure}.
\begin{corollary}\label{find_double_cross}
Let $f\in\Bknc$ be \repfree. Then at least one of the following holds:
\begin{itemize}
\item $\F(f)=\Fmin$.
\item There exists $g\in\Bknc$ such that $f\ceq g$ and $g$ has a double crossing at some $i\in\Z$.
\end{itemize}
\end{corollary}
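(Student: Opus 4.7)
The plan is to contrapositively assume the second bullet fails and deduce the first. Suppose no $g \ceq f$ has a double crossing at any $i \in \Z$. I claim that $f$ itself must then be of minimum length in $\Bknc$. Indeed, \cref{prop:structure}\itemref{item:reduce} guarantees that $f$ can be brought to a minimum length element of $\Bknc$ via a sequence of double moves and \Ceqvces. If $f$ is not already of minimum length, then along this reduction sequence at least one double move must occur; the first such double move is applied to some $g \in \Bknc$ reached from $f$ by \Ceqvces alone, i.e. $g \ceq f$, and $g$ has a double crossing, contradicting our assumption. Hence $\ell(f) = d := \gcd(k,n) - 1$ by \cref{prop:structure}\itemref{item:minimumlength}.

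Now I count. Since $f$ is \repfree, the multiset $\F(f)$ is actually a set of exactly $\ell(f) = d$ distinct points. On the other hand, the set
\[
\Fmin = \{\alpha \in [k-1] \times [n-1] \mid \slope(\alpha) = \slope(\del)\}
\]
consists precisely of the points $(jk/d,\, jn/d)$ for $j = 1, 2, \dots, d-1$, hence $|\Fmin| = d - 1$. By \cref{lemma:slope}\itemref{slope:=}, we have the inclusion $\Fmin \subseteq \F(f)$. Combining this with the cardinality equality $|\F(f)| = d - 1 = |\Fmin|$ forces $\F(f) = \Fmin$, which is the first bullet.

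The only point that required a moment of care was justifying that ruling out double crossings up to \Ceqvce rules out their use anywhere along the reduction in \cref{prop:structure}\itemref{item:reduce}; this is immediate from the observation that the reduction begins with a (possibly trivial) sequence of \Ceqvces before the first double move, so the existence of any double move in the reduction produces some $g \ceq f$ with a double crossing. There is no genuine obstacle here; the corollary really is a direct unpacking of \cref{prop:structure} combined with the cardinality identity $|\F(f)| = \ell(f)$ for \repfree $f$.
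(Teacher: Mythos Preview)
Your approach is exactly the one the paper intends (it simply asserts the corollary ``follows directly from \cref{prop:structure}'' together with the surrounding material), and the logic is sound: if no $g\ceq f$ admits a double crossing then the reduction of \cref{prop:structure}\itemref{item:reduce} uses no double moves, so $f$ already has minimal length $d=\gcd(k,n)-1$; then $\Fmin\subseteq\F(f)$ by \cref{lemma:slope}\itemref{slope:=} and a cardinality count finishes.

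There is, however, an arithmetic slip in your count of $|\Fmin|$. With $d:=\gcd(k,n)-1$ as you defined it, the points of $\Fmin$ are $(j\,k/\gcd(k,n),\,j\,n/\gcd(k,n))$ for $j=1,\dots,\gcd(k,n)-1$, i.e.\ the denominator is $d+1$, not $d$, and there are exactly $d$ of them, not $d-1$. So the correct equality is $|\F(f)|=\ell(f)=d=|\Fmin|$, which is what you need; your line ``$|\F(f)|=d-1=|\Fmin|$'' contradicts your own earlier (correct) assertion that $|\F(f)|=d$. Once this off-by-one is fixed, the proof is complete.
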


\begin{corollary}\label{Fmin_MSeq}
Suppose that $f,g\in\Bknc$ are \repfree and $\F(f)=\F(g)=\Fmin$. Then $\Cat_f=\Cat_g$. 
\end{corollary}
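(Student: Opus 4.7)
The plan is to reduce this to \cref{item:minimumlength} of \cref{prop:structure} by showing that any repetition-free $f$ with $\F(f)=\Fmin$ has minimal length $d := \gcd(k,n)-1$ in $\Bknc$, so that $f$ and $g$ are related by cyclic shifts and \Ceqvces, which in turn preserve $\Cat$ by \cref{prop:cyc_shift_iso,prop:Cat_rec}.

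First I would compute $|\Fmin|$. Writing $d = \gcd(k,n)$, $k = dk'$, $n = dn'$ with $\gcd(k',n')=1$, the condition $a/b = k/n$ for $(a,b) \in [k-1]\times[n-1]$ forces $(a,b) = (jk',jn')$ for a unique $j \in \{1,\ldots,d-1\}$. Hence $|\Fmin| = d-1$.

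Next, since $f$ is \repfree, the multiset $\F(f)$ is actually a set of cardinality $\ell(f)$ (by \cref{dfn:F(f)} together with the fact that every inversion of $f$ contributes one point to $\F(f)$). Thus $\F(f) = \Fmin$ forces
\begin{equation*}
\ell(f) = |\Fmin| = d-1 = \gcd(k,n)-1.
\end{equation*}
By \cref{item:minimumlength} of \cref{prop:structure}, this is precisely the minimal length in $\Bknc$. The same argument applies to $g$, so both $f$ and $g$ are minimal length elements of $\Bknc$, hence related by a sequence of cyclic shifts~\eqref{eq:cyc_shift} and length-preserving simple conjugations.

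Finally, applying \cref{prop:cyc_shift_iso} (invariance under $\sigma$) and the \Ceqvce clause of \cref{prop:Cat_rec} along such a sequence yields $\Cat_f = \Cat_g$. There is no real obstacle here; the only thing to verify carefully is that the minimal length in $\Bknc$ equals $\gcd(k,n)-1$, but this is exactly \cref{item:minimumlength}. Everything else is bookkeeping about $|\Fmin|$ and the preservation of $\Cat$ under the two basic moves.
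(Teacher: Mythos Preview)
Your proof is correct and follows the same route the paper intends: the paper simply says the corollary ``follows directly from \cref{prop:structure},'' and your argument makes that explicit by showing $\ell(f)=|\Fmin|=\gcd(k,n)-1$ and then invoking \cref{item:minimumlength} together with the invariance of $\Cat$ under cyclic shift and \Ceqvce. One cosmetic slip: you first set $d:=\gcd(k,n)-1$ and then later write $d=\gcd(k,n)$; pick one convention.
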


Throughout the rest of this section, the following data is fixed:
\begin{itemize}
\item a \repfree $f\in\Bknc$ that has a double crossing at $0$;
\item a big path $\Pinf:=\Pfinf$ and a small path $P:=\Pf$ for $f$;
\item $f_1:=f^{(0,1)}_1\in \Bxc_{k_1,n_1}$ and $f_2:=f^{(0,1)}_2\in\Bxc_{k_2,n_2}$ obtained by resolving the crossing $(0,1)$ as in \cref{sec:intro:rep-free};
\item $\del:=(k,n)$, $\del_1:=(k_1,n_1)$, and $\del_2:=(k_2,n_2)$.
\end{itemize}

\begin{notation}\label{notn:P[*]}
For each $0\leq r<n$, let $0\leq \lab_r<n$ be the unique index equal to $f^r(0)$ modulo $n$. Then we label $\pf_r$ by $P[\lab_r]$ as in \cref{fig:path}. We extend this to all $r\in\Z$ using the convention that $j_{r+n}:=j_r+n$, and we label $\pf_r$ by $P\subfty[\lab_r]$ for $r\in\Z$. Thus $P\subfty[j+n]=P\subfty[j]+\del$ for all $j\in\Z$. If $P\subfty[i]$ appears to the left of $P\subfty[j]$ for some $i,j\in\Z$, we denote by $P\subfty[i\to j]$ the subpath of $\Pinf$ connecting $P[i]$ to $P[j]$. Thus $P=P\subfty[0\ton]$ and we will be particularly interested in the subpaths $P[0\to1]$ and $P[1\ton]$ of $P$ (under the above assumption that $f$ has a double crossing at $0$).
\end{notation}

We establish several elementary properties of $\F(f)$. For a subset $\Flet'\subset\Z^2$, we let $\Flet'+\Z\del:=\{ \alpha+t\del\mid \alpha\in \Flet', t\in\Z\}$. Recall also that we set $\delp:=(1,-k/n)$ and that the sign of $\<\delp,\alpha\>$ is positive if and only if $\alpha$ is above the line spanned by $\del$. Finally, we adopt the convention that when we shift a (big or small) path, its labeling of points from \cref{notn:P[*]} is preserved; for example, $(P+\alpha)[1\ton]:=P[1\ton]+\alpha$, etc.
\begin{lemma}\ \label{lemma:F_properties_easy}
\begin{theoremlist}
\item\label{item:semigroup} Let $\alpha,\beta\in \Z^2$ be such that $\<\delp,\alpha\><0$, $\<\delp,\beta\><0$, and $\alpha,\beta\notin (\Fbf+\Z\del)$. Then $\alpha+\beta\notin (\Fbf+\Z\del)$.
\item\label{item:sl_1<sl<sl_2} We have $\slp(\del_1)<\slp(\del)<\slp(\del_2)$.
\item\label{item:order_ideal} Let $\alpha\in\F(f)$ be such that $\<\delp,\alpha\><0$. Then for all $\beta\in\Z^2$ satisfying $\<\delp,\beta\>\leq0$, $\n(\beta)\leq \n(\alpha)$, and $\k(\beta)\geq\k(\alpha)$, we have $\beta\in \F(f)$.
\end{theoremlist}
\end{lemma}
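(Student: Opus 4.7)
The plan is to view $\Pinf$ as a strictly increasing piecewise-linear curve in $\R^2$ whose horizontal coordinate runs over all of $\R$ and which is $\del$-periodic, so that at each horizontal position $x$ the curve $\Pinf$ has a unique vertical value $y(x)$ and the translate $\Pinf+\alpha$ takes the value $y(x-\n(\alpha))+\k(\alpha)$. Under this identification, the sign of $\<\delp,\alpha\>$ records whether the translate sits above, on, or below $\Pinf$ on average, and by \cref{prop:F_paths} the condition $\alpha\in\Fbf+\Z\del$ is equivalent to $\Pinf$ meeting $\Pinf+\alpha$.

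For part (i), the hypotheses $\<\delp,\alpha\><0$ and $\alpha\notin\Fbf+\Z\del$ put $\Pinf+\alpha$ below $\Pinf$ on average while disjoint from $\Pinf$, so by connectedness $\Pinf+\alpha$ lies strictly below $\Pinf$ everywhere. The same reasoning applied to $\beta$ gives $\Pinf+(\alpha+\beta)=(\Pinf+\alpha)+\beta$ strictly below $\Pinf+\alpha$, and hence strictly below $\Pinf$. Since $\<\delp,\alpha+\beta\>=\<\delp,\alpha\>+\<\delp,\beta\><0$, the sum $\alpha+\beta$ cannot lie on $\Z\del$ either, so $\alpha+\beta\notin\Fbf+\Z\del$.

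For part (iii), the combination $\alpha\in\F(f)$ with $\<\delp,\alpha\><0$ forces $\Pinf+\alpha$ to cross $\Pinf$ while lying below on average, so at some horizontal position $x_0$ the translate is strictly above $\Pinf$. Monotonicity of $y$, together with the hypotheses $\n(\beta)\leq\n(\alpha)$ and $\k(\beta)\geq\k(\alpha)$, gives $y(x_0-\n(\beta))+\k(\beta)\geq y(x_0-\n(\alpha))+\k(\alpha)>y(x_0)$, so $\Pinf+\beta$ is strictly above $\Pinf$ at $x_0$ as well. If $\<\delp,\beta\><0$, then $\Pinf+\beta$ is below on average and above at $x_0$, so an intersection exists by the intermediate value theorem, giving $\beta\in\F(f)$ after noting that the hypotheses place $\beta$ in $[k-1]\times[n-1]$. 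The boundary case $\<\delp,\beta\>=0$ is immediate from \cref{slope:=}.

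Part (ii) is the main obstacle. The weak chain $\slope(\del_1)\leq\slope(\del)\leq\slope(\del_2)$ is immediate from $\del_1+\del_2=\del$ and the mediant inequality, and the correct orientation $\slope(\del_1)\leq\slope(\del_2)$ reflects the asymmetry of the double crossing, as the cycle of $\fb^{(0,1)}$ through $0$ begins with the short step $0\mapsto c=f(1)$ while the cycle through $1$ begins with the longer step $1\mapsto d=f(0)$, and $c<d$. Making these inequalities strict is more subtle: if $\slope(\del_1)=\slope(\del)$, then the mediant forces $\slope(\del_2)=\slope(\del)$ as well, so both $\del_1,\del_2$ lie on the lattice line $\R\del$. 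I would then invoke \repfree to derive a contradiction by producing a further inversion of $f$, distinct from $(0,1)$ and from its double-crossing partner $(a+n,b+n)$, whose $\knk$-image again equals $\del_1$; geometrically, when $\slope(\del_1)=\slope(\del)$ the integer points of $\Pinf+\del_1$ are forced to alternate above and below $\Pinf$ in a way that produces a transverse crossing beyond the one coming from the original double crossing. Identifying this extra crossing and matching it to an inversion distinct from the given two is the combinatorial step I expect to require the most care.
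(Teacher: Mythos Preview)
Your arguments for \itemref{item:semigroup} and \itemref{item:order_ideal} are correct. Part~\itemref{item:semigroup} is exactly the paper's argument. For~\itemref{item:order_ideal} you take a more direct geometric route (find a horizontal position where $\Pinf+\alpha$ is strictly above $\Pinf$, then push $\alpha$ to $\beta$ using monotonicity of the height function), whereas the paper instead applies~\itemref{item:semigroup} to $e:=\alpha-\beta$ and $\beta$: since $e$ has $\n(e)\ge0$, $\k(e)\le0$, it lies outside $\Fbf+\Z\del$, so if $\beta$ did too, then $\alpha=e+\beta$ would as well. Both work; the paper's version avoids having to verify monotonicity of the piecewise-linear height function.

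Part~\itemref{item:sl_1<sl<sl_2} has a genuine gap. Your claim that $\slp(\del_1)\le\slp(\del_2)$ ``reflects the asymmetry of the double crossing'' because the cycle through $0$ starts with the shorter step $0\mapsto c$ while the cycle through $1$ starts with $1\mapsto d$ is not a proof: the slopes $k_j/n_j$ depend on the entire cycles, not their first steps, and there is no a priori reason the first step controls the average. Your fallback plan (assume $\slp(\del_1)=\slp(\del)$, then manufacture an extra inversion with $\kn$-image $\del_1$ to violate repetition-freeness) is plausible but you have not carried it out, and the ``alternating above and below'' picture you describe does not follow automatically.

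The paper's argument is short and quite different. The double crossing at $0$ means that $\Pinf$ and $\Pinf+\del_1$ meet only in the small region where $P[n]\approx P[1]+\del_1$ (cf.\ \cref{notn:approx}); in particular, the only integer points of $\Pinf+\del_1$ lying above $\Pinf$ are $P[1]+\Z\del$. Shifting once more by $\del_1$, those points land at $P[1]+\del_1+\Z\del\approx P[n]+\Z\del$, which are on $\Pinf$, so $\Pinf+2\del_1$ has \emph{no} integer points above $\Pinf$. But \cref{slope:>} says that if $\slp(2\del_1)\ge\slp(\del)$ then $\Pinf+2\del_1$ must have integer points above $\Pinf$. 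Hence $\slp(\del_1)=\slp(2\del_1)<\slp(\del)$, and the other inequality is symmetric. No separate ``weak inequality via mediant'' step is needed, and repetition-freeness enters only through the statement that the double crossing accounts for the sole contact between $\Pinf$ and $\Pinf+\del_1$.
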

\begin{proof}
\itemref{item:semigroup}: We showed in the proof of \cref{lemma:slope} that if $\<\delp,\alpha\><0$ then $\Pinf+\alpha$ contains integer points below $\Pinf$. If in addition $\alpha\notin (\Fbf+\Z\del)$ then $\Pinf+\alpha$ and $\Pinf$ do not intersect. Thus $\Pinf+\alpha$ and $\Pinf+\beta$ are both below $\Pinf$. But then $\Pinf+\alpha+\beta$ is below $\Pinf+\alpha$, and therefore it is below $\Pinf$, so $\alpha+\beta\notin (\Fbf+\Z\del)$.

\itemref{item:sl_1<sl<sl_2}: The only integer points of $\Pinf+\del_1$ above $\Pinf$ are $P[1]+\Z\del$. Thus $\Pinf+2\del_1$ is below $\Pinf$ (cf. \cref{rmk:semigroup_strong} below). By \cref{slope:>}, we must have $\slp(\del_1)=\slp(2\del_1)<\slp(\del)$. Similarly, $\slp(\del)<\slp(\del_2)$.

\itemref{item:order_ideal}: If $\beta=\alpha$ then clearly $\beta\in\F(f)$. Assume that $\beta\neq\alpha$ and let $e:=\alpha-\beta$. We have $e\neq0$, $\n(e)\geq0$, and $\k(e)\leq0$, so $e\notin \Fbf+\Z\del$ and $\<\delp,e\><0$. If $\beta\notin(\Fbf+\Z\del)$ then by~\itemref{item:semigroup}, we must have $\alpha\notin(\Fbf+\Z\del)$, a contradiction. Thus $\beta\in(\Fbf+\Z\del)$, and the conditions on the coordinates of $\beta$ ensure that in fact $\beta\in\F(f)$.
\end{proof}

\begin{remark}\label{rmk:semigroup_strong}
  The proof of~\itemref{item:semigroup}--\itemref{item:sl_1<sl<sl_2} above shows the stronger statement that if $\<\delp,\alpha\><0$, $\<\delp,\beta\><0$, and $\alpha,\beta\notin ((\Fbf\setminus\{\del_1\})+\Z\del)$ then $\alpha+\beta\notin (\Fbf+\Z\del)$.
\end{remark}

\begin{figure}
  \includegraphics{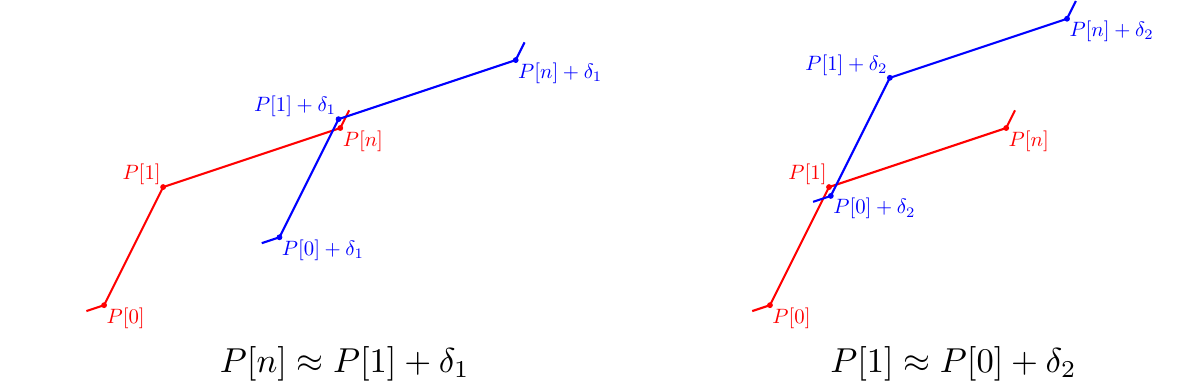}
  \caption{\label{fig:P_double} We write $P[n]\approx P[1]+\del_1$ and $P[1]\approx P[0]+\del_2$; see \cref{notn:approx}.}
\end{figure}

\begin{notation}\label{notn:approx}
Observe that the points $P[1]$ and $P[0]+\del_2$ differ by $(1/n,0)$. Moreover, the two paths $\Pinf$ and $\Pinf+\del_2$ form a double crossing at these two points, thus they form a small region  as in \cref{fig:P_double}. Therefore no shift of $\Pinf$ can contain an integer point in this region. In our analysis, we usually treat this region as a ``single point'' and write $P[1]\approx P[0]+\del_2$ and $P[n]\approx P[1]+\del_1$. By an abuse of terminology, we will say that $\Pinf$ is \emph{below} $\Pinf+\del_2$ and \emph{above} $\Pinf+\del_1$.
\end{notation}

\begin{lemma}
The bounded affine permutations $f_1$ and $f_2$ are \repfree.
\end{lemma}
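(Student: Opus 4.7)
Plan. We prove that $f_1$ is \repfree; the argument for $f_2$ is symmetric. By \cref{prop:F_paths}, it suffices to show that every element of $\F(f_1)$ appears with multiplicity one.

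The strategy is to set up a bookkeeping map from inversions of $f_1$ to inversions of $f$. Given an $f_1$-inversion $(i',j')\in[n_1]^2$, the order-preserving bijection $\{a_1,\dots,a_{n_1}\}\to[n_1]$ pulls it back to a pair $(i,j)$ of elements of the cycle of $0$ under $\fb^{(0,1)}$. One verifies that $(i,j)$ is always an inversion of $f$: for $i\neq 0$ the condition $f_1(i')>f_1(j')$ coincides with $f^{(0,1)}(i)>f^{(0,1)}(j)$, which is $f(i)>f(j)$ since $f^{(0,1)}$ agrees with $f$ off $\{0,1\}$; for $i=0$ it reads $c=f(1)>f(j)$, which implies $d=f(0)>f(j)$ because the double crossing forces $d>c$.

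The main computation is to compare $\knk((f_1)^{(i',j')}_1)$ with $\knk(f^{(i,j)}_1)$. Writing $\fb^{(i,j)}=(\fb(i)\,\fb(j))\cdot\fb$, one checks by cycle surgery that the cycle of $i$ inside $\fb^{(i,j)}$ is obtained from the cycle of $i$ inside $\fb_1^{(i',j')}$ by splicing back in the $n_2$-arc of $\fb$ through $1$; this splicing contributes exactly $\del_2=(k_2,n_2)$ to both the length of the cycle and its descent count, so $\knk(f^{(i,j)}_1)=\knk((f_1)^{(i',j')}_1)+\del_2$ when $i\neq 0$. When $i=0$, the splicing operation degenerates (the $n_2$-arc detaches rather than being reinserted) and the two $\knk$ values coincide.

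Suppose for contradiction that some $\alpha'\in\F(f_1)$ has multiplicity at least two, witnessed by distinct $f_1$-inversions $(i'_1,j'_1)$ and $(i'_2,j'_2)$. If both have $i'\neq 1$, then $\alpha'+\del_2$ has multiplicity $\geq 2$ in $\F(f)$, contradicting the \repfree hypothesis. The case of two inversions with $i'=1$ is impossible because the $n$-coordinate of $\alpha'$ then determines the cyclic distance, hence $j'$, uniquely. The remaining mixed case produces both $\alpha'$ and $\alpha'+\del_2$ in $\F(f)$ with multiplicity $\geq 1$ each, which is the main obstacle. To rule it out, I would add the observation that the inversion $(0,1)$ of $f$ already contributes $\del_1=\knk(f^{(0,1)}_1)$ to $\F(f)$ with multiplicity $\geq 1$, and then argue via the path picture (\cref{prop:F_paths}) combined with \cref{lemma:slope} that in the mixed configuration the intersections $\Pinf\cap(\Pinf+\alpha')$ and $\Pinf\cap(\Pinf+\alpha'+\del_2)$ cannot each be supported on a single crossing: one of them must absorb an extra crossing coming from the smoothing at $\{P\subfty[0]+t\del,\,P\subfty[1]+t\del\}_{t\in\Z}$, producing multiplicity $\geq 2$ in $\F(f)$ and so contradicting the \repfree hypothesis again.

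The delicate part is precisely this last step; everything else is routine combinatorics. I would approach it by realizing $\Pinf_1$ and $\Pinf_2$ as the two components obtained from $\Pinf$ by smoothing the double crossings at every $\del$-translate, and tracking how intersection multiplicities modulo $\del_1$ transfer to intersection multiplicities modulo $\del$ under this identification, using that distinct $\del_1$-orbits cannot collapse to a single $\del$-orbit on self-intersections of the $0$-component alone.
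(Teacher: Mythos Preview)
Your overall strategy---mapping $f_1$-inversions to $f$-inversions and showing that a repeated $\alpha'$ in $\F(f_1)$ forces a repetition in $\F(f)$---is reasonable and is essentially a combinatorial reformulation of the paper's geometric argument. However, there are two genuine gaps.

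First, the dichotomy you set up is mis-stated. An $f_1$-inversion $(i',j')$ need not have $j'\in[n_1]$; more importantly, the criterion for whether the associated $f$-inversion contributes $\alpha'$ or $\alpha'+\del_2$ to $\F(f)$ is \emph{not} ``$i=0$ vs.\ $i\neq 0$''. Translating via \cref{prop:F_paths}, an $f_1$-inversion giving $\alpha'$ is a crossing of $P[1\to n]$ with $(P[1\to n]+\alpha')\cup(P[1\to n]+\alpha'-\del_1)$; it contributes $\alpha'$ to $\F(f)$ if it lies on the first piece and $\alpha'+\del_2$ if on the second. This is a geometric condition on the crossing, not a condition on the index $i$. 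Your cycle-splicing justification (``the $n_2$-arc is reinserted unless $i=0$'') is not correct: whether the $1$-cycle of $\fb^{(0,1)}$ lands in the $i$-cycle or the $j$-cycle of $\fb^{(i,j)}$ depends on the cyclic position of $1$ relative to $i$ and $j$ in $\fb$, not on whether $i=0$. Relatedly, the claim that the pullback $(i,j)$ is always an $f$-inversion fails when $j\equiv 0\pmod n$: then $f^{(0,1)}(j)=f(1)+tn<f(0)+tn=f(j)$, so $f^{(0,1)}(i)>f^{(0,1)}(j)$ does not give $f(i)>f(j)$.

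Second, and more seriously, the mixed case is not proved. You correctly identify it as the crux, but what you write is a plan (``I would approach it by\dots''), not an argument. The vague appeal to an ``extra crossing coming from the smoothing'' is exactly what needs to be made precise. The paper resolves this case as follows. If $\alpha'$ had multiplicity $\geq 2$ in $\F(f_1)$ and the two crossings came from different translates (your mixed case), then in fact one can show both translates must each contribute two crossings (so the multiplicity is $4$), because a single crossing of $P[1\to n]$ with $Q[1\to n]$ forces a second one by parity once you know the endpoints' positions relative to $\Pinf$. Then, depending on whether $Q[1]$ is above or below $\Pinf$, one exhibits three intersection points of $P[1\to n]$ with a single translate $\Qinf$ or $Q'_\infty$, contradicting \repfree of $f$. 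This geometric argument is short and clean, whereas a purely combinatorial resolution along your lines would require substantially more bookkeeping than you have supplied.
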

\begin{proof}
Let us compare the big path $\Pinf^{(f_1)}$ with $(P[1\ton])_\infty:=\bigcup_{t\in\Z}(P[1\ton]+t\del_1)$, where we identify the points $P[n]+(t-1)\del_1\approx P[1]+t\del_1$ for all $t\in\Z$. It is easy to see that these two paths are \emph{equivalent} in the sense that for each $\alpha=(a,b)\in\Z^2$, we have 
\begin{equation*}%
  |\Pinf^{(f_1)}\cap (\Pinf^{(f_1)}+\alpha)|=|(P[1\ton])_\infty\cap ((P[1\ton])_\infty+\alpha)|,
\end{equation*} 
where the intersection points are counted modulo $\del_1$. Thus we need to analyze the intersections of $(P[1\ton])_\infty$ with its shifts. 

Let $\alpha\in[k_1-1]\times[n_1-1]$ and $Q:=P+\alpha$. Let $s:=|(P[1\ton])_\infty\cap (Q[1\ton])_\infty|$. Since $f$ is \repfree, $P[1\ton]$ intersects $Q[1\ton]$ at most twice. Moreover, $P[1\ton]$ can intersect $Q[1\ton]+t\del_1$ only for $t\in\{-1,0\}$. Thus $s\leq 4$.

Suppose that $s>2$. Since $s$ is even, we have $s\geq 4$, and thus $s=4$. We see that $P[1\ton]$ intersects each of $Q[1\ton]$ and $Q':=Q[1\ton]-\del_1$ twice. Suppose first that $Q[1]$ is above $\Pinf$. Then $Q[1-n\to1]$ stays below $Q'[1\ton]$ which intersects $P[1\ton]$, and therefore $Q[1-n\to1]$ intersects $P[1\ton]$. We have found three intersection points of $P[1\ton]$ with $\Qinf$, a contradiction. Suppose now that $Q[1]$ is below $\Pinf$. Then $Q'\subfty[n\to 2n]$ stays above $Q[1\ton]$ which intersects $P[1\ton]$, and thus $Q'\subfty[n\to2n]$ intersects $P[1\ton]$.  We have found three intersection points of $P[1\ton]$ with $Q'_\infty$, a contradiction. We have shown that $f_1$ is \repfree.

Similarly, we check that $\Pinf^{(f_2)}$ is equivalent to $(P[0\to1])_\infty:=\bigcup_{t\in\Z}(P[0\to1]+t\del_2)$ and use it to deduce that $f_2$ is \repfree.
\end{proof}

Note that $P[1\ton]$ has \emph{low slope} since it connects $P[1]$ to $P[n]\approx P[1]+\del_1$  while $P[0\to1]$ has \emph{high slope} since it connects $P[0]$ to $P[1]\approx P[0]+\del_2$; cf. Lemma~\directref{lemma:F_properties_easy}{item:sl_1<sl<sl_2}.%
 The next result states that a shifted segment of high slope cannot cross a segment of low slope from above.

\begin{proposition}\label{prop:cross_below}
Let $\alpha\in\Z^2$ and $Q:=P+\alpha$. Then $Q[0 \to 1]$ cannot cross $P[1\ton]$ from above.
\end{proposition}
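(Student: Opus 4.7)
The strategy is proof by contradiction. Assume $Q[0\to 1]$ crosses $P[1\ton]$ from above at a point $x^*$. Parameterize $\Pinf$ by its horizontal coordinate $x$ as the graph of a piecewise linear function $y(x)$, whose slope on the unit segment $[r,r+1]$ equals $(f(\lab_r)-\lab_r)/\n$. Writing $y_P(x)=y(x)$ and $y_Q(x)=y(x-\n(\alpha))+\k(\alpha)$, the ``from above'' condition says that $y_Q-y_P$ changes sign from positive to negative at $x^*$, i.e., the local slope of $Q[0\to 1]$ at $x^*$ is strictly less than the local slope of $P[1\ton]$ at $x^*$. Letting $s\in[0,n_2)$ index the segment of $P[0\to 1]$ containing $x^*-\n(\alpha)$ and $r\in[n_2,\n)$ the segment of $P[1\ton]$ containing $x^*$, this gives the \emph{slope anomaly}
\[
f(j_s) - j_s \;<\; f(j_r) - j_r,
\]
where $j_s\in\{0,\bar f(0),\ldots,\bar f^{n_2-1}(0)\}$ and $j_r\in\{1,\bar f(1),\ldots,\bar f^{n_1-1}(1)\}$ are the starting labels of the two segments (lying in the cycles of $\bar f^{(0,1)}_2$ and $\bar f^{(0,1)}_1$ respectively).

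I would then leverage this slope anomaly to produce an intersection configuration forbidden by repetition-freeness. Geometrically, aligning segment $s$ of $P[0\to 1]$ with segment $r$ of $P[1\ton]$ via a shift close to $p_r-p_s$ forces a transversal crossing; combined with the near-tangent intersections at $P[1]$ and $P[n]$ coming from the ``$\approx$'' identification of \cref{notn:approx}, this should yield $|\Pinf\cap(\Pinf+\alpha^\ast)|>2$ modulo $\del$ for a suitable integer $\alpha^\ast$, contradicting the repetition-free hypothesis on $f$. Alternatively --- and probably more cleanly --- the slope anomaly can be transferred to $\Pinf^{(f_2)}$ via the equivalence $(P[0\to 1])_\infty\simeq\Pinf^{(f_2)}$ proven in the preceding lemma, and the contradiction drawn from the repetition-freeness of $f_2$, which was just established.

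The main obstacle is upgrading the pointwise slope anomaly into a rigorous excess intersection count. The natural alignment shift $p_r-p_s$ generally has non-integer vertical coordinate, so one must choose an integer perturbation of it and verify that the slope inequality is robust enough to preserve the extra transversal crossings that it forces. The vertex case, where $x^*$ coincides with an endpoint of a segment, should reduce to the generic case by slightly perturbing $\alpha$. Finally, crossings located near $P[1]$, $P[n]$, or their $\del$-translates require care because the ``$\approx$'' identifications of \cref{notn:approx} allow for near-tangent contacts that must not be overcounted when tallying up $|\Pinf\cap(\Pinf+\alpha^\ast)|$.
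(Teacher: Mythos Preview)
Your proposal is not a proof but a plan, and the plan has a genuine gap at its central step. The slope anomaly you extract---that some unit segment of $P[0\to1]$ has strictly smaller slope than some unit segment of $P[1\to n]$---is a correct local observation, but it is \emph{not} by itself incompatible with repetition-freeness. Indeed, for any $f\in\Bknc$ with nonconstant $j\mapsto f(j)-j$, one can typically find such a pair of segments, one in each subpath; so the anomaly alone cannot yield a contradiction. You recognise this and say you ``would then leverage'' the anomaly to manufacture an integer shift $\alpha^\ast$ with $|\Pinf\cap(\Pinf+\alpha^\ast)|>2$, but that leveraging is exactly the hard part and you have not carried it out. The obstacles you list---that the natural alignment shift $p_r-p_s$ is not integral, that vertex cases need separate treatment, and that the near-tangent regions around $P[1]$ and $P[n]$ must not be miscounted---are real, and nothing in your outline explains how to overcome them. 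The alternative route via $\Pinf^{(f_2)}$ faces the same problem: you still have to produce a specific shift and verify excess intersections.

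The paper's argument is entirely different and avoids slopes altogether. It performs a case split on whether $Q[0]$ and $Q[1]$ lie above or below $\Pinf$. The case $Q[0]$ below, $Q[1]$ above immediately forces three crossings of $Q[0\to1]$ with $\Pinf$. In the remaining (symmetric) case $Q[0]$ above $\Pinf$, one introduces $P':=P+\del_2$ and $Q':=Q+\del_2$ and tracks the path $Q[1\to 1+n]$: it must cross $\Pinf$ from above exactly once (at a point in $Q[n\to 1+n]$), yet it is trapped below $Q'$, which itself crosses $P'[1\to n]$ from above; this forces $Q[1\to 1+n]$ to cross $P'$ twice before it can reach $\Pinf$, and since $\Pinf$ lies below $\Pinf'$ the required final crossing of $\Pinf$ becomes impossible. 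The argument is purely topological crossing-counting with the fixed shifts $\del_1,\del_2$; no ad hoc alignment shift is needed.
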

\begin{proof}
Suppose otherwise that $Q[0 \to 1]$ crosses $P[1\ton]$ from above. We consider the cases according to the positions of $Q[0]$ and $Q[1]$ relative to $\Pinf$.  First, assume that $Q[0]$ is below $\Pinf$ and $Q[1]$ is above $\Pinf$. Then $Q[0\to1]$ intersects $\Pinf$ at least $3$ times, a contradiction.

From now on we assume that $Q[0]$ is above $\Pinf$. (The case of $Q[1]$ being below $\Pinf$ is completely analogous.) 
Let $P':=P+\del_2$ and $Q':=Q+\del_2$. Since $Q[0]$ is above $\Pinf$, $Q[1]\approx Q'[0]$ are both above $\Pinf'$. Moreover, $Q$ crosses $P[1\ton]$ from above, thus $Q'$ crosses $P'[1\ton]$ from above.

\begin{definition}\label{dfn:vert_above}
For an integer point $q$ of $Q$, we say that \emph{$q$ is vertically above $P$} if there exists an integer point $p$ of $P$ with $\n(q)=\n(p)$, and $q$ is above $p$. 
\end{definition}

\begin{figure}
\includegraphics[width=0.4\textwidth]{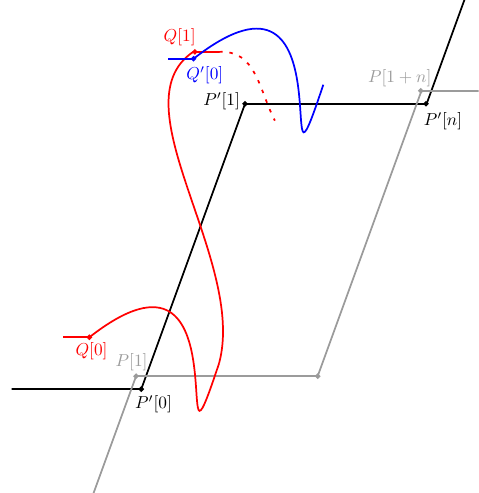}
  \caption{\label{fig:cross_below} Proof of \cref{prop:cross_below}.}
\end{figure}
 
Since $Q'[0\to1]$ intersects $P'[1\ton]$, it follows that $Q'[0]$ is vertically above $P'$. Consider the path $Q\subfty[1\to1+n]$. It crosses $\Pinf$ from above at a single point which belongs to $Q\subfty[n\to1+n]\cap P\subfty[1+n\to2n]$. Moreover, it stays below $Q'$ which crosses $P'[1\ton]$ from above. Thus $Q\subfty[1\to1+n]$ crosses $P'$ from above. Since $Q\subfty[1\to1+n]$ cannot cross $P\subfty[1\to1+n]$ from above, it must cross $P'$ from below. The remaining part of $Q\subfty[1\to1+n]$ still has to cross $\Pinf$ from above, however, it cannot cross $\Pinf'$ since it has already crossed $P'$ twice. Since $\Pinf$ is below $\Pinf'$, we get a contradiction. See \cref{fig:cross_below}. 
\end{proof}

\begin{lemma}\label{lemma:inf_slopes}
Let $\alpha,\beta\in [k-1]\times [n-1]$ be such that $\slp(\alpha)>\slp(\beta)$. Suppose that there are two subpaths $P[a\tob]$ and $P[c\tod]$ of $\Pinf$ such that $P[a\tob]$ crosses $P[a\tob]+\alpha$ while $P[c\tod]$ crosses $P[c\tod]+\beta$. Then there exist $s,t\in\Z$ such that $P[a\tob]+s\alpha$ crosses $P[c\tod]+t\beta$ from below.
\end{lemma}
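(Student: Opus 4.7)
The plan is to exploit the slope asymmetry globally by extending both finite pieces to infinite periodic ``staircases'' and then comparing their asymptotic behavior.

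First I would build, from the crossing $P[a\to b]\cap(P[a\to b]+\alpha)\neq\emptyset$, an infinite object $\Pi_\alpha:=\bigcup_{s\in\Z}(P[a\to b]+s\alpha)$. Because the hypothesis forces $P[a\to b]+s\alpha$ to cross $P[a\to b]+(s+1)\alpha$ for every $s$ (by translation-invariance of the crossing), $\Pi_\alpha$ deserves to be thought of as a bi-infinite ``path'' whose overall direction of travel is $\alpha$ and whose transverse oscillation is uniformly bounded by the diameter of $P[a\to b]$. Analogously I would build $\Pi_\beta:=\bigcup_{t\in\Z}(P[c\to d]+t\beta)$ with asymptotic direction $\beta$.

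Next I would invoke $\slope(\alpha)>\slope(\beta)$ to compare the two staircases far from the origin. At a horizontal coordinate $x\gg 0$ the $\alpha$-line $\R\alpha$ sits strictly above the $\beta$-line $\R\beta$, and by the transverse-oscillation bound both $\Pi_\alpha$ and $\Pi_\beta$ are pinned within bounded tubes around their respective lines; hence $\Pi_\alpha$ is strictly above $\Pi_\beta$ for $x\gg 0$. Symmetrically, $\Pi_\alpha$ lies strictly below $\Pi_\beta$ for $x\ll 0$. Hence the two staircases must meet, and by a left-to-right sign-counting argument the number of ``from-below'' crossings minus the number of ``from-above'' crossings equals $+1$; in particular at least one crossing of $\Pi_\alpha$ with $\Pi_\beta$ is from below. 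Any such crossing occurs between a single translate $P[a\to b]+s\alpha$ and a single translate $P[c\to d]+t\beta$, yielding the desired pair $(s,t)$.

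The main obstacle is making the asymptotic comparison rigorous. Neither $\Pi_\alpha$ nor $\Pi_\beta$ is a genuine graph of a function: both are unions of wiggly finite arcs spliced at self-intersections, so ``above/below'' at a given $x$ needs to be defined with care (one may take the supremum or infimum of $y$-values over the fiber). The key technical step is to prove a uniform tube bound of the form ``every point of $\Pi_\alpha$ lies within distance $C_\alpha$ of the line $\R\alpha$ translated by a fixed vector'', where $C_\alpha$ depends only on the diameter of $P[a\to b]$; this requires using that consecutive translates in the staircase share a crossing point, which forces the endpoint drift from $P[a\to b]+s\alpha$ to $P[a\to b]+(s+1)\alpha$ to be exactly $\alpha$. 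Once this tube bound is in place, the slope inequality $\slope(\alpha)>\slope(\beta)$ makes the asymptotic sign comparison unambiguous and the ``at least one crossing from below'' conclusion follows immediately from the intermediate-value principle.
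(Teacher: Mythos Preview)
Your approach is essentially the same as the paper's: build the two infinite staircases $\bigcup_{s}(P[a\to b]+s\alpha)$ and $\bigcup_{t}(P[c\to d]+t\beta)$, use the slope inequality to force opposite vertical orderings at the two ends, and extract a from-below crossing by an intermediate-value argument. The paper, however, sidesteps the technical worries you raise (sup/inf over fibers, tube bounds, signed crossing counts) by a cleaner device: since each staircase is path-connected, one can choose inside it a single monotone piecewise-linear curve $S_\alpha$ (resp.\ $S_\beta$) hitting each integer horizontal coordinate $r$ at a unique point $x_{\alpha,r}$ (resp.\ $x_{\beta,r}$) with increasing vertical coordinate; then one simply takes the smallest $r$ at which $x_{\alpha,r}$ is no longer below $x_{\beta,r}$ and reads off the desired translates directly. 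This avoids having to relate crossings of envelope functions back to crossings of individual translates, which is the one place your sketch would need an extra sentence of justification.
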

\begin{proof}

Consider the two infinite unions $\Ra:=P[a\tob]+\Z\alpha$ and $\Rb:=P[c\tod]+\Z\beta$. Observe that $\Ra$ (resp., $\Rb$) is a path-connected subset of $\R^2$. Thus it contains an infinite piecewise linear curve $S_\alpha$ (resp., $S_\beta$) such that for each $r\in\Z$, $S_\alpha$ (resp., $S_\beta$) contains a unique point $x_{\alpha,r}$ (resp., $x_{\beta,r}$) satisfying $\n(x_{\alpha,r})=\n(x_{\beta,r})=r$. Here, we are additionally assuming that the vertical coordinates of $x_{\alpha,r}$ and $x_{\beta,r}$ are increasing functions of $r$. 

When $r\ll 0$, $x_{\alpha,r}$ is below $x_{\beta,r}$, and when $r\gg0$, $x_{\alpha,r}$ is above $x_{\beta,r}$. Let $r\in\Z$ be the smallest integer such that $x_{\alpha,r}$ is \emph{not} below $x_{\beta,r}$. Thus $x_{\alpha,r-1}$ is below $x_{\beta,r-1}$ and either $x_{\alpha,r}=x_{\beta,r}$ or $x_{\alpha,r}$ is above $x_{\beta,r}$. In each case, it is straightforward to check that a shift $P[a\tob]+s\alpha$ (passing through either $x_{\alpha,r}$ or $x_{\alpha,r-1}$ or both) crosses a shift $P[c\tod]+t\beta$ (passing through either $x_{\beta,r}$ or $x_{\beta,r-1}$ or both) from below.
\end{proof}
\begin{remark}\label{rmk:inf_slopes}
The same argument applies when either $(a,b,\alpha)=(1,n,\del_1)$ or $(c,d,\beta)=(0,1,\del_2)$. (Since $\slp(\del_1)<\slp(\del_2)$ by Lemma~\directref{lemma:F_properties_easy}{item:sl_1<sl<sl_2}, we cannot have both.) Suppose for instance that $(c,d,\beta)=(0,1,\del_2)$. Even though $P[0\to1]$ does not intersect $P[0\to1]+\del_2$, since we identify $P[1]\approx P[0]+\del_2$, the union $\Rb=P[0\to1]+\Z\del_2$ still contains an infinite connected (modulo our identification) piecewise linear curve.
\end{remark}

Given two paths $Q,P$, we say that \emph{$Q$ is above $P$} if whenever two integer points $q\in Q$ and $p\in P$ satisfy $\n(q)=\n(p)$, we have that $q$ is above $p$. (This condition is vacuously true if the projections of $Q$ and $P$ onto the horizontal axis do not overlap.)

\begin{lemma}\label{lemma:impossible}
Let $\alpha\in[k-1]\times[n-1]$ and $Q:=P+\alpha$. Assume that $Q[1]$ is above $\Pinf$. Then $Q[1\ton]$ and $P[1\ton]$ cannot intersect twice. 
\end{lemma}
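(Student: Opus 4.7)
The plan is to argue by contradiction, supposing that $Q[1\ton]$ and $P[1\ton]$ meet at exactly two points $x_1,x_2$. Since $Q[1]$ is above $\Pinf$, the two crossings produce the sign pattern ``above, cross downward at $x_1$, below, cross upward at $x_2$, above'', which forces $Q[n]$ also to lie above $\Pinf$. Combining the \repfree hypothesis with \cref{prop:F_paths}, one gets $|\Pinf\cap\Qinf|\le 2$ modulo $\del$; since $x_1,x_2$ lie in distinct $\del$-classes (their horizontal separation is less than $n$), they account for every intersection class of $\Qinf$ with $\Pinf$. In particular, the high-slope arc $Q[0\to 1]$, whose horizontal range contains no $\del$-translate of $x_1$ or $x_2$, must be disjoint from $\Pinf$; together with $Q[1]$ above $\Pinf$, this forces $Q[0]$ to also lie above $\Pinf$.

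With $Q[0],Q[1],Q[n]$ all above $\Pinf$, I would apply \cref{lemma:inf_slopes} (using \cref{rmk:inf_slopes}) to the two families of crossings: $P[1\ton]$ is crossed by $P[1\ton]+\alpha$ (by assumption), and $P[0\to 1]$ is ``crossed'' by $P[0\to 1]+\del_2$ via the identification $P[1]\approx P[0]+\del_2$. When $\slp(\alpha)>\slp(\del_2)$, the lemma produces integers $s,t$ such that $P[0\to 1]+t\del_2$ crosses $P[1\ton]+s\alpha$ from above; translating both paths by $-s\alpha$ yields a shift of $P[0\to 1]$ that crosses $P[1\ton]$ from above, directly contradicting \cref{prop:cross_below}.

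When $\slp(\alpha)<\slp(\del_2)$, the naive application of \cref{lemma:inf_slopes} produces a crossing in the opposite direction, which is permitted by \cref{prop:cross_below}. To handle this case I plan to use the ``above $\Pinf$'' data on $Q[0],Q[1],Q[n]$ to construct an auxiliary shift $\alpha'$, morally $\alpha-\del_2$ brought back into $[k-1]\times[n-1]$ modulo $\del$, that still lies in $\F(f)$ and satisfies $\slp(\alpha')>\slp(\del_2)$; re-applying the previous paragraph to $\alpha'$ then supplies the desired shift of $P[0\to 1]$ crossing $P[1\ton]$ from above, contradicting \cref{prop:cross_below}.

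The main obstacle will be the slope-modification step for $\slp(\alpha)<\slp(\del_2)$: rigorously establishing that $\alpha'$ is a genuine inversion (i.e.\ has positive multiplicity in $\F(f)$) and that the associated pair of crossings is nontrivial enough to feed back into \cref{lemma:inf_slopes} in the direction that contradicts \cref{prop:cross_below}. This will likely require a careful study of the integer points of $Q[1\ton]$ that dip below $\Pinf$ between $x_1$ and $x_2$, together with the semigroup-type reasoning behind \cref{lemma:F_properties_easy} and \cref{rmk:semigroup_strong}.
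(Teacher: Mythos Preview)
Your opening moves are sound and in fact slightly cleaner than the paper's: using the \repfree hypothesis and \cref{prop:F_paths} to conclude that the two crossings $x_1,x_2$ exhaust $\Pinf\cap\Qinf$ modulo $\del$, and hence that $Q[0\to1]$ is disjoint from $\Pinf$ and $Q[0]$ lies above $\Pinf$, is correct. Your treatment of the case $\slp(\alpha)>\slp(\del_2)$ via \cref{lemma:inf_slopes}, \cref{rmk:inf_slopes}, and \cref{prop:cross_below} is also essentially the paper's argument.

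The genuine gap is in the case $\slp(\alpha)\le\slp(\del_2)$. Your plan is to modify $\alpha$ into some $\alpha'\equiv\alpha-\del_2\pmod\del$ with $\slp(\alpha')>\slp(\del_2)$ and then re-run the high-slope argument. This does not work: if $\slp(\alpha)<\slp(\del_2)$ and $\alpha-\del_2$ still lies in the positive quadrant, a direct computation shows $\slp(\alpha-\del_2)<\slp(\del_2)$ as well; and reducing modulo $\del$ replaces $\alpha-\del_2$ by $\alpha+\del_1$, whose slope is a weighted average of $\slp(\alpha)$ and $\slp(\del_1)<\slp(\del_2)$, so it is again at most $\slp(\del_2)$. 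No iteration of such shifts will push the slope above $\slp(\del_2)$, so there is no way to reduce to the high-slope case along these lines.

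The paper's proof takes a different route for $\slp(\alpha)\le\slp(\del_2)$. It establishes the stronger statement that the entire infinite union $(P[0\to1])_\infty$ lies below $(Q[0\to1])_\infty$; this is obtained by an inductive argument on $Q,Q-\del_2,Q-2\del_2,\dots$, showing that each successive shift either still satisfies the three hypotheses ($Q[1]$ above $\Pinf$; two intersections of $Q[1\ton]$ with $P[1\ton]$; $\n(Q[1])\ge\n(P[1])$) or already has its high-slope arc above $P[0\to1]$ and to its left. Once this is known, the case $\slp(\alpha)\le\slp(\del_2)$ contradicts \cref{slope:<} applied to $(P[0\to1])_\infty$ directly, with no need to manufacture a high-slope $\alpha'$. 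The semigroup reasoning you mention plays no role here; the work is entirely in tracking how the three hypotheses propagate under subtraction of $\del_2$.
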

\begin{proof}
Assume otherwise that they intersect twice. Our temporary goal is to show that 
\begin{equation}\label{eq:imposs:Pinf_below_Qinf}
  \text{$(P[0\to1])_\infty$ is below $(Q[0\to1])_\infty$.}
\end{equation}
We observe that $Q$ satisfies the following properties:
\begin{theoremlist}[label=(\alph*)]
\item\label{item:imposs:1} $Q[1]$ is above $\Pinf$;
\item\label{item:imposs:2} $Q[1\ton]$ intersects $P[1\ton]$ twice;
\item\label{item:imposs:3} $\n(Q[1])\geq \n(P[1])$.
\end{theoremlist}
Let $Q':=Q-\del_2$ so that $Q'[1]\approx Q[0]$. In order to show~\eqref{eq:imposs:Pinf_below_Qinf}, it suffices to prove that if $Q$ satisfies~\itemref{item:imposs:1}--\itemref{item:imposs:3} then either 
\begin{theoremlist}
\item\label{item:imp_concl1} $Q'$ satisfies~\itemref{item:imposs:1}--\itemref{item:imposs:3}, or
\item\label{item:imp_concl2} $Q'[0\to1]$ is above $P[0\to1]$ with $\n(Q'[0])<\n(P[0])$.
\end{theoremlist}
If~\itemref{item:imp_concl1} holds for $Q'$ then we proceed by induction, applying the same argument to $Q'-t\del_2$ for $t=1,2,\dots$, until we find that~\itemref{item:imp_concl2} holds for some $Q-s\del_2$ with $s>0$. But then all integer points of $P[0\to1]$ are below $\bigcup_{t=0}^s(Q-t\del_2)$, which proves~\eqref{eq:imposs:Pinf_below_Qinf}.

Assume that $Q$ satisfies~\itemref{item:imposs:1}--\itemref{item:imposs:3}. Since $Q[1\ton]$ and $P[1\ton]$ intersect twice, $Q[0\to1]$ is above $\Pinf$ and $P[0\to1]$ is below $\Qinf$. Then $Q'[0\to1]$ is  above $\Pinf'$, where $P':=P-\del_2$. We have the following situation:
\begin{itemize}
\item apart from the double crossing, $Q\subfty'[1-n\to1]$ is below $\Qinf$;
\item $Q\subfty'[1-n]$ and $Q'[1]$ are above $\Pinf$;
\item $Q\subfty'[1-n\to0]$ intersects $P\subfty'[1-n\to0]$ twice;
\item $P'[1]\approx P[0]$ is below $\Qinf'$.
\end{itemize}
These statements imply that  $Q\subfty'[1-n\to1]$ crosses $P\subfty[-n\to0]$ twice, first from above and then from below. Moreover, the second crossing (from below) must belong to $P\subfty[1-n\to0]$ since it has to come after both crossings of $Q\subfty[1-n\to0]$ with $P\subfty'[1-n\to0]$. In particular, no part of $Q\subfty'[1-n\to1]$ is below $P[0\to1]$, and thus $Q'[0\to1]$ is above $P[0\to1]$.

Suppose that $\n(Q'[1])<\n(P[1])$. Then $\n(Q'[0])<\n(P[0])$. We have just shown that $Q'[0\to1]$ is above $P[0\to1]$, so we arrive at case~\itemref{item:imp_concl2}.

Suppose now that $\n(Q'[1])\geq \n(P[1])$. Then $Q'$ satisfies~\itemref{item:imposs:1} and~\itemref{item:imposs:3}. Moreover, we also have $\n(Q\subfty'[1-n])\geq\n(P\subfty[1-n])$ and $\n(Q[0])\geq\n(P[0])$. In view of the above statements, this implies that $Q\subfty'[1-n\to0]$ intersects $P\subfty[1-n\to0]$ twice, i.e., $Q'$ also satisfies~\itemref{item:imposs:2}. We arrive at case~\itemref{item:imp_concl1}. We are done with the proof of~\eqref{eq:imposs:Pinf_below_Qinf}, and now we will use it to finish off the proof of the lemma. 

Observe that if $\slope(\alpha)\leq \slope(\del_2)$ then we get a contradiction by \cref{slope:<} and~\eqref{eq:imposs:Pinf_below_Qinf}. Thus $\slope(\alpha)>\slope(\del_2)$. By \cref{lemma:inf_slopes} and \cref{rmk:inf_slopes}, for some $s,t, \in \Z$, we have that $P[1\ton]+s\alpha$ crosses $P[0\to1]+t\del_2$ from below, contradicting \cref{prop:cross_below}. 
\end{proof}

\begin{lemma}\label{lemma:corner_slope_weak}
Let $\alpha\in \F(f)$. 
\begin{theoremlist}
\item\label{item:corner_slope_weak1} If $\slope(\alpha)\leq \slope(\del_1)$ then $\alpha\in \Fb(f_1)+\Z\del_1$.
\item\label{item:corner_slope_weak2} If $\slope(\alpha)\geq \slope(\del_2)$ then $\alpha\in \Fb(f_2)+\Z\del_2$.
\end{theoremlist}
\end{lemma}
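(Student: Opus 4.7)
The plan is to reduce the statement $\alpha\in\Fb(f_1)+\Z\del_1$ (for $\alpha\notin\Z\del_1$) to exhibiting some $\alpha'\in\alpha+\Z\del_1$ for which $P[1\ton]$ meets $P[1\ton]+\alpha'$ in $\R^2$. Via the equivalence $\Pinf^{(f_1)}\equiv(P[1\ton])_\infty$ established when showing $f_1$ is \repfree, and via \cref{prop:F_paths} applied to the \repfree $f_1$, this would imply $\alpha\bmod\Z\del_1\in\F(f_1)$, completing the proof of (i). Part (ii) is then proved by the analogous argument exchanging the roles of $\del_1,\del_2$ and of $P[0\to 1],P[1\ton]$ and applying the symmetric variant of \cref{prop:cross_below} (obtainable via the central symmetry of \cref{cor:cs}).

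By \cref{prop:F_paths} and the hypothesis that $f$ is \repfree, $\Pinf$ meets $\Qinf:=\Pinf+\alpha$ in exactly two points modulo $\del$. I would classify each such intersection by the types of edges meeting at it: each edge of $\Pinf$ is a $\Z\del$-shift of either the high-slope segment $P[0\to 1]$ or of a low-slope sub-edge of $P[1\ton]$, and likewise for $\Qinf$. The bound $\slope(\alpha)\le\slope(\del_1)<\slope(\del_2)$ (the second inequality from~\directref{lemma:F_properties_easy}{item:sl_1<sl<sl_2}) rules out ``high-high'' intersections immediately: the relevant edges are parallel translates whose separating vector $\alpha+r\del$ ($r\in\Z$) is not parallel to the edge direction, so no pair of such edges shares a point.

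The crux is to eliminate the mixed ``high-low'' and ``low-high'' intersections. Suppose $Q[0\to 1]+s\del$ meets $P[1\ton]+t\del$. Translating the whole picture by $-t\del$ reduces this to $Q'[0\to 1]$ meeting $P[1\ton]$ for $Q':=P+\alpha'$ with $\alpha':=\alpha+(s-t)\del$. By \cref{prop:cross_below} applied to $Q'$, the crossing must be ``from below'', which forces $Q'[1]$ to lie above $\Pinf$; \cref{lemma:impossible} then implies that $Q'[1\ton]$ and $P[1\ton]$ meet at most once. A careful count of the remaining potential crossings, combined with the central symmetry of $\F(f)$ from \cref{cor:cs}, drops the total intersection count of $\Pinf\cap\Qinf$ modulo $\del$ strictly below two, contradicting \cref{prop:F_paths}. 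The ``low-high'' case is handled symmetrically.

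Both intersections being of ``low-low'' type, one obtains $P[1\ton]+t_i\del$ meeting $P[1\ton]+\alpha+s_i\del$ for $i=1,2$; translating by $-t_i\del$ yields $P[1\ton]$ meeting $P[1\ton]+\alpha+(s_i-t_i)\del$. Using \cref{cor:cs} to rechoose representatives of the two intersection points modulo $\del$, the $\del$-shift can be absorbed into a $\del_1$-shift so that $P[1\ton]$ meets $P[1\ton]+\alpha+m\del_1$ for some $m\in\Z$, as desired. The main obstacle is the mixed-intersection elimination, which requires a careful interplay between \cref{prop:cross_below}, \cref{lemma:impossible}, and the \repfree hypothesis; a secondary subtlety is the final conversion of a $\del$-shift to a $\del_1$-shift, which uses the central symmetry of the two intersection points in $\Pinf\cap\Qinf$.
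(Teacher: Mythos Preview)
Your ``high--high'' elimination rests on a false geometric picture: you treat $P[0\to 1]$ as a single straight segment (``the relevant edges are parallel translates whose separating vector \ldots is not parallel to the edge direction''), but $P[0\to 1]$ is a piecewise-linear path with $n_2\ge 2$ edges of varying slopes. Two translates of such a path can intersect even when the shift is not parallel to any individual edge; such intersections are exactly the elements of $\F(f_2)$, and the slope hypothesis on $\alpha$ does not by itself exclude them. The mixed case is likewise not eliminated: \cref{prop:cross_below} constrains only the \emph{direction} of a crossing of $Q'[0\to1]$ with $P[1\to n]$ and does not place $Q'[1]$ above all of $\Pinf$; the ``careful count'' is never carried out, and the hypotheses of \cref{lemma:impossible} (two intersections of $Q'[1\to n]$ with $P[1\to n]$) are not verified. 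Finally, even granting the ``low--low'' conclusion, the $\del$-to-$\del_1$ conversion fails: from $P[1\to n]\cap(P[1\to n]+\alpha+m\del)\neq\varnothing$ and the $\del_1$-periodicity of $(P[1\to n])_\infty$ you obtain only $\alpha+m\del_2\in\Fb(f_1)+\Z\del_1$, and central symmetry does not remove the stray $m\del_2$.

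The paper's argument is different and avoids any case analysis on edge types. After disposing of the case $\slp(\alpha)=\slp(\del_1)$ via \cref{slope:=}, it applies \cref{lemma:inf_slopes} (with \cref{rmk:inf_slopes}) to the pair $(\del_1,\alpha)$ to produce $s,t\in\Z$ such that $P[1\to n]+s\del_1$ crosses $P+t\alpha$ from below. By \cref{prop:cross_below} this crossing cannot lie in $P[0\to1]+t\alpha$, so it lies in $P[1\to n]+t\alpha$, giving $t\alpha\in\Fb(f_1)+\Z\del_1$. A single appeal to the semigroup property, Lemma~\directref{lemma:F_properties_easy}{item:semigroup}, then upgrades $t\alpha$ to $\alpha$. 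The key device you are missing is precisely this passage through a multiple $t\alpha$, which sidesteps the need to pin down the exact location of the two $\Pinf\cap\Qinf$ crossings.
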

\begin{proof} 
We prove~\itemref{item:corner_slope_weak1}.  The proof of~\itemref{item:corner_slope_weak2} is completely analogous.

First, if $\slope(\alpha)=\slope(\del_1)$ then $\alpha\in \Fb(f_1)+\Z\del_1$ by \cref{slope:=}. Assume that $\slope(\alpha)<\slope(\del_1)$. By \cref{lemma:inf_slopes} and \cref{rmk:inf_slopes}, there are $s,t\in\Z$ such that  $P[1\ton]+s\del_1$ crosses $P+t\alpha$ from below. This crossing cannot belong to $P[0\to1]+t\alpha$ by \cref{prop:cross_below}. Thus it belongs to $P[1\ton]+t\alpha$, so $t\alpha\in \Fb(f_1)+\Z\del_1$. By Lemma~\directref{lemma:F_properties_easy}{item:semigroup}, we get $\alpha\in \Fb(f_1)+\Z\del_1$.
\end{proof}

The following straightforward result describes a natural transformation that swaps the notions of ``above'' and ``below.'' We refer to it as the \emph{\rotn-rotation}.

\begin{proposition}%
\label{prop:rotn}
For $f\in\Bknc$, let $g:\Z\to\Z$ be given by 
\begin{equation*}%
  g(j):=n-f^{-1}(-j) \quad\text{for all $j\in\Z$.}
\end{equation*}
Then $g\in\Bknc$ and the paths $\Pf$ and $\Pg$ are related as
\begin{equation}
  \Pg=\del-\Pf.
\end{equation}
For each point $x\in\R^2$, $x$ is above (resp., below) $\Pfinf$ if and only if $\del-x$ is below (resp., above) $\Pginf$. \qed
\end{proposition}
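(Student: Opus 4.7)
The plan is to verify the three assertions in sequence: (i) $g\in\Bknc$, (ii) the path identity $\Pg=\del-\Pf$, and (iii) the above/below correspondence. Each step is a routine unwinding of definitions; the only care needed is bookkeeping with signs and periodicity.

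First, I would check $g\in\Bknc$ axiom by axiom. The $n$-periodicity $g(j+n)=g(j)+n$ follows from the identity $f^{-1}(m-n)=f^{-1}(m)-n$, which is itself a consequence of $f(i+n)=f(i)+n$. Bijectivity of $g$ is inherited from that of $f^{-1}$. The bounded-affine inequality for $g$ translates, via the substitution $i=f^{-1}(-j)$, into the analogous inequality $i<f(i)<i+n$ for $f$. To see that $\bar g$ is an $n$-cycle, I would observe that (up to the paper's sign convention) $\bar g$ is conjugate to $\fb^{-1}$ by the involution $j\mapsto -j$ on $\Z/n\Z$; since cycle type is preserved under both inversion and conjugation, $\bar g$ inherits the $n$-cycle property from $\fb$. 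Finally, $\k(g)=k$ follows by a short sum computation: reindex $\sum_{i=1}^n(g(i)-i)$ by $i\mapsto f^{-1}(-i)$ (a bijection of residues modulo $n$, since $\fb^{-1}$ is an $n$-cycle) and compare with $\sum_{i=1}^n(f(i)-i)=nk$.

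For the path identity, the key claim is the vertex-by-vertex equality $p_{g,\,n-r}=\del-p_{f,r}$ for every $r\in\Z$, where $p_{f,r}=(f^r(0)/n,r)$ and $p_{g,r}=(g^r(0)/n,r)$. Horizontal coordinates match by construction. For the vertical coordinates, the required identity is $g^{n-r}(0)=nk-f^r(0)$. The base case $g^n(0)=nk$ holds since $g\in\tSp_n^{(k)}$ together with the fact that the orbit $\{g^s(0)\bmod n:0\le s<n\}$ visits every residue (because $\bar g$ is an $n$-cycle), so that $g^n(0)=\sum_{s=0}^{n-1}(g^{s+1}(0)-g^s(0))=\sum_{i=0}^{n-1}(g(i)-i)=nk$. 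The inductive step follows by applying $g$ to both sides of $g^s(0)=nk-f^{n-s}(0)$ and using $f^{-1}(m-nk)=f^{-1}(m)-nk$. The vertex identity then upgrades immediately to $\Pg=\del-\Pf$ as piecewise linear paths, and hence by $\del$-periodicity to $\Pginf=\del-\Pfinf$.

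The above/below statement is now an immediate geometric consequence. The map $x\mapsto\del-x$ is the point reflection through $\del/2$, which in particular is orientation-reversing in the vertical direction. Since ``above'' and ``below'' a piecewise linear path are defined by vertical comparison at a common horizontal coordinate, a point $x$ lies above $\Pfinf$ if and only if $\del-x$ lies below $\del-\Pfinf=\Pginf$, and vice versa. There is no genuine obstacle in the proof; the only subtlety, such as it is, lies in keeping sign conventions straight throughout stage (i).
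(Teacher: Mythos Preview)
Your overall strategy—verify $g\in\Bknc$ axiom by axiom, then establish the vertex identity $p_{g,n-r}=\del-p_{f,r}$ by induction, then deduce the above/below duality from the point reflection $x\mapsto\del-x$—is exactly the routine unwinding the paper has in mind (the paper gives no argument beyond ``straightforward'' and a \qed). However, there is a genuine gap in your execution of step~(i), and it stems from a typo in the statement itself.

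With the formula $g(j)=n-f^{-1}(-j)$ as printed, the bounded inequality \emph{fails}. Carry out your substitution $i=f^{-1}(-j)$ explicitly: then $g(j)=n-i$ and $j=-f(i)$, so the desired inequality $g(j)<j+n$ becomes $n-i<-f(i)+n$, i.e.\ $f(i)<i$, contradicting $i<f(i)$. In fact one gets $j+n<g(j)<j+2n$, so this $g$ lies in $\tSp_n^{(k+n)}$, not $\tSp_n^{(k)}$; a quick check with $f=f_{1,3}$ already gives $g(j)=j+4$. The same extra $n$ breaks your inductive step for the path identity: applying $g$ to $g^s(0)=nk-f^{n-s}(0)$ yields $n+nk-f^{n-s-1}(0)$ rather than $nk-f^{n-s-1}(0)$.

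The intended formula is $g(j)=-f^{-1}(-j)$ (equivalently $g(j)=n-f^{-1}(n-j)$, using periodicity). With that correction every step of your outline goes through verbatim: the bounded inequality becomes $j<-f^{-1}(-j)<j+n$, which is exactly $-j-n<f^{-1}(-j)<-j$; the sum $\sum(g(j)-j)$ reindexes to $\sum(f(i)-i)=nk$; and your induction for $g^{n-r}(0)=nk-f^r(0)$ now closes cleanly since $g(nk-m)=-f^{-1}(m-nk)=nk-f^{-1}(m)$. The lesson is that when a verification is advertised as routine, you still have to actually do the two-line inequality check rather than assert that it ``translates''—here that check would have flagged the misprint.
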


Given $\alpha,\beta\in\R^2$, we say that $\alpha$ is \emph{weakly southwest} of $\beta$ and write $\alpha\wsw \beta$ if $\n(\alpha)\leq \n(\beta)$ and $\k(\alpha)\leq \k(\beta)$. We write $\alpha\sw\beta$ if $\alpha\wsw\beta$ and $\alpha\neq\beta$.

\begin{lemma}\label{lemma:southwest}
Let $\alpha\in \F(f)$ and $\beta:=\del-\alpha$.
\begin{theoremlist}
\item\label{item:southwest1} If $\slope(\alpha)\leq \slope(\del_1)$ and $\alpha\sw\del_1$ then $\beta\notin \Fb(f_2)+\Z\del_2$.
\item\label{item:southwest2} If $\slope(\beta)\geq \slope(\del_2)$ and $\beta\sw\del_2$ then $\alpha\notin \Fb(f_1)+\Z\del_1$.
\end{theoremlist}
\end{lemma}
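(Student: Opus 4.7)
The plan is to prove part~\itemref{item:southwest1} directly and to deduce part~\itemref{item:southwest2} by a dual argument via the \rotn-rotation of \cref{prop:rotn}, which exchanges $\del_1$ with $\del_2$ and the roles of $\alpha$ and $\beta$ while flipping ``above'' and ``below,'' translating the hypotheses of~\itemref{item:southwest2} for $f$ into those of~\itemref{item:southwest1} for the rotated permutation.

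For~\itemref{item:southwest1}, I would argue by contradiction: assume $\beta \in \Fb(f_2)+\Z\del_2$. By \cref{lemma:corner_slope_weak}\itemref{item:corner_slope_weak1}, the hypothesis $\slope(\alpha)\le\slope(\del_1)$ gives $\alpha \in \Fb(f_1)+\Z\del_1$. Combined with $\alpha \sw \del_1$, $\alpha \ne \del_1$, and the fact that $\alpha$ has positive integer coordinates (as $\alpha \in \F(f)$), a direct fundamental-domain check forces $\alpha \in \F(f_1)$. The central symmetry of $\F(f_1)$ (\cref{cor:cs} applied to $f_1$) then yields $\epsilon := \del_1-\alpha \in \F(f_1)$. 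Since $\beta = \del_2+\epsilon$ and $\del_2 \in \Z\del_2$, the assumption becomes $\epsilon \in \Fb(f_2)+\Z\del_2$. Thus $\epsilon$ arises simultaneously as an inversion of $f_1$ and, after separately handling the degenerate case $\epsilon \in \Z\del_2$ (ruled out because $\Pinf+2\del_2$ lies strictly above $\Pinf$, cf.\ \cref{rmk:semigroup_strong}), as a class represented by some $\epsilon_2 \in \F(f_2)$ with $\epsilon_2 \equiv \epsilon \pmod{\del_2}$.

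The contradiction is then produced by exhibiting a crossing forbidden by \cref{prop:cross_below}. Using the equivalences $\Pinf^{(f_1)} \sim (P[1\ton])_\infty$ and $\Pinf^{(f_2)} \sim (P[0\to1])_\infty$ established in \cref{sec:big-paths} while proving that $f_1,f_2$ are \repfree, the inversion $\epsilon$ of $f_1$ provides a shift $\epsilon_1 \equiv \epsilon \pmod{\del_1}$ with $P[1\ton]$ crossing $P[1\ton]+\epsilon_1$, while $\epsilon_2$ furnishes a crossing of $P[0\to1]$ with $P[0\to1]+\epsilon_2$. Applying \cref{lemma:inf_slopes} to the data $(P[a\tob],\alpha')=(P[1\ton],\epsilon_1)$ and $(P[c\tod],\beta')=(P[0\to1],\epsilon_2)$---after arranging representatives so that $\slope(\epsilon_1)>\slope(\epsilon_2)$---yields integers $s,t \in \Z$ for which $P[1\ton]+s\epsilon_1$ crosses $P[0\to1]+t\epsilon_2$ from below. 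Reading this from the perspective of the high-slope segment, the shift $P[0\to1]+t\epsilon_2$ crosses the shift $P[1\ton]+s\epsilon_1$ from above, contradicting \cref{prop:cross_below}.

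The main obstacle is ensuring the slope comparison has the correct orientation. The hypothesis $\slope(\alpha) \le \slope(\del_1)$ forces $\slope(\epsilon) \ge \slope(\del_1)$, but pinning down the sign of $\slope(\epsilon_1)-\slope(\epsilon_2)$ requires case analysis based on where $\slope(\epsilon)$ sits relative to $\slope(\del_2)$, together with a careful choice of representatives within the $\del_1$- and $\del_2$-cosets. The boundary case $\slope(\alpha)=\slope(\del_1)$, where $\alpha$ lies on the lattice ray through $\del_1$, will need separate treatment using \cref{rmk:semigroup_strong} combined with the identity $\alpha+\epsilon=\del_1$.
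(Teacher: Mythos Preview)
Your strategy diverges from the paper's: you try to reduce everything to \cref{lemma:inf_slopes} and \cref{prop:cross_below} via the auxiliary vector $\epsilon=\del_1-\alpha$, while the paper argues directly with $Q:=P+\alpha$ and carries out a four-case geometric analysis according to the position of $Q[1]$ relative to the ``layered'' paths $\PX{t}:=\Pinf+t\del_2$ and to $L:=(P[n\to 1+n])_\infty$. The paper never invokes \cref{lemma:inf_slopes} here.

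Your argument has a real gap at exactly the point you flag: the slope comparison $\slope(\epsilon_1)>\slope(\epsilon_2)$. You correctly deduce $\slope(\epsilon)\ge\slope(\del_1)$ from $\slope(\alpha)\le\slope(\del_1)$, but this says nothing about how $\slope(\epsilon_1)$ compares to $\slope(\epsilon_2)$ once you pass to the specific coset representatives for which the actual crossings of $P[1\ton]$ and $P[0\to1]$ occur. You are not free to choose these representatives: \cref{lemma:inf_slopes} requires the particular shifts that give crossings, and since $f_1,f_2$ are \repfree there are at most two such shifts in each coset. The ``case analysis based on where $\slope(\epsilon)$ sits relative to $\slope(\del_2)$'' that you allude to is the entire content of the lemma; note that the later \cref{lemma:slopes_12} (slope separation between $\F(f_1)$ and $\F(f_2)$) is \emph{proved using} \cref{lemma:southwest}, so you cannot lean on it. Your treatment of the degenerate case $\epsilon\in\Z\del_2$ is also incomplete: the observation that $\Pinf+2\del_2$ lies strictly above $\Pinf$ does not by itself preclude $(P[1\ton])_\infty$ from meeting $(P[1\ton])_\infty+m\del_2$, since $(P[1\ton])_\infty$ has period $\del_1$, not $\del$.

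In short, the reduction to $\epsilon$ is clean and the invocation of \cref{prop:cross_below} is the right endpoint, but the middle of the argument---producing compatible crossing shifts with the correct slope inequality---is missing and is where the genuine difficulty lies. The paper's four-case analysis is what replaces this step.
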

\begin{proof}
In view of \cref{prop:rotn}, we only prove~\itemref{item:southwest1}. By \cref{cor:cs}, we have $\beta\in\F(f)$. Let $Q:=P+\alpha$. By \cref{lemma:corner_slope_weak}, $\alpha\in \Fb(f_1)+\Z\del_1$ and $\beta\in \Fb(f_2)+\Z\del_2$, or equivalently, $Q[1\ton]$ intersects $(P[1\ton])_\infty$ twice and $Q[0\to1]$ intersects $L:=(P[n\to1+n])_\infty$ twice. Moreover, since $(0,0)\sw\alpha\sw\del_1$, we see that 
\begin{equation}\label{eq:wsw}
  P[0]\sw Q[0]\sw Q[1]\sw P[n].%
\end{equation}

\begin{figure}
\includegraphics[width=0.35\textwidth]{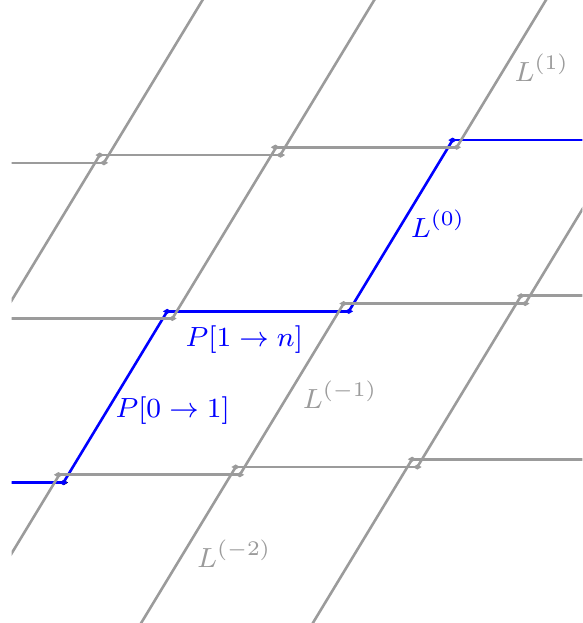}
  \caption{\label{fig:net} The proof of \cref{lemma:southwest}.}
\end{figure}

For $t\in\Z$, we let $\PX t:=\Pinf+t\del_2=\Pinf-t\del_1$ and $\LX t:=L\cap \PX t$; see \cref{fig:net}. Thus there exists a unique integer $t\in\Z$ such that $Q[1]$ is above $\PX t$ and below $\PX{t+1}$. We know that $Q[0\to1]$ intersects $L$, and by~\eqref{eq:wsw}, it can only intersect $\LX{<0}:=\bigcup_{s<0} \LX s$. Similarly, we observe that $Q[1\ton]$ must intersect $P[1\ton]\cup R[1\ton]$ exactly twice, where $R:=P+\del_1$ is a subpath of $\PX{-1}$. We consider four cases.

\cas1 $t\geq0$ and $Q[0]$ is above $\Pinf$. Since $\LX{<0}$ is below $\Pinf$ and $Q[0\to1]$ intersects it twice, we see that $Q[0\to1]$ intersects $\Pinf$ twice. Then $Q[1\ton]$ cannot intersect $\Pinf$, so it has to stay above $\Pinf$. Thus $Q[1\ton]$ must intersect $R[1\ton]$ twice, which is impossible since $R[1\ton]$ is below $\Pinf$. 

\cas2 $t\geq0$ and $Q[0]$ is below $\Pinf$. Thus $Q[0\to1]$ intersects $\Pinf$ once, and therefore so does $Q[1\ton]$. Thus $Q[1\ton]$ must also intersect $R[1\ton]$. Since $Q[1]$ is above $\Pinf$, $Q[n]\approx Q[1]+\del_1$ is above $\PX{-1}=\Pinf+\del_1$. Since $Q[1]$ is also above $\PX{-1}$, we see that $Q[1\ton]$ intersects $\PX{-1}$ twice. Since $Q[n]$ is above $\PX{-1}$, so is $Q[0]$. In order for $Q[0\to1]$ to intersect $\LX{<0}$, it must intersect $\PX{-1}$, since each point of $\LX{<0}$ is either on or below $\PX{-1}$. Thus $Q$ intersects $\PX{-1}$ at least three times, a contradiction.

\cas3 $t<0$ and $Q[1]$ is above $L$. Then $Q[0]$ is above $L$. Recall that $Q[1]$ is above $\PX t$ and below $\PX{t+1}$, thus $Q[0]$ and $Q[n]$ are above $\PX{t-1}$ and below $\PX t$. We see that each of $Q[0\to1]$ and $Q[1\ton]$ intersects $\PX t$ exactly once. Let $q$ be the first intersection point of $Q[0\to1]$ with $\PX t\cup\PX{t-1}$. We claim that $q$ belongs to $\PX{t-1}$. Indeed, suppose otherwise that $q\in \PX t$. Since $Q[0\to1]$ has to intersect $L$ but it can no longer intersect $\PX{t}$, the first intersection point $\ell$ of $Q[0\to1]$ with $L$ has to belong to $\LX s$ for some $s>t$. In order for this to happen, $Q[0\to1]$ must intersect $\PX s$ twice (with the second crossing at $\ell$), and therefore the remaining part of $Q[0\to 1]$ will stay below $\PX s$, and thus below $L$. We see that $Q[1]$ is below $L$, contradicting our assumption. Thus $q\in \PX{t-1}$. Specifically, we have $q\in (P[1\to1+n]+(t-1)\del_2)$, which is the lower boundary of the region bounded by $\PX{t-1}\cup\PX t$ containing $Q[0]$.

Consider $Q':=Q+\del_2$. The first intersection point $q'=q+\del_2$ of $Q'$ with $\PX t\cup\PX{t+1}$ belongs to $\PX{t}$. Since $Q[1\ton]$ stays below $Q'$ and intersects $\PX t$, we see that $Q[1\ton]$ stays below $\PX{t+1}$, and the unique intersection point of $Q[1\ton]$ with $\PX t$ belongs to $P[1\to1+n]+t\del_2$. Since $t<0$, $P[1\ton]$ stays above $\PX t$, and thus $Q[1\ton]$ must intersect $R[1\ton]$. If $t<-1$ then $R[1\ton]$ is above $\PX t$ and we get a contradiction. If $t=-1$ then we have already shown that the only intersection point of $Q[1\ton]$ with $\PX{-1}$ belongs to $P[1\to1+n]+t\del_2$ which is disjoint from $R[1\ton]$. 

\cas4 $t<0$ and $Q[1]$ is below $L$. Then $Q[0]$ is below $L$. It is still true that each of $Q[0\to1]$ and $Q[1\ton]$ intersects $\PX t$ exactly once. Thus the second point of $Q[0\to1]\cap L$ belongs to $\LX s$ for some $s>t$. Thus $Q[0\to1]$ intersects $\PX s$ twice, so $Q[1\ton]$ stays below $\PX s$. Since $Q[0\to1]$ can only intersect $\LX{<0}$, we find that $s<0$. Thus $Q[1\ton]$ cannot intersect $P[1\ton]\cup R[1\ton]$, a contradiction.
\end{proof}

Using \cref{lemma:southwest}, the result of \cref{lemma:corner_slope_weak} can be strengthened as follows.
\begin{corollary}\label{cor:corner_slope_stong}
Let $\alpha\in \F(f)$. 
\begin{theoremlist}
\item\label{item:corner_slope_strong1} If $\slope(\alpha)\leq \slope(\del_1)$ then $\alpha\in \F(f_1)\sqcup\{\del_1\}$.
\item\label{item:corner_slope_strong2} If $\slope(\alpha)\geq \slope(\del_2)$ then $\alpha\in \F(f_2)\sqcup\{\del_2\}$.
\end{theoremlist}
\end{corollary}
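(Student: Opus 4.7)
The plan is to sharpen \cref{lemma:corner_slope_weak}\itemref{item:corner_slope_weak1} by pinning down the ambiguity in the $\Z\del_1$-coset. By \cref{prop:rotn} combined with \cref{cor:cs}, it suffices to prove \itemref{item:corner_slope_strong1}. I therefore fix $\alpha\in\F(f)$ with $\slope(\alpha)\leq\slope(\del_1)$ and use \cref{lemma:corner_slope_weak}\itemref{item:corner_slope_weak1} to write $\alpha=\gamma+t\del_1$ with $\gamma\in\Fb(f_1)=\F(f_1)\sqcup\{(0,0),\del_1\}$ and $t\in\Z$. The rest of the argument rules out every possibility except $\alpha=\del_1$ or ($t=0$ and $\gamma\in\F(f_1)$).

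When $\gamma\in\{(0,0),\del_1\}$, $\alpha$ becomes $s\del_1$ for some $s\geq 1$. The case $s=1$ is the allowed exception. For $s\geq 2$ I would iterate \cref{rmk:semigroup_strong}: the hypothesis $\del_1\notin(\Fbf\setminus\{\del_1\})+\Z\del$ is automatic because $\del_1\in\Fbf$ and any nonzero $\Z\del$-translate of $\del_1$ leaves the rectangle $[0,k]\times[0,n]$, so the base case $\alpha=\beta=\del_1$ forces $2\del_1\notin\Fbf+\Z\del$, and strong induction on $s$ (applying the remark with $(s-1)\del_1$ and $\del_1$) extends this to all $s\geq 2$, contradicting $\alpha\in\F(f)\subseteq\Fbf$.

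The substantive case is $\gamma\in\F(f_1)\subseteq[k_1-1]\times[n_1-1]$. Here $t\leq-1$ is immediately incompatible with $\alpha$ having positive coordinates and $t=0$ gives the desired conclusion, so I plan to rule out $t\geq 1$ by invoking \cref{lemma:southwest}\itemref{item:southwest2} for $\alpha$ and $\beta:=\del-\alpha$. Central symmetry (\cref{cor:cs}) places $\beta\in\F(f)$, and $t\geq 1$ forces $\k(\alpha)\geq k_1+1$ and $\n(\alpha)\geq n_1+1$, so $\beta\sw\del_2$ strictly. The slope bound $\slope(\beta)\geq\slope(\del_2)$ reduces by clearing denominators to
\begin{equation*}
(1-t)(n_2 k_1-k_2 n_1)+k_2\n(\gamma)-n_2\k(\gamma)\geq 0.
\end{equation*}
The first summand is non-negative because $n_2 k_1-k_2 n_1<0$ (from $\slope(\del_1)<\slope(\del_2)$, \cref{lemma:F_properties_easy}\itemref{item:sl_1<sl<sl_2}) and $1-t\leq 0$; the second is strictly positive because the hypothesis $\slope(\alpha)\leq\slope(\del_1)$ is, after clearing denominators, equivalent to $\slope(\gamma)\leq\slope(\del_1)$, and hence $\slope(\gamma)<\slope(\del_2)$ strictly. \cref{lemma:southwest}\itemref{item:southwest2} then yields $\alpha\notin\Fb(f_1)+\Z\del_1$, contradicting the starting decomposition.

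I expect the main obstacle to be precisely this slope bookkeeping: one has to see that the strict separation $\slope(\del_1)<\slope(\del_2)$, the hypothesis on $\slope(\alpha)$, and the sign $1-t\leq 0$ all cooperate to make both summands above of the correct sign. Once that algebra is in place, \cref{lemma:southwest}\itemref{item:southwest2} does the geometric heavy lifting.
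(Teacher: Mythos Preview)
Your proof is correct and follows the same route as the paper: write $\alpha\in\Fb(f_1)+m\del_1$, rule out $m\leq -1$ by positivity of coordinates, and for $m\geq 1$ derive $\beta\sw\del_2$ together with $\slp(\beta)\geq\slp(\del_2)$, then invoke \cref{lemma:southwest}\itemref{item:southwest2} for the contradiction. Your explicit algebraic verification of the slope inequality fills in a step the paper simply asserts, and your separate treatment of the case $\alpha=s\del_1$ via \cref{rmk:semigroup_strong} is correct but unnecessary: the same slope computation with $\gamma=(0,0)$ already gives $(1-t)(n_2k_1-k_2n_1)>0$ for $t\geq 2$, so \cref{lemma:southwest}\itemref{item:southwest2} handles that case uniformly as well.
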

\begin{proof}
Again, by \cref{prop:rotn}, it suffices to prove~\itemref{item:corner_slope_strong1}. 
 By \cref{item:corner_slope_weak1}, we have $\alpha\in  \Fb(f_1)+\Z\del_1$, and recall that $\alpha\in[k-1]\times[n-1]$ since $\alpha\in \F(f)$. Let $m\in\Z$ be the unique integer satisfying $\alpha\in \Fb(f_1)+m\del_1$. Clearly, $m\geq0$. Our goal is to show that $m=0$. Assume for the sake of contradiction that $m>0$. Thus $\del_1\sw \alpha$. Let $\beta:=\del-\alpha$, then $\beta\sw\del_2$ and $\slp(\beta)\geq\slp(\del_2)$. We get a contradiction by \cref{item:southwest2}.
\end{proof}

\begin{lemma}\label{lemma:del_1-alpha}
Let $\alpha\in \F(f)\setminus \{\del_1\}$ be such that $\slope(\alpha)\leq \slope(\del_1)$. Then $\del_1-\alpha\in \F(f)$.
\end{lemma}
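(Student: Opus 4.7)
The plan is to reduce the claim to central symmetry in $\F(f_1)$ and then lift the resulting statement back to $\F(f)$.

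First, since $\alpha \in \F(f)\setminus\{\del_1\}$ has $\slope(\alpha) \leq \slope(\del_1)$, Corollary~\ref{cor:corner_slope_stong}\myarabic{theoremlisti}{} (item~(i)) places $\alpha$ in $\F(f_1)$. Because $f_1$ is a repetition-free bounded affine permutation in $\Bxc_{k_1,n_1}$, Corollary~\ref{cor:cs} applied to $f_1$ yields the central symmetry conclusion $\del_1 - \alpha \in \F(f_1)$.

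Next, I would transfer this statement back to $\F(f)$ through the geometric equivalence between $\Pinf^{(f_1)}$ and $(P[1\ton])_\infty$. The membership $\del_1 - \alpha \in \F(f_1)$ produces exactly two intersection points (modulo $\Z\del_1$) of $(P[1\ton])_\infty$ with its shift by $\del_1 - \alpha$. Each such intersection lifts to a pair $(y, z) \in P[1\ton]^2$ with $y - z = (1+r)\del_1 - \alpha$ for some $r \in \{0, -1\}$, obtained by normalizing one endpoint into $P[1\ton]$. In the case $r = 0$ one has $y, z \in P[1\ton] \subseteq \Pinf$ with $y - z = \del_1 - \alpha \in [k-1]\times[n-1]$, producing a self-intersection of $\Pinf$ at shift $\del_1 - \alpha$, hence $\del_1 - \alpha \in \F(f)$. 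In the case $r = -1$ one has $z - y = \alpha$, which only re-verifies the already-known fact $\alpha \in \F(f)$.

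The main obstacle is guaranteeing that at least one of the two lifts occurs at $r = 0$. The pairs contributing at $r = -1$ are exactly those in $P[1\ton]$ with difference $\alpha$; by the analysis in Corollary~\ref{cor:corner_slope_stong} (item~(i)) together with the multiplicity-one hypothesis $\alpha \in \F(f)$, these pairs account for a bounded total count. Comparing against the $r = 0$ contribution and using a careful inclusion-exclusion around the identification $P[n] \approx P[1] + \del_1$ forces the existence of at least one $r = 0$ lift, completing the proof. This last combinatorial step---reconciling the two distinct lift counts with the repetition-free hypothesis on both $f$ and $f_1$---is the delicate point in the argument.
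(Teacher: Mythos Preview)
Your approach has a genuine gap at the final step. You correctly observe that $\alpha\in\F(f_1)$ (via \cref{cor:corner_slope_stong}) and that central symmetry gives $\del_1-\alpha\in\F(f_1)$. You also correctly identify that the two intersection points of $(P[1\ton])_\infty$ with its shift by $\del_1-\alpha$ lift to shifts of $P[1\ton]$ by either $\del_1-\alpha$ (your $r=0$) or by $-\alpha$ (your $r=-1$). The problem is that \emph{both} intersections can occur at $r=-1$: since $(P[1\ton])_\infty$ is $\del_1$-periodic, the intersection set at shift $\del_1-\alpha$ is literally the same as at shift $\alpha$, so knowing $\del_1-\alpha\in\F(f_1)$ gives you no new information beyond $\alpha\in\F(f_1)$. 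Concretely, it is entirely consistent with everything you have written that $P[1\ton]$ meets $P[1\ton]+\alpha$ twice and $P[1\ton]+\del_1-\alpha$ not at all. No ``inclusion-exclusion around the identification $P[n]\approx P[1]+\del_1$'' rules this out: the counts on both sides are $2$ and $2$, and they can coincide.

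This is exactly the scenario the paper's proof has to exclude, and it does so not by counting but by geometry. Assuming $x:=\del_1-\alpha\notin\F(f)$, one deduces that $Q[1\ton]$ (with $Q=P+\alpha$) intersects $P[1\ton]$ twice. One then splits on whether $Q[1]$ lies above or below $\Pinf$: the ``above'' case is dispatched by \cref{lemma:impossible}, and the ``below'' case is handled by shifting $Q$ by $\del_2$ and observing that $Q+\del_2$ must then cross $P[1\ton]$, forcing $x\in\F(f)$ after all. Both halves rely on the structural results \cref{prop:cross_below} and \cref{lemma:impossible}, not on a counting argument. Your proposal would need to supply an argument of this kind to close the gap.
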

\begin{proof}
Let $x:=\del_1-\alpha$, and assume $x\notin \F(f)$. By \cref{item:corner_slope_strong1}, we have $\alpha\in \F(f_1)$, so there exists $s\in\Z$ such that for $Q:=P+\alpha-s\del_1$, we have that $Q[1\ton]$ crosses $P[1\ton]$. Since $x\notin \F(f)$, we cannot have $s=1$. If $s\notin \{0,1\}$ then $Q[1\ton]$ and $P[1\ton]$ cannot intersect at all because $\alpha\in \F(f_1)$ implies $0\sw\alpha\sw\del_1$.

Thus $s=0$ and $Q=P+\alpha$. So $Q[1\ton]$ intersects $P[1\ton]$, and also $Q[1\ton]$ does not intersect $P[1\ton]+\del_1$ (because $x\notin \F(f)$). Thus $Q[1\ton]$ intersects $P[1\ton]$ twice. Assume first that $Q[1]$ is below $P$. Let $Q':=Q+\del_2$.  Thus $Q'[0]\approx Q[1]$ and $Q'$ is above $Q$. Therefore, $Q'$ must intersect $P[1\ton]$. Since $P=Q'+x-\del$, we see that $x\in \F(f)$, a contradiction. If $Q[1]$ is above $P$ then we get a contradiction by \cref{lemma:impossible}.
\end{proof}

Let $\Del_1$ be the convex hull of $\{0,\del_1,\del\}$ and $\Del_2$ be the convex hull of $\{0,\del_2,\del\}$. Denote $\PZ:=(\Del_1\cup \Del_2)\cap \Z^2$.

\begin{lemma}\label{lemma:triangle}
We have $\PZ\subset \Fb(f)$.
\end{lemma}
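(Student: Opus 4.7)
The plan is to establish $\Del_1 \cap \Z^2 \subset \Fb(f)$ directly; the analogous containment $\Del_2 \cap \Z^2 \subset \Fb(f)$ then follows by applying the \rotn-rotation of \cref{prop:rotn}, which interchanges the roles of $\del_1$ and $\del_2$. The vertices and the diagonal edge are the easy part: $0, \del \in \Fb(f)$ by definition; $\del_1 \in \F(f)$ because the inversion $(0,1)$ of $f$ (present by the double crossing at $0$) contributes $\kn(f^{(0,1)}_1) = \del_1$; then $\del_2 = \del - \del_1 \in \F(f)$ by central symmetry (\cref{cor:cs}); and any lattice point on the edge $0$-$\del$ lies in $\Fmin \subseteq \F(f)$ by \cref{slope:=}.

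For a remaining lattice point $\alpha \in \Del_1$, the goal is, by \cref{prop:F_paths}, to exhibit a crossing of $\Pinf$ with $\Qinf := \Pinf + \alpha$. Writing $\alpha = u\del_1 + v\del_2$ with $0 \leq v < u \leq 1$, the point lies strictly below the line $0$-$\del$ and weakly above the line $0$-$\del_1$, and (unless it is on the edge $\del_1$-$\del$) satisfies $\alpha \wsw \del_1$. Using the identifications $P[1] \approx \del_2$ and $P[n] \approx \del$ from \cref{notn:approx}, one computes $Q[0] = \alpha$ and $Q[1] \approx u\del_1 + (1+v)\del_2$, so that $Q[0]$ and $Q[1]$ sit on opposite sides of an appropriate shift $P[1 \to n] + s\del_1$ of the low-slope portion of $\Pinf$. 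An intermediate-value argument then forces the high-slope arc $Q[0 \to 1]$ or the low-slope arc $Q[1 \to n]$ to cross $\Pinf$, producing the required intersection. The case analysis mirrors the style of \cref{lemma:impossible} and \cref{lemma:southwest}: one splits on the position of $Q[0]$ and $Q[1]$ relative to $\Pinf$, and rules out impossible ``non-crossing'' configurations via \cref{prop:cross_below} together with \cref{lemma:inf_slopes} applied to the pair of slopes $(\slp(\alpha), \slp(\del_1))$ or $(\slp(\alpha), \slp(\del_2))$.

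The principal obstacle will be the interior case, where $\alpha$ lies strictly inside $\Del_1$ and cannot be read off directly from either $\F(f_1)$ or $\F(f_2)$ via \cref{cor:corner_slope_stong}. To handle this I would first establish membership of the boundary lattice points—those on the segments $0$-$\del_1$, $\del_1$-$\del$, and $0$-$\del$—by the argument above, and then propagate membership into the interior using Lemma~\directref{lemma:F_properties_easy}{item:order_ideal}: starting from a carefully chosen seed $\beta \in \F(f)$ on the boundary with $\<\delp,\beta\> < 0$, any interior $\alpha$ with $\alpha \wsw \beta$, $\k(\alpha) \geq \k(\beta)$, and $\<\delp,\alpha\> \leq 0$ is automatically in $\F(f)$. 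Showing that every interior lattice point of $\Del_1$ admits such a seed is the subtle combinatorial step; I expect it to follow by combining \cref{lemma:del_1-alpha} (which provides a pairing $\beta \leftrightarrow \del_1 - \beta$ among $\F(f)$-points of slope at most $\slp(\del_1)$) with central symmetry, giving enough boundary coverage of $\Del_1$ to reach every interior point by a northwest translation.
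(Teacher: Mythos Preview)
Your proposal is not a proof; it is two sketched approaches, neither of which is carried through. The direct crossing argument of your second paragraph (``an intermediate-value argument then forces\ldots'') is far too vague, and the boundary-then-interior scheme of your third paragraph is missing the key step. You also misstate the order-ideal condition: you want the interior point \emph{northwest} of the seed, not $\wsw$ it (your conditions $\alpha\wsw\beta$ and $\k(\alpha)\geq\k(\beta)$ together force $\k(\alpha)=\k(\beta)$).

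The ingredient you are missing is the semigroup property, Lemma~\directref{lemma:F_properties_easy}{item:semigroup}. The paper's argument is short: take $x\in\Del_1\cap\Z^2$ with $x\notin\Fb(f)$. Since $\del_1\in\F(f)$ and $\<\delp,\del_1\><0$, part~\itemref{item:order_ideal} rules out $x$ northwest of $\del_1$; so either $x\sw\del_1$ or $\del_1\sw x$. In the first case, set $\alpha:=\del_1-x$. Because $x\in\Del_1$ one has $\slp(\alpha)\leq\slp(\del_1)$ and $\<\delp,\alpha\><0$; also $\<\delp,x\><0$. If $\alpha\notin\F(f)$ then by~\itemref{item:semigroup} we get $\del_1=x+\alpha\notin\F(f)$, a contradiction. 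So $\alpha\in\F(f)$, and now \cref{lemma:del_1-alpha} gives $x=\del_1-\alpha\in\F(f)$, contradiction. The case $\del_1\sw x$ follows by applying the dual argument to $y:=\del-x$ and using central symmetry. Finally $\Del_2$ follows from $\Del_1$ by \cref{cor:cs}.

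So your instinct that \cref{lemma:del_1-alpha} is relevant is correct, but the decisive move is to use the semigroup property to first force $\del_1-x\in\F(f)$; then \cref{lemma:del_1-alpha} finishes immediately. No direct crossing analysis or separate boundary/interior treatment is needed.
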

\begin{proof}
Let $x\in\Z^2\cap \Del_1$, and suppose that $x\notin \Fb(f)$. First, $x$ cannot be northwest of $\del_1$ by Lemma~\directref{lemma:F_properties_easy}{item:order_ideal}.  Thus either $x\sw\del_1$ or $\del_1\sw x$.

Assume first that $x\sw \del_1$. Then for $\alpha:=\del_1-x$, we have $\slope(\alpha)\leq \slope(\del_1)$ and $\<\delp,\alpha\><0$. If $\alpha\notin\F(f)$ then by Lemma~\directref{lemma:F_properties_easy}{item:semigroup}, we get $\del_1\notin\F(f)$, a contradiction. Thus $\alpha\in \F(f)$, in which case we are done by \cref{lemma:del_1-alpha}.

Assume now that $\del_1\sw x$. Applying the dual argument (cf. \cref{prop:rotn}) to $y:=\del-x$, we find $y\in\F(f)$, and thus $x\in\F(f)$ by \cref{cor:cs}. Thus $\Fb(f)$ contains all lattice points of $\Del_1$, and by \cref{cor:cs} again, it contains all lattice points of $\Del_2$ as well.
\end{proof}

\begin{proposition}\label{prop:F=G}
We have $\F(f_1)=G_1$ and $\F(f_2)=G_2$, where
\begin{align*}%
  G_1'&:=\{\alpha\mid \alpha\in \F(f)\setminus\{\del_1\}\text{ is such that $\slope(\alpha)\leq\slope(\del_1)$}\},\\
  G_2'&:=\{\beta\mid \beta\in \F(f)\setminus\{\del_2\}\text{ is such that $\slope(\beta)\geq\slope(\del_2)$}\},\\
G_1&:=G_1'\cup (\del_1-G_1'), \quad\text{and}\quad G_2:=G_2'\cup (\del_2-G_2').
\end{align*}
\end{proposition}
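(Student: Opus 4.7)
The plan is to prove $\F(f_1) = G_1$; the statement $\F(f_2) = G_2$ will then follow by applying the \rotn-rotation of \cref{prop:rotn}. For the easy inclusion $G_1 \subseteq \F(f_1)$: given $\alpha \in G_1'$, \cref{cor:corner_slope_stong}\itemref{item:corner_slope_strong1} yields $\alpha \in \F(f_1) \sqcup \{\del_1\}$, and since $\alpha \neq \del_1$ by the definition of $G_1'$, we get $\alpha \in \F(f_1)$. Central symmetry of $\F(f_1)$ (\cref{cor:cs}) then gives $\del_1 - \alpha \in \F(f_1)$, so $G_1 \subseteq \F(f_1)$.

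For the reverse inclusion $\F(f_1) \subseteq G_1$, I would take $\alpha \in \F(f_1)$ and assume for contradiction that $\alpha \notin G_1$. Using central symmetry of $\F(f_1)$ to replace $\alpha$ by $\del_1 - \alpha$ if needed, I assume $\slope(\alpha) \leq \slope(\del_1)$; combined with $\alpha \in [k_1-1] \times [n_1-1]$ this forces $\alpha \neq \del_1$, and the assumption $\alpha \notin G_1$ then gives $\alpha \notin \F(f)$. By \cref{lemma:F_properties_easy}\itemref{item:sl_1<sl<sl_2} and \cref{lemma:slope}\itemref{slope:<}, the shifted path $P_\infty + \alpha$ therefore lies strictly below $P_\infty$, so in particular $P[1\to n] \cap (P[1\to n] + \alpha + m\del) = \emptyset$ for every $m \in \Z$.

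Next, using that $f_1$ is \repfree, the relation $\alpha \in \F(f_1)$ produces exactly two intersection points of $(P[1\to n])_\infty$ with $(P[1\to n])_\infty + \alpha$ modulo $\Z\del_1$, which decompose as $\sum_u |P[1\to n] \cap (P[1\to n] + u\del_1 + \alpha)| = 2$. A direct horizontal-range calculation restricts the contributing values to $u \in \{0,-1\}$, and $u = 0$ is ruled out by the previous paragraph. Hence both intersections occur at $u = -1$; using the congruence $\alpha - \del_1 \equiv \alpha + \del_2 \pmod{\Z\del}$ and setting $\alpha' := \alpha + \del_2 \in [k-1] \times [n-1]$, this translates into $P \cap (P + \alpha' - \del) \neq \emptyset$, so $\alpha' \in \F(f)$, and central symmetry of $\F(f)$ then yields $\del_1 - \alpha = \del - \alpha' \in \F(f)$.

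It remains to derive the contradiction, and this splits into two cases. In the diagonal case $\slope(\alpha) = \slope(\del_1)$, we have $\slope(\del_1 - \alpha) = \slope(\del_1)$ and $\del_1 - \alpha \in \F(f) \setminus \{\del_1\}$, so $\del_1 - \alpha \in G_1'$, and thus $\alpha \in \del_1 - G_1' \subseteq G_1$, directly contradicting the assumption $\alpha \notin G_1$. The generic case $\slope(\alpha) < \slope(\del_1)$ is the main obstacle I anticipate: here $\slope(\del_1 - \alpha) > \slope(\del_1)$ strictly, which is consistent with $\del_1 - \alpha \notin G_1'$, so the contradiction does not follow from the $G_1'$-membership of $\del_1 - \alpha$ alone. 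I expect it to arise from a more intricate geometric analysis of the specific positioning of the two $u=-1$ crossings (which lie entirely in $P[1\to n]$ by the second-to-last paragraph) relative to the translates of the steep segments $P[0\to 1]$, leveraging \cref{prop:cross_below}, \cref{lemma:impossible}, and a sharpening of \cref{lemma:southwest} to force an additional crossing in $P_\infty$ that violates the \repfree hypothesis on $f$. This last step is likely the longest and most technical portion of the full proof.
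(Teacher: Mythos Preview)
Your approach is the same as the paper's, and your argument is correct up to the last paragraph; the gap you anticipate does not exist. Once you have shown that $Q[1\to n]$ meets $P'[1\to n]$ twice (your ``$u=-1$'' conclusion, with $Q:=P+\alpha$ and $P':=P+\del_1$) and that $Q_\infty=P_\infty+\alpha$ lies below $P_\infty$, observe that $P'[1]\approx P[n]$ is a point of $P_\infty$ and hence above $Q_\infty$. Translating everything by $-\alpha$, this says that for $\alpha':=\del_1-\alpha\in[k-1]\times[n-1]$ the shift $(P+\alpha')[1\to n]$ meets $P[1\to n]$ twice while $(P+\alpha')[1]$ is above $P_\infty$. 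That is exactly the configuration \cref{lemma:impossible} forbids, so the contradiction is immediate --- no further geometric analysis, and no use of \cref{prop:cross_below} or \cref{lemma:southwest}, is required. This is precisely how the paper concludes (in one line).

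Two minor remarks. First, your separate treatment of the case $\slope(\alpha)=\slope(\del_1)$ is correct but unnecessary: the path argument you give already covers it, since $\slope(\del_1)<\slope(\del)$ guarantees $\slope(\alpha)<\slope(\del)$ and hence $Q_\infty$ below $P_\infty$ whenever $\alpha\notin\F(f)$. Second, the detour deriving $\del_1-\alpha\in\F(f)$ via central symmetry is also not needed for the contradiction, though it does no harm.
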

\noindent See \cref{fig:recurrence} for an example.
\begin{proof}
By \cref{prop:rotn}, it suffices to prove $\F(f_1)=G_1$. By \cref{cor:cs}, $\F(f_1)$ is symmetric with respect to the map $\alpha\mapsto \del_1-\alpha$, so by \cref{cor:corner_slope_stong}, $G_1\subset \F(f_1)$. Conversely, suppose that we have found $\alpha\in \F(f_1)\setminus G_1$. Since both sets are symmetric with respect to the map $\alpha\mapsto \del_1-\alpha$, we may assume that $\slope(\alpha)\leq \slope(\del_1)$. By the definition of $G_1$, we have $\alpha\notin \F(f)$, so $\slope(\alpha)< \slope(\del_1)$. By \cref{slope:<}, $Q:=P+\alpha$ is below $\Pinf$. 

Since $\alpha\in \F(f_1)$, $Q[1\ton]$ intersects $(P[1\ton])_\infty$ twice, and both intersections must belong to $P[1\ton]\cup P'[1\ton]$, where $P':=P+\del_1$. Since $\alpha\notin \F(f)$, we see that $Q[1\ton]$ cannot intersect $P[1\ton]$, so it intersects $P'[1\ton]$ twice. Observe that $P'[1]\approx P[n]$ is above $\Qinf$. We get a contradiction by \cref{lemma:impossible} (applied to $\alpha':=\del_1-\alpha$, $Q$, and $P'$).
\end{proof}

\begin{corollary}\label{cor:supset_12}
We have
\begin{equation}\label{eq:F=big-cup}
\Fb(f)= \PZ\cup  \F(f_1)\cup \F(f_2)\cup(\F(f_1)+\del_2)\cup(\F(f_2)+\del_1).
\end{equation}
\end{corollary}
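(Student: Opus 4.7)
The plan is to prove the two inclusions of~\eqref{eq:F=big-cup} separately, reducing each to results already established in \cref{sec:big-paths} and \cref{sec:convex}. The direction $\supseteq$ will follow almost formally from \cref{lemma:triangle}, \cref{prop:F=G}, and the central symmetries of $\F(f)$, $\F(f_1)$, and $\F(f_2)$. The more substantive direction is $\subseteq$, which I plan to handle by a case analysis on the slope $\slope(\alpha)$.

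For the direction $\supseteq$, each piece of the union is immediate. The inclusion $\PZ\subset\Fb(f)$ is precisely \cref{lemma:triangle}. By \cref{prop:F=G} we have $\F(f_1)=G_1'\cup(\del_1-G_1')$; the first piece is contained in $\F(f)$ by definition, while \cref{lemma:del_1-alpha} places the second piece in $\F(f)$ as well, so $\F(f_1)\subset\F(f)\subset\Fb(f)$. Applying \cref{cor:cs} to $f_1$ yields $\F(f_1)=\del_1-\F(f_1)$, and hence $\F(f_1)+\del_2=\del-\F(f_1)$; since $\Fb(f)$ is centrally symmetric about $\del/2$, the inclusion $\F(f_1)+\del_2\subset\Fb(f)$ follows. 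The arguments for $\F(f_2)$ and $\F(f_2)+\del_1$ are symmetric.

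For $\subseteq$, fix $\alpha\in\Fb(f)$; we may assume $\alpha\in\F(f)$ since $\{0,\del\}\subset\PZ$. When $\slope(\alpha)\leq\slope(\del_1)$, \cref{item:corner_slope_strong1} yields $\alpha\in\F(f_1)\cup\{\del_1\}\subset\F(f_1)\cup\PZ$; the case $\slope(\alpha)\geq\slope(\del_2)$ is handled symmetrically by \cref{item:corner_slope_strong2}. This leaves the central case $\slope(\del_1)<\slope(\alpha)<\slope(\del_2)$, which is the main technical point of the proof.

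In the central case, I write $\alpha=s_1\del_1+s_2\del_2$; using \cref{item:sl_1<sl<sl_2} one checks that for any $\gamma=t_1\del_1+t_2\del_2$ one has $\slope(\gamma)\leq\slope(\del_1)$ iff $t_2\leq 0$, and $\slope(\gamma)\geq\slope(\del_2)$ iff $t_1\leq 0$, so here $s_1,s_2>0$. The parallelogram $\Del_1\cup\Del_2$ is precisely the region $\{s_1,s_2\in[0,1]\}$, and because $\alpha\in[k-1]\times[n-1]$ it is impossible to have both $s_1>1$ and $s_2>1$. If $s_1,s_2\in(0,1]$, then $\alpha\in\PZ$. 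Otherwise, say $s_1>1$ (the case $s_2>1$ being analogous), set $\beta:=\del-\alpha\in\F(f)$. The $s_1$-coordinate of $\beta$ equals $1-s_1<0$, so $\slope(\beta)>\slope(\del_2)$ and \cref{item:corner_slope_strong2} places $\beta\in\F(f_2)\cup\{\del_2\}$; the option $\beta=\del_2$ is ruled out because it would force $\alpha=\del_1\in\PZ$, contradicting $s_1>1$. Thus $\beta\in\F(f_2)$, and \cref{cor:cs} applied to $f_2$ gives $\del_2-\beta\in\F(f_2)$; but $\del_2-\beta=\alpha-\del_1$, so $\alpha\in\F(f_2)+\del_1$. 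The main obstacle is precisely this central case, where the slope of $\alpha$ does not directly invoke \cref{cor:corner_slope_stong}, and one must reflect through $\del/2$ to land in an extreme-slope situation amenable to it.
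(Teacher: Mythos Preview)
Your proof is correct. It differs from the paper's in organization and in which lemmas carry the weight. The paper proves both inclusions at once by partitioning $[k-1]\times[n-1]$ into geometric regions: points southeast of $\del_1$ or northwest of $\del_2$ are excluded via \cref{rmk:semigroup_strong}; the parallelogram $\PZ$ is handled by \cref{lemma:triangle}; the region $\{0\sw\alpha\sw\del_1,\ \slp(\alpha)\le\slp(\del_1)\}$ is handled by the equivalence $\alpha\in\F(f)\Leftrightarrow\alpha\in\F(f_1)$ from \cref{prop:F=G}; and the remaining regions follow by central symmetry of both sides. You instead separate the two inclusions, and for $\subseteq$ you organize the case analysis by $\slope(\alpha)$ rather than by position, using \cref{cor:corner_slope_stong} as the main tool and passing to $(\del_1,\del_2)$-coordinates to reduce the middle-slope case to an extreme-slope case for $\del-\alpha$. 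Your route bypasses \cref{rmk:semigroup_strong} entirely and makes the role of the basis $\{\del_1,\del_2\}$ more transparent, at the cost of invoking \cref{lemma:del_1-alpha} separately for the $\supseteq$ direction; the paper's route is more compressed because \cref{prop:F=G} already packages both directions in each region.
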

\begin{proof}
By \cref{rmk:semigroup_strong}, $\F(f)$ contains no points which are southeast of $\del_1$ or northwest of $\del_2$. By \cref{lemma:triangle}, $\Fb(f)$ contains $\PZ$. For any $\alpha\in [k-1]\times[n-1]$ satisfying $0\sw\alpha\sw \del_1$ and $\slp(\alpha)\leq \slp(\del_1)$, we have $\alpha\in\F(f)$ if and only if $\alpha\in \F(f_1)$ by \cref{prop:F=G}. The case $0\sw \alpha\sw \del_2$ and $\slp(\alpha)\geq \slp(\del_2)$ is handled similarly. The remaining two cases follow from the observation that both sides of~\eqref{eq:F=big-cup} are centrally symmetric.
\end{proof}

\begin{lemma}\label{lemma:slopes_12}
For all $\alpha\in \F(f_1)$ and $\beta\in \F(f_2)$, we have $\slope(\alpha)<\slope(\beta)$. 
\end{lemma}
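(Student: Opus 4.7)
We argue by contradiction. Suppose there exist $\alpha\in\F(f_1)$ and $\beta\in\F(f_2)$ with $\slope(\alpha)\ge\slope(\beta)$. Recall from the proof that $f_1$ and $f_2$ are repetition-free that their big paths $\Pinf^{(f_1)}, \Pinf^{(f_2)}$ are equivalent to the infinite unions $(P[1\ton])_\infty=P[1\ton]+\Z\del_1$ and $(P[0\to1])_\infty=P[0\to1]+\Z\del_2$. Hence there exist shifts $\alpha_L\equiv\alpha\pmod{\del_1}$ and $\beta_L\equiv\beta\pmod{\del_2}$ (with $\alpha_L\in\{\alpha,\alpha-\del_1\}$ and $\beta_L\in\{\beta,\beta-\del_2\}$) such that $P[1\ton]$ crosses $P[1\ton]+\alpha_L$ and $P[0\to1]$ crosses $P[0\to1]+\beta_L$.

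Using Proposition \ref{prop:F=G}, every $\alpha\in\F(f_1)$ is either in $G_1'$ (slope $\le\slope(\del_1)$) or of the form $\del_1-\alpha'$ with $\alpha'\in G_1'$. By tracking which coset representative realizes the crossing of $P[1\ton]$ with its shift---a symmetry argument using the identity of the intersection sets $(P[1\ton])_\infty\cap((P[1\ton])_\infty+(\del_1-\alpha'))=(P[1\ton])_\infty\cap((P[1\ton])_\infty-\alpha')$---one shows that in both cases the realized shift satisfies $\slope(\alpha_L)\le\slope(\del_1)$. Analogously, $\slope(\beta_L)\ge\slope(\del_2)$.

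In the ``easy'' subcase $\alpha\in G_1'$ and $\beta\in G_2'$ we have $\slope(\alpha)=\slope(\alpha_L)\le\slope(\del_1)<\slope(\del_2)\le\slope(\beta_L)=\slope(\beta)$ by Lemma \ref{lemma:F_properties_easy}\itemref{item:sl_1<sl<sl_2}, directly contradicting $\slope(\alpha)\ge\slope(\beta)$. The three remaining subcases require producing a forbidden crossing configuration. The plan is to apply Lemma \ref{lemma:inf_slopes} (or its extension in Remark \ref{rmk:inf_slopes}) with $(a,b)=(1,n)$, $(c,d)=(0,1)$, and suitable shift parameters built from $\alpha_L, \beta_L$: in the direction where the slope inequality of the lemma is valid we obtain shifts $s,t\in\Z$ such that $P[1\ton]+s\alpha_L$ crosses $P[0\to1]+t\beta_L$ from below; translating by $-t\beta_L$ converts this into a statement that some $(P[0\to1])+\gamma$ crosses $P[1\ton]$ from above, which contradicts Proposition \ref{prop:cross_below}. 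When the slopes lie in the intermediate range $(\slope(\del_1),\slope(\del_2))$ we combine the above with the $180^\circ$-rotation of Proposition \ref{prop:rotn} applied to either $\alpha$ or $\beta$ in order to reduce to the easy subcase.

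\textbf{Expected main obstacle.} The main obstacle is the three ``mixed'' subcases where the crossing representatives satisfy $\slope(\alpha_L)\le\slope(\del_1)<\slope(\del_2)\le\slope(\beta_L)$ while the slopes of $\alpha, \beta$ themselves lie in the intermediate interval; here the slope inequality needed to apply Lemma \ref{lemma:inf_slopes} in the direction yielding the contradiction against Proposition \ref{prop:cross_below} must be recovered by carefully choosing which segment and which shift to track, and by exploiting the central symmetry of $\F(f_1), \F(f_2)$ together with the $180^\circ$-rotation. Keeping track of ``from above'' versus ``from below'' through these manipulations is the technical heart of the argument.
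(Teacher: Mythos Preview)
Your approach has a genuine gap, and it diverges substantially from the paper's argument.

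First, the equality case $\slope(\alpha)=\slope(\beta)$ is never addressed: Lemma~\ref{lemma:inf_slopes} requires a \emph{strict} slope inequality, so your plan cannot dispose of this case. The paper handles it by a separate argument: pass to the primitive vector $x$ in the common direction, observe (via Lemma~\ref{lemma:F_properties_easy}\itemref{item:semigroup}) that $x\in\F(f_1)\cap\F(f_2)$, and then apply Lemma~\ref{lemma:southwest} to $y:=\del_1-x$ (or the dual) to reach a contradiction with $x\in\F(f_2)$. This use of Lemma~\ref{lemma:southwest} is the key idea and is absent from your proposal.

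Second, even in the strict case your plan is internally inconsistent. You argue that the ``realized'' shifts satisfy $\slope(\alpha_L)\le\slope(\del_1)<\slope(\del_2)\le\slope(\beta_L)$, i.e.\ $\slope(\alpha_L)<\slope(\beta_L)$. But to obtain from Lemma~\ref{lemma:inf_slopes} that $P[1\ton]+s\alpha_L$ crosses $P[0\to1]+t\beta_L$ \emph{from below} (which is what contradicts Proposition~\ref{prop:cross_below}), you need the opposite inequality $\slope(\alpha_L)>\slope(\beta_L)$. With the inequality you actually have, the lemma would instead yield a crossing of $P[1\ton]$ by a shift of $P[0\to1]$ \emph{from below}, which is not forbidden. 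The proposed rescue via the $180^\circ$-rotation is not a concrete argument: rotation swaps ``above'' and ``below'' globally, but it also swaps the roles of $f_1$ and $f_2$, so you end up with the same obstruction on the other side.

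The paper's route is much shorter: once $\slp(\alpha)>\slp(\beta)$, apply Lemma~\ref{lemma:inf_slopes} \emph{directly with $\alpha$ and $\beta$} (no need for $\alpha_L,\beta_L$ or Proposition~\ref{prop:F=G}); the resulting crossing of $P[0\to1]+t\beta$ by $P[1\ton]+s\alpha$ from below translates into a shift of $P[0\to1]$ crossing $P[1\ton]$ from above, contradicting Proposition~\ref{prop:cross_below}. The connectivity hypothesis of Lemma~\ref{lemma:inf_slopes} is supplied by $\alpha\in\F(f_1)$ and $\beta\in\F(f_2)$ (the relevant infinite unions $(P[1\ton])_\infty+\Z\alpha$ and $(P[0\to1])_\infty+\Z\beta$ are connected), so your detour through coset representatives is unnecessary.
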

\begin{proof}
Assume otherwise that $\slope(\alpha)\geq \slope(\beta)$ for some $\alpha\in \F(f_1)$ and $\beta\in \F(f_2)$. We will consider the cases $\slp(\alpha)=\slp(\beta)$ and $\slp(\alpha)>\slp(\beta)$ separately.

Suppose that $\slp(\alpha)=\slp(\beta)$. Denote $\alpha=(a,b)$ and let $x:=\frac1{\gcd(a,b)} \alpha$. Thus each of $\alpha$ and $\beta$ is a positive integer multiple of $x$. By Lemma~\directref{lemma:F_properties_easy}{item:semigroup}, we have $x\in \F(f_1)\cap \F(f_2)$. Suppose first that $\slp(x)\geq \slp(\del_1)$. Let $y:=\del_1-x$, thus $\slp(y)\leq \slp(\del_1)$ and $y\sw \del_1$. By \cref{lemma:southwest}, $\del-y\notin \Fb(f_2)+\Z\del_2$. On the other hand, $\del-y=\del_2+x$ which clearly belongs to $\Fb(f_2)+\Z\del_2$ since $x\in \F(f_2)$, a contradiction. Applying a dual argument (cf. \cref{prop:rotn}) yields a contradiction when $\slp(x)\leq \slp(\del_2)$. Since $\slp(\del_1)<\slp(\del_2)$, we are done with the case $\slp(\alpha)=\slp(\beta)$.

Suppose now that $\slp(\alpha)>\slp(\beta)$. By \cref{lemma:inf_slopes}, we find that for some $s,t, \in \Z$, we have that $P[1\ton]+s\alpha$ crosses $P[0\to1]+t\beta$ from below, contradicting \cref{prop:cross_below}. 
\end{proof}
\begin{corollary}\label{cor:vert_12}
The points $\del_1$ and $\del_2$ are vertices of the convex hull of $\Fbf$.
\end{corollary}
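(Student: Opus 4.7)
The plan is to use central symmetry to reduce to $\del_1$, then construct a supporting linear functional maximized uniquely at $\del_1$ on $\Fbf$. By \cref{cor:cs} the involution $\alpha \mapsto \del - \alpha$ preserves $\Fbf$ and swaps $\del_1$ with $\del_2$, so vertices of $\Conv(\Fbf)$ come in symmetric pairs and it suffices to show that $\del_1$ is a vertex.

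I plan to take $L(\alpha) := k\,\n(\alpha) - n\,\k(\alpha)$, the signed perpendicular distance from the line through $0$ and $\del$. By \cref{slope:<} one has $L(0) = L(\del) = 0$, $L(\del_1) = kn_1 - nk_1 > 0$, and $L(\del_2) = -L(\del_1) < 0$, so $L$ is maximized uniquely among the four vertices of the parallelogram at $\del_1$, and hence uniquely on the whole parallelogram by linearity. Using the decomposition in \cref{cor:supset_12}, I would then verify the bound $L(\alpha) \leq L(\del_1)$ on the remaining four pieces $\F(f_1),\F(f_2),\F(f_1)+\del_2,\F(f_2)+\del_1$. For $\alpha \in \F(f_1)$ this reduces, via central symmetry of $\F(f_1)$ around $\del_1/2$, to the slope inequality $\slp(\del_1 - \alpha) \leq \slp(\del)$ for $\del_1 - \alpha \in \F(f_1)$; the other three pieces follow by a shift of origin and by applying the same argument to $\del_2$.

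To upgrade to a strict maximum, I plan to perturb $L$ to $L \pm \eps\,\k$ for some small $\eps > 0$. Ties under $L$ can occur only on the line through $\del_1$ parallel to $\del$, at lattice points of the form $\del_1 + j\,\del/d$ with $d = \gcd(k,n)$ and $j \in \Z\setminus\{0\}$, and the sign of $\eps$ is dictated by whether the tie points lie at $j > 0$ or $j < 0$. Crucially these cannot occur simultaneously: if both $\del_1 + \del/d$ and $\del_1 - \del/d$ were in $\Fbf$ then $\del_1$ would be the midpoint of two points of $\Fbf$ and hence fail to be extremal. Ruling this out independently uses the \repfree hypothesis, which forces the five pieces in \cref{cor:supset_12} to be essentially disjoint as multisets of inversions and prevents the coincidences that would otherwise allow both-sided ties.

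The main obstacle is verifying the slope bound $\slp(\del_1 - \alpha) \leq \slp(\del)$ for $\alpha \in \F(f_1)$ and its analogues on the other pieces. Here one combines the strict slope separation of \cref{lemma:slopes_12}, the central symmetry of each $\F(f_i)$ around $\del_i/2$, and the \repfree hypothesis (which forces each $\F(f_i)$ to lie outside the parallelogram $\PZ$ and so to satisfy restricted slope conditions). Once this bound is in place, the tiebreaking perturbation completes the argument that $\del_1$ is a vertex, and the $\del_2$ case follows by central symmetry.
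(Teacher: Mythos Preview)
Your overall strategy (find a linear functional uniquely maximized at $\del_1$, then use central symmetry for $\del_2$) is the right shape, but the particular functional you choose does not work, and this is a genuine gap rather than a presentation issue.

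Concretely, the functional $L(\alpha)=k\,\n(\alpha)-n\,\k(\alpha)$ need not satisfy $L(\gamma)\leq L(\del_1)$ for $\gamma\in\F(f_1)$. Take any $\gamma\in G_1'$, so $\slp(\gamma)\leq\slp(\del_1)$ and $0\prec\gamma\prec\del_1$. Your reduction correctly says $L(\gamma)\leq L(\del_1)$ is equivalent to $\slp(\del_1-\gamma)\leq\slp(\del)$, but $\del_1-\gamma\in\del_1-G_1'\subset\F(f_1)$ can have arbitrarily large slope: if $(1,n_1-1)\in\F(f_1)$ (which is allowed whenever $k_1\geq 2$), then $\del_1-(1,n_1-1)=(k_1-1,1)$ has slope $k_1-1\geq 1>\slp(\del)$, and one computes directly $L((1,n_1-1))-L(\del_1)=n(k_1-1)-k>0$. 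So $L$ is strictly larger at this point of $\Fbf$ than at $\del_1$, and no perturbation can save the argument. The claim in your final paragraph that ``each $\F(f_i)$ lies outside the parallelogram $\PZ$'' is also false; $\F(f_1)\subset\Fbf$ typically overlaps $\PZ$.

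The paper's proof avoids this by \emph{not} using the direction $\del$. Instead, \cref{lemma:slopes_12} gives $\max\slp(\F(f_1))<\min\slp(\F(f_2))$, so one may choose $\alpha\in\F(f_1)$ of maximal slope and $\beta\in\F(f_2)$ of minimal slope and observe that \emph{every} point of $\Fbf$ (via the decomposition of \cref{cor:supset_12}) lies weakly above the two rays $\del_1-\R_{\geq0}\alpha$ and $\del_1+\R_{\geq0}\beta$. Since these rays have different slopes and meet only at $\del_1$, the point $\del_1$ is a vertex. Equivalently, any linear functional whose zero-slope lies strictly between $\slp(\alpha)$ and $\slp(\beta)$ is uniquely maximized at $\del_1$; the direction $\del$ need not lie in this interval, which is exactly why your $L$ fails.
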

\begin{proof}
Indeed, let $\alpha\in \F(f_1)$ have the maximal slope and $\beta\in \F(f_2)$ have the minimal slope. Then the convex hull of $\Fbf$ is bounded from below by the rays $\del_1-\R_{\geq0}\alpha$ and $\del_1+\R_{\geq0}\beta$ and from above by the rays $\del_2-\R_{\geq0}\beta$ and $\del_2+\R_{\geq0}\alpha$.
\end{proof}

\begin{proof}[Proof of \cref{thm:convex}]
We proceed by induction on $k$ and $n$. Suppose that the statement is known for all smaller $k$ and $n$, and consider some lattice point $x\notin \Fbf$ which belongs to the convex hull of $\Fbf$. By \cref{lemma:triangle}, we have $x\notin\PZ$. By \cref{cor:vert_12}, $\del_1$ and $\del_2$ are vertices of the convex hull of $\Fbf$. By the induction hypothesis, we know that the sets $\F(f_1)$ and $\F(f_2)$ are convex.  We obtain a contradiction with \cref{cor:supset_12}, so we must have $x \in \Fbf$.
\end{proof}

\section{Concave profiles and the counting formula}
By \cref{cor:cs} and \cref{thm:convex}, if $f\in\Bknc$ is \repfree then $\Fr(f)$ is convex and centrally symmetric. In this section, we show that each convex centrally symmetric set arises in this way, as stated in \cref{thm:main:existence}. We will use this construction to prove the counting formula~\eqref{eq:counting} in \cref{sec:counting}, completing the proof of \cref{thm:main}.

\subsection{Concave profiles}\label{sec:concave}
\begin{definition}\label{dfn:Hbf}
A sequence $\Hbf:=(0=\H_0,\H_1,\dots,\H_n=k)$ of real numbers is called a \emph{concave profile} if 
\begin{itemize}
\item $0<\H_{i+1}-\H_i<1$ for all $0\leq i<n$,
\item $\H_{i+1}-\H_i\geq\H_{j+1}-\H_j$ for all $0\leq i\leq j<n$, and
\item $\h_i\neq \h_j$ for $0\leq i\neq j<n$, where we set 
\begin{equation}\label{eq:H_to_h}
  \h_r:=\H_r-\lf \H_r\rf \quad\text{for}\quad 0\leq r\leq n.
\end{equation}
\end{itemize}
\end{definition}
Given a concave profile $\Hbf$, we let %
\begin{equation*}%
  \F(\Hbf):=\{(a,b)\in[k-1]\times[n-1]\mid  k-\H_{n-b}\leq a\leq \H_b\}.
\end{equation*}
As before, we let $\Fb(\Hbf):=\F(\Hbf)\sqcup\{(0,0),(k,n)\}$. We also let $\PHbf$ be the path connecting the points $(r,\H_r)$ for $r=0,1,\dots,n$. Thus $\Fb(\Hbf)$ consists of all lattice points weakly below $\PHbf$ and weakly above the $180^\circ$-rotation $(k,n)-\PHbf$ of $\PHbf$.

\begin{figure}
  \includegraphics[width=1.0\textwidth]{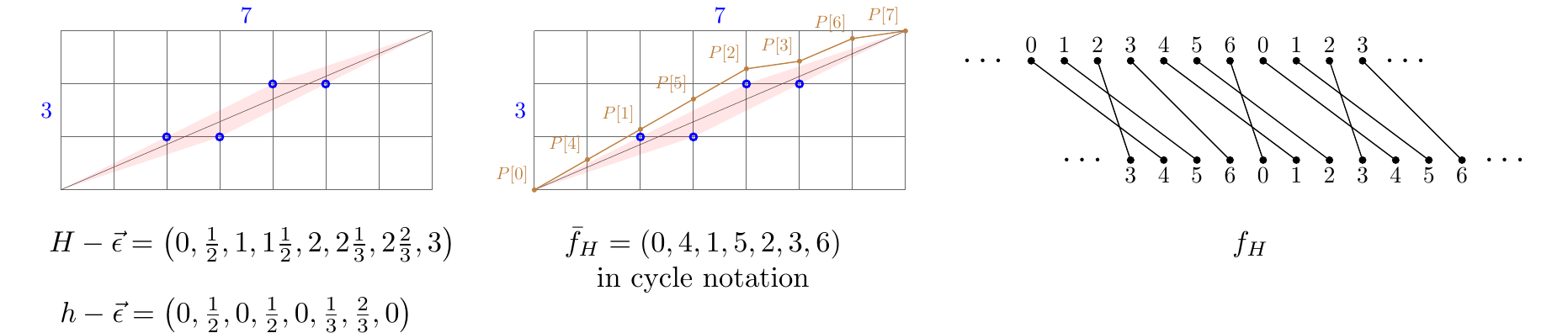}
  \caption{\label{fig:profile} Constructing a concave profile $\Hbf$ and a \repfree permutation $f_\Hbf$ for a given convex set $\Flet'$. See \cref{prop:constructing_profile} and \cref{dfn:f_Hbf}.}
\end{figure}

Denote
\begin{equation*}%
  \Fmax:=\{(a,a+b)\mid (a,b)\in[k-1]\times[n-k-1]\}.
\end{equation*}

\begin{proposition} \label{prop:constructing_profile}
Let $\Flet'\subset\Fmax$ be convex and centrally symmetric.  Then there exists a concave profile $\Hbf$ satisfying $\Flet'=\F(\Hbf)$.
\end{proposition}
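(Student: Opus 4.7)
The plan is to take the upper concave envelope of the set $\Flet' \cup \{(0,0),(k,n)\}$ and perturb it by a small, strictly concave, asymmetric bump to produce a valid concave profile whose fractional parts are all distinct.  Let $\phi\colon [0,n] \to [0,k]$ be the piecewise-linear concave function whose graph is the upper boundary of $\Conv(\Flet' \cup \{(0,0),(k,n)\})$.  Because every $(a,c) \in \Flet' \subseteq \Fmax$ satisfies $1 \leq a \leq c-1$ and, by central symmetry, $1 \leq k-a \leq n-c-1$, each linear piece of $\phi$ has slope strictly in $(0,1)$, and the finitely many slopes are uniformly bounded away from both $0$ and $1$.  Set $m_b := \lfloor \phi(b) \rfloor$.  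Then $m_0=0$, $m_n=k$, $m_{b+1}-m_b\in\{0,1\}$, and convexity of $\Flet'$ implies that $m_b$ is the largest $a$ with $(a,b)\in\Flet'\cup\{(0,0),(k,n)\}$; central symmetry identifies $k-m_{n-b}$ as the corresponding lower boundary.

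Next, set $H_b := \phi(b) + \eta\,\tau_b$ where $\tau_b := b(n-b)(1+\alpha b)$ and $\eta,\alpha>0$ are to be chosen.  For any $\alpha\in(0,1/n]$, $\tau$ is strictly concave on $[0,n]$ with $\tau_0=\tau_n=0$ and $\tau_b>0$ on $(0,n)$, and the asymmetric factor $(1+\alpha b)$ ensures $\tau_b\neq\tau_{n-b}$ whenever $b\neq n/2$; hence $(H_b)_{b=0}^n$ is strictly concave with $H_0=0$ and $H_n=k$.  Choose $\alpha$ generically in $(0,1/n]$, avoiding the finitely many values for which some pair $b_1\neq b_2$ with $\{\phi(b_1)\}=\{\phi(b_2)\}$ satisfies $\tau_{b_1}=\tau_{b_2}$ --- each such coincidence is a polynomial equation in $\alpha$ of degree at most $1$, not identically zero (as $b_1,b_2\in(0,n)$ are distinct), and so has at most one root.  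Then pick $\eta>0$ small enough to guarantee simultaneously that: (i) $\lfloor H_b\rfloor = m_b$ for every $b$, using $\eta\tau_b < 1-\{\phi(b)\}$ when $\phi(b)\notin\Z$ and $\eta\tau_b<1$ otherwise; (ii) each increment $H_{b+1}-H_b = (\phi(b+1)-\phi(b)) + \eta(\tau_{b+1}-\tau_b)$ lies strictly in $(0,1)$, which is possible since $\phi(b+1)-\phi(b)$ is bounded away from $0$ and from $1$; (iii) the fractional parts $h_b = \{\phi(b)\}+\eta\,\tau_b$ are pairwise distinct for $0\leq b<n$ (same-residue-class collisions are ruled out by the choice of $\alpha$; different-class collisions by the smallness of $\eta$).

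With these choices $(H_0,H_1,\ldots,H_n)$ is a concave profile in the sense of \cref{dfn:Hbf}.  Since $h_{n-b}>0$ for $1\leq b\leq n-1$, the integer conditions $a\leq H_b$ and $a\geq k-H_{n-b}$ reduce to $a\leq m_b$ and $a\geq k-m_{n-b}$, so $\F(\Hbf)=(\Flet'\cup\{(0,0),(k,n)\})\cap([k-1]\times[n-1])=\Flet'$, as required.  The main obstacle throughout is the distinctness of the fractional parts: a symmetric bump such as $\tau_b=b(n-b)$ fails whenever $\phi$ itself is symmetric under $b\mapsto n-b$ (for instance whenever $\phi$ is linear, as happens when $\Flet'$ is empty or a single diagonal point), forcing $h_b=h_{n-b}$; the asymmetric factor $(1+\alpha b)$ with generically chosen $\alpha$ is the essential technical device that breaks this symmetry without disturbing concavity.
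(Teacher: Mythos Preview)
Your proof is correct and follows essentially the same approach as the paper: take $\phi$ to be the upper boundary of the convex hull of $\tilde\Flet'$ and then add a small strictly concave perturbation. The paper's argument is a three-line sketch that simply says ``choose a nonnegative strictly concave sequence $\beps$ whose values are sufficiently small'' and declares that the resulting $\Hbf$ is ``clearly'' a concave profile; you have supplied the details the paper omits, in particular the verification that the fractional parts $h_b$ can be made pairwise distinct, which is where the asymmetric factor $(1+\alpha b)$ and the genericity of $\alpha$ do real work.
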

\begin{proof}
Choose a nonnegative strictly concave sequence $\beps=(\eps_0,\eps_1,\dots,\eps_n)$ whose values are sufficiently small, and let $\Hbf$ be such that the difference $\Hbf-\beps$ records the maximal vertical coordinates of the intersection of the convex hull of $\Fbi'$ with the vertical line $\n(x)=i$ for $i=0,1,\dots,n$. Then clearly $\Hbf$ is a concave profile and we have $\Flet'=\F(\Hbf)$. See \cref{fig:profile} for an example.
\end{proof}

 The following construction uses $\Hbf$ to find a bounded affine permutation $f_\Hbf\in\Bknc$ satisfying the desired properties.
\begin{definition}\label{dfn:f_Hbf}
Given a concave profile $\Hbf$, let $f=f_\Hbf\in\Bknc$ be the unique bounded affine permutation such that for all $0\leq i,j<n$, we have $\fb^i(0)<\fb^j(0)$ if and only $\h_i<\h_j$, where $\h_i,\h_j$ are defined in~\eqref{eq:H_to_h}. In other words, writing $\fb=(0,j_1,j_2,\dots,j_{n-1})$ in cycle notation, the indices $(j_1,j_2,\dots,j_{n-1})$ have the same relative order as $(h_1,h_2,\dots,h_{n-1})$. See \cref{fig:profile} for an example.
\end{definition}

\begin{proposition}\label{prop:Hbf}
Let $\Hbf$ be a concave profile and $f:=f_\Hbf$. We have:
\begin{theoremlist}
\item\label{item:Hbf1} $\left\lf \frac{f^r(0)}n\right\rf =\left\lf \H_r\right\rf$ for all $0\leq r\leq n$;
\item\label{item:Hbf2} $f\in\Bknc$ is \repfree;
\item\label{item:Hbf3} $\F(f)=\F(\Hbf)$.
\end{theoremlist}
\end{proposition}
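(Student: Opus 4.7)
For \itemref{item:Hbf1}, the plan is induction on $r$, with the trivial base $r=0$. Writing $f^r(0) = a_r n + j_r$ with $j_r = \bar f^r(0) \in \{0,1,\dots,n-1\}$, the bounded-affine condition $i < f(i) < i+n$ forces $a_{r+1} - a_r = 1$ if $j_{r+1} < j_r$ and $=0$ otherwise. On the profile side, $H_{r+1} - H_r \in (0,1)$ combined with distinctness of the $h_i$'s gives $\lfloor H_{r+1}\rfloor - \lfloor H_r\rfloor = 1$ iff $h_{r+1} < h_r$. The equivalence $j_{r+1} < j_r \iff h_{r+1} < h_r$ is precisely the defining property of $f_H$ in \cref{dfn:f_Hbf} (with a little care at $r=n-1$, where $j_n = h_n = 0$).

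For \itemref{item:Hbf2}, the membership $f \in \Bknc$ is almost automatic once \itemref{item:Hbf1} is known: $\bar f$ is an $n$-cycle by construction, and \itemref{item:Hbf1} at $r=n$, combined with $f^n(0) \in n\Z$, gives $f^n(0) = kn$, i.e.\ $\kappa(\bar f) = k$. To prove \repfree ness I will invoke \cref{prop:F_paths}: it suffices to bound by $2$ the number of intersections modulo $\Z\del$ of $\Pinf$ with each translate $\Pinf + \alpha$, $\alpha = (a,b) \notin \Z\del$. The engine is a comparison with the ``concave model'' $\PHbfinf$. Introduce the $n$-periodic, piecewise linear displacement function $D(r)$ measuring the vertical gap between $\Pinf+\alpha$ and $\Pinf$ at horizontal position $r$; by \itemref{item:Hbf1}, at integer $r$ one has
\[
  D(r) = A(r) + B(r), \qquad A(r) := a - \lfloor H_r\rfloor + \lfloor H_{r-b}\rfloor \in \Z, \qquad B(r) := (j_{r-b} - j_r)/n \in (-1,1).
\]
The analogous displacement $\tilde D(r)$ for $\PHbfinf$ has the same integer part $A(r)$ and fractional term $h_{r-b} - h_r$. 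By \cref{dfn:f_Hbf}, $\sgn(B(r)) = \sgn(h_{r-b}-h_r)$, so $\sgn(D(r)) = \sgn(\tilde D(r))$ at every integer $r$ (both are nonzero when $b \not\equiv 0 \pmod n$).

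Strict concavity of $H$ now forces $\tilde D$ to be V-shaped: its forward differences $\tilde D(r+1) - \tilde D(r) = s_{(r-b)\bmod n} - s_{r\bmod n}$, where $s_i := H_{i+1} - H_i$ is strictly decreasing in $i \in \{0,\dots,n-1\}$, are negative for $r \in [0,b)$ and positive for $r \in [b, n)$. Hence $\tilde D$ has a unique minimum at $r = b$ within each period, giving at most two sign changes and at most two zeros per period. The sign-matching argument transfers this bound to $D$, which, together with \cref{prop:F_paths}, proves \repfree ness. The degenerate case $b \equiv 0 \pmod n$ with $\alpha \neq 0$ is handled separately: such $\alpha$ lies in neither $\F(f)$ nor $\F(H)$ by the slope obstruction of \cref{lemma:slope}.

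For \itemref{item:Hbf3}, the V-shape pinpoints the intersection condition: $\alpha \in \F(f)$ iff $\tilde D$ changes sign, iff $\tilde D(b) < 0 < \tilde D(0)$. Computing $\tilde D(b) = a - H_b$ and $\tilde D(0) = H_{-b} + a = H_{n-b} - k + a$ (periodicity $H_{i+n} = H_i + k$), this becomes $k - H_{n-b} < a < H_b$, which since $H_b, H_{n-b} \notin \Z$ (from $h_b, h_{n-b} \neq 0 = h_0$) is equivalent to $k - H_{n-b} \leq a \leq H_b$, the defining condition of $\F(H)$. The main obstacle I anticipate is the sign-matching step in \itemref{item:Hbf2}: verifying carefully that when $A(r) \neq 0$ the sign of $D(r)$ is governed by $A(r)$, while when $A(r) = 0$ it is governed by $\sgn(B(r))$, and that the latter tracks $\sgn(h_{r-b}-h_r)$ \emph{exactly} by \cref{dfn:f_Hbf}. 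Everything in the construction of $f_H$ is tuned for this sign match.
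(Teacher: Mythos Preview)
Your argument is correct and follows essentially the same route as the paper: establish the sign-matching $\sgn D(r)=\sgn\tilde D(r)$ at integer points via \itemref{item:Hbf1} and \cref{dfn:f_Hbf}, then use concavity of $\Hbf$ to bound the number of sign changes of $\tilde D$ per period by two. The paper phrases the last step more succinctly (``$\PHbf$ is concave, hence meets any translate at most once, so $\PHbfinf$ meets $\PHbfinf+\alpha$ at most twice''), whereas you compute the V-shape of $\tilde D$ explicitly; these are the same observation. One small slip: the definition of a concave profile gives the slopes $s_i=H_{i+1}-H_i$ only \emph{weakly} decreasing, not strictly, but this is harmless since $\tilde D(r)\neq0$ at integers (by distinctness of the $h_i$), so a weak V-shape still yields at most two sign changes.
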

\begin{proof}
\itemref{item:Hbf1}: We prove the result by induction on $r$. The base case $r=0$ is clear. Suppose that the result holds for $0\leq r<n$. We have $\h_{r+1}\neq\h_r$. If $\h_{r+1}>\h_r$ then $\fb^{r+1}(0)>\fb^{r}(0)$, and thus $\left\lf \frac{f^{r+1}(0)}n\right\rf=\left\lf \frac{f^r(0)}n\right\rf$. It is also clear that $\h_{r+1}>\h_r$ implies $\lf\H_{r+1}\rf=\lf H_r\rf$. Similarly, if $\h_{r+1}<\h_r$ then $\fb^{r+1}(0)<\fb^r(0)$, which implies $\left\lf \frac{f^{r+1}(0)}n\right\rf=\left\lf \frac{f^r(0)}n\right\rf+1$ and $\lf\H_{r+1}\rf=\lf H_r\rf+1$.

\itemref{item:Hbf2}: Let $\PHbfinf:=\bigcup_{t\in\Z}(\PHbf+t\del)$ be the corresponding infinite path. Observe that for each $\alpha=(a,b)\in\Z^2$, we have $|\PHbfinf\cap (\PHbfinf+\alpha)|=|\Pfinf\cap(\Pfinf+\alpha)|$.  Indeed, if $p\in \PHbf$ and $q\in \PHbf+\alpha$ have the same horizontal coordinate $r\in\Z$ then $p$ is above $q$ if and only if $\H_r>a+\H_{r-b}$. This condition is equivalent to having either $\lf \H_r\rf >a+\lf \H_{r-b}\rf$ or $\lf \H_r\rf =a+\lf \H_{r-b}\rf$ and $\h_r>\h_{r-b}$. By~\itemref{item:Hbf1}, this is equivalent to having $\frac{f^r(0)}n>a+\frac{f^{r-b}(0)}n$, which means that for the integer points $p'\in \Pf$ and $q'\in \Pf+\alpha$ satisfying $\n(p')=\n(q')=r$, the point $p'$ is above $q'$. Since the path $\PHbf$ is the plot of a concave function, it intersects $\PHbf+\alpha$ at most once for each $\alpha\in\Z^2$. Thus $\PHbfinf$ intersects $\PHbfinf+\alpha$ at most twice, and therefore the same holds for $\Pfinf$. The result follows by \cref{prop:F_paths}.

\itemref{item:Hbf3}: For $\alpha\in[k-1]\times[n-1]$, $\PHbfinf$ intersects $\PHbfinf+\alpha$ if and only if $\alpha$ is below $\PHbf$ and $(k,n)$ is below $\PHbf+\alpha$. This is equivalent to $\alpha\in \F(\Hbf)$. Since $|\PHbfinf\cap (\PHbfinf+\alpha)|=|\Pfinf\cap(\Pfinf+\alpha)|$, this is equivalent to $\alpha\in\F(f)$.
\end{proof}

\subsection{Counting formula for concave profiles}\label{sec:counting}
We prove~\eqref{eq:counting} in two steps. We start by treating the case where $f=f_\Hbf$ arises from a concave profile. The case of arbitrary \repfree $f\in\Bknc$ is considered in \cref{sec:proof} below.

\begin{proposition}\label{prop:counting_concave}
Let $\Hbf$ be a concave profile and let $f:=f_\Hbf$. Then 
\begin{equation*}%
  \Cat_f=\#\DyckF(f).  
\end{equation*}
\end{proposition}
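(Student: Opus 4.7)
We will prove the proposition by induction on the pair $(n,\, |\F(\Hbf)\setminus\Fmin|)$ in lexicographic order. The base case is $\F(\Hbf)=\Fmin$: here $\Hbf$ is essentially the straight line from $(0,0)$ to $(n,k)$, with the strict concavity only needed to enforce $h_i\neq h_j$, and $f_\Hbf$ has the minimum length $\gcd(k,n)-1$ in $\Bknc$. By \cref{prop:structure} and \cref{Fmin_MSeq}, $\Cat_{f_\Hbf}=\Cat_{f_{k,n}}$. The decoupling property \cref{cor:decoupling} applied to the $d:=\gcd(k,n)$ cycles of $\overline{f_{k,n}}$ gives $\Cat_{f_{k,n}}=\Cat_{k/d,\,(n-k)/d}^{\,d}$, the $d$-th power of a rational Catalan number. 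On the combinatorial side, each path in $\DyckF(f_\Hbf)$ must avoid the $d-1$ interior lattice points on the main diagonal, so it decomposes into $d$ independent rational Dyck paths in $[k/d]\times[(n-k)/d]$, yielding the same product.

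For the inductive step with $\F(\Hbf)\supsetneq\Fmin$, I would use the structure of $f_\Hbf$ (controlled by the order statistics of $h_0,\dots,h_{n-1}$) to identify a strict concavity point $n_1$ of $\Hbf$ at which $f_\Hbf$ admits a double crossing at some index $i$. Writing $\del_1:=\kn((f_\Hbf)_1^{(i,i+1)})=(k_1,n_1)$ and $\del_2:=\del-\del_1=(k_2,n_2)$, these are vertices of the convex hull of $\Fb(\Hbf)$ on the upper and lower profile curves respectively. Let $\Hbf'$ be the concave profile obtained by ``straightening'' $\Hbf$ at index $n_1$, and let $\Hbf_1, \Hbf_2$ be the concave profiles arising from restricting $\Hbf$ to the two cycles of $\overline{(f_\Hbf)^{(i,i+1)}}$. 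I would verify the structural identities $f_{\Hbf'}=s_i f_\Hbf s_i$ and $f_{\Hbf_j}=(f_\Hbf)_j^{(i,i+1)}$ for $j=1,2$, so that $\F(\Hbf')=\F(\Hbf)\setminus\{\del_1,\del_2\}$ while $\Hbf_1$ and $\Hbf_2$ have strictly smaller $n$. The induction hypothesis applied to these simpler profiles then yields $\Cat_{f_{\Hbf'}}=\#\DyckF(f_{\Hbf'})$ and $\Cat_{f_{\Hbf_j}}=\#\DyckF(f_{\Hbf_j})$.

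Now \cref{cor:Cat_rec} reads $\Cat_{f_{\Hbf'}}=\Cat_{f_{\Hbf_1}}\,\Cat_{f_{\Hbf_2}}+\Cat_{f_\Hbf}$, so it remains to establish the matching Dyck-path identity
\begin{equation*}
\#\DyckF(f_{\Hbf'})=\#\DyckF(f_\Hbf)+\#\DyckF(f_{\Hbf_1})\cdot\#\DyckF(f_{\Hbf_2}).
\end{equation*}
The difference $\#\DyckF(f_{\Hbf'})-\#\DyckF(f_\Hbf)$ counts paths that avoid $\Fr(f_{\Hbf'})$ but visit some point of $\{(k_1,n_1-k_1),\,(k_2,n_2-k_2)\}$ (the two points of $\Fr(f_\Hbf)\setminus\Fr(f_{\Hbf'})$, drawn in $(k,n-k)$-coordinates). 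The slope inequalities $\slope(\del_1)<\slope(\del)<\slope(\del_2)$ from \cref{lemma:F_properties_easy} imply that the first point lies strictly above the main diagonal while the second lies strictly below, so only paths through $(k_1,n_1-k_1)$ contribute. Any such path splits canonically at the visit-point into a sub-path in $[k_1]\times[n_1-k_1]$ and one in $[k_2]\times[n_2-k_2]$; by convexity of $\F(\Hbf)$ combined with \cref{prop:F=G}, the obstructions in each sub-rectangle are identified with $\Fr(f_{\Hbf_1})$ and $\Fr(f_{\Hbf_2})$ respectively, yielding the product count.

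The main obstacle is the structural correspondence in the inductive step: choosing the double-crossing index $i$ in $f_\Hbf$ so that it matches a prescribed vertex of the convex hull of $\Fb(\Hbf)$, and then verifying algebraically that $s_i f_\Hbf s_i$ together with its two resolutions are themselves of the form $f_{\Hbf^{\rm new}}$ for explicitly described concave profiles. This is a delicate combinatorial identification requiring careful tracking of the cycle structure of $f_\Hbf$ through the order of the $h_r$ values. A secondary technical point is matching the ``above main diagonal'' constraint in $[k]\times[n-k]$ to the analogous constraints in the two sub-rectangles, which relies on $\F(\Hbf_j)$ containing all lattice points on the respective sub-rectangle diagonal; this follows from \cref{slope:=} applied within each sub-problem.
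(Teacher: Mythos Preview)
Your base case is incorrect. You claim that when $\F(\Hbf)=\Fmin$, one has $\Cat_{f_\Hbf}=\Cat_{f_{k,n}}=C_{k/d,(n-k)/d}^{\,d}$ with $d=\gcd(k,n)$. But $f_{k,n}$ does not lie in $\Bknc$ when $d>1$ (its reduction $\bar f_{k,n}$ has $d$ cycles), so neither \cref{prop:structure} nor \cref{Fmin_MSeq} lets you pass from a minimal-length element of $\Bknc$ to $f_{k,n}$. In fact the two Catalan numbers differ: for $(k,n)=(2,6)$ one has $\Cat_{f_{2,6}}=C_{1,2}^2=1$ by decoupling, whereas for the minimal-length element $g=f_{2,6}s_1\in\Theta_{2,6}$ the recurrence of \cref{prop:Cat_rec} gives $\Cat_g=2$. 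Your Dyck-path computation is wrong for the same reason, with the error reversed: paths that \emph{pass through} the interior diagonal points decompose into a product over sub-rectangles, not paths that \emph{avoid} them. In the example, exactly two Dyck paths in the $2\times 4$ rectangle avoid the interior diagonal point $(1,2)$, matching $\Cat_g=2$, not $1$.

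The inductive step runs in the opposite direction from the paper's argument. You try to find a double crossing of $f_\Hbf$ at a vertex $\del_1$ of the convex hull and pass to a profile $\Hbf'$ with $\F(\Hbf')\subsetneq\F(\Hbf)$, descending toward $\Fmin$. The paper instead locates the index $r$ with $h_r$ maximal: if $r\in\{1,n-1\}$ it reduces $n$ directly, and otherwise it exhibits $f':=s_{-1}f_\Hbf s_{-1}$ with $\ell(f')=\ell(f_\Hbf)+2$ whose cyclic shift $\sigma f'$ equals $f_{\Hbf'}$ for an explicit $\Hbf'$ with $\F(\Hbf')\supsetneq\F(\Hbf)$. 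This ascending direction has two advantages: the only genuine base case is $n=1$, so the delicate $\Fmin$ computation is bypassed entirely; and one never needs to show that $f_\Hbf$ itself admits a double crossing at a prescribed convex-hull vertex (only that $f'$ does, which is immediate from the explicit description). Your direction could perhaps be salvaged with a correct treatment of the base case and a proof that profile permutations always admit such double crossings, but as written the argument fails at the base.
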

\begin{proof}
Let us say that a \emph{slanted Dyck path} is a lattice path connecting $(0,0)$ to $(k,n)$ which stays above the main diagonal and consists of right steps $(0,1)$ and up-right steps $(1,1)$. Thus $\#\DyckF(f)$ counts the number of slanted Dyck paths which stay above $\PHbf$ (and do not share any points with $\PHbf$ except for the endpoints $(0,0)$ and $(k,n)$).

In order to keep track of the size of the rectangle in which $\Hbf$ lives, let us refer to $\Hbf$ as a \emph{$(k,n)$-concave profile}. We proceed by induction on $n$ using \cref{prop:Cat_rec}. The base case $n=1$ is clear. Suppose now that $n>1$ and that the claim has been shown for all $n'<n$ and also for all $(k,n)$-concave profiles $\Hbf'$ satisfying $\F(\Hbf')\supsetneq \F(\Hbf)$. Let $0<r<n$ be the index such that $0<\h_r<1$ is maximal among $\h_0,\h_1,\dots,\h_n$. Thus $\fb^r(0)=n-1$, and we let $\eps:=1-\h_r$.

Assume first that $r=1$. Let $g\in \Bxc_{k-1,n-1}$ be given by $\gb^i(0):=\fb^{i+1}(0)$ for $0<i< n$, and let $\Hbf':=(0=\H'_0,\H'_1,\dots,\H'_{n-1}=k-1)$ be given by $\H'_i:=\H_{i+1}+\eps-1$ for $0\leq i<n-1$. It is easy to check that $g=f_{\Hbf'}$ and that removing the first step (which must be up-right) of a slanted Dyck path above $\PHbf$ yields a slanted Dyck path above $\PHbfp$ and vice versa, thus $\#\DyckF(f)=\#\DyckF(g)$. Applying parts~\itemref{item:Cat_MS_fixedpt}--\itemref{item:Cat_MS_i_i+1} of \cref{prop:Cat_rec}, we find $\Cat_f=\Cat_g$. 

Assume next that $r=n-1$. Let $g\in \Bxc_{k,n-1}$ be given by $\gb^i(0):=\fb^{i}(0)$ for $0\leq i<n-1$, and let $\Hbf':=(0=\H'_0,\H'_1,\dots,\H'_{n-1}=k)$ be given by $\H'_i:=\H_{i}+\eps$ for $0< i<n$. Similarly to the above, we have $\#\DyckF(f)=\#\DyckF(g)$ and $\Cat_f=\Cat_g$.

Finally, assume that $1<r<n-1$. Let $i:=-1$, $j=0$, $a:=f^{-1}(i)$, $b:=f^{-1}(j)$, $c:=f(i)$, $d:=f(j)$. By \cref{dfn:Hbf}, we have $\H_1-\H_0\geq \H_{r+1}-\H_r$, and since $h_r$ is maximal, we get $\h_1\geq 1+\h_{r+1}-\h_r>\h_{r+1}$. Similarly, using $\H_n-\H_{n-1}\leq \H_{r}-\H_{r-1}$ we get $\h_{n-1}>\h_{r-1}$. By \cref{dfn:f_Hbf}, this implies $d>c$ and $b>a$. We thus have $a<b<i<j<c<d$.

\begin{figure}
  \includegraphics{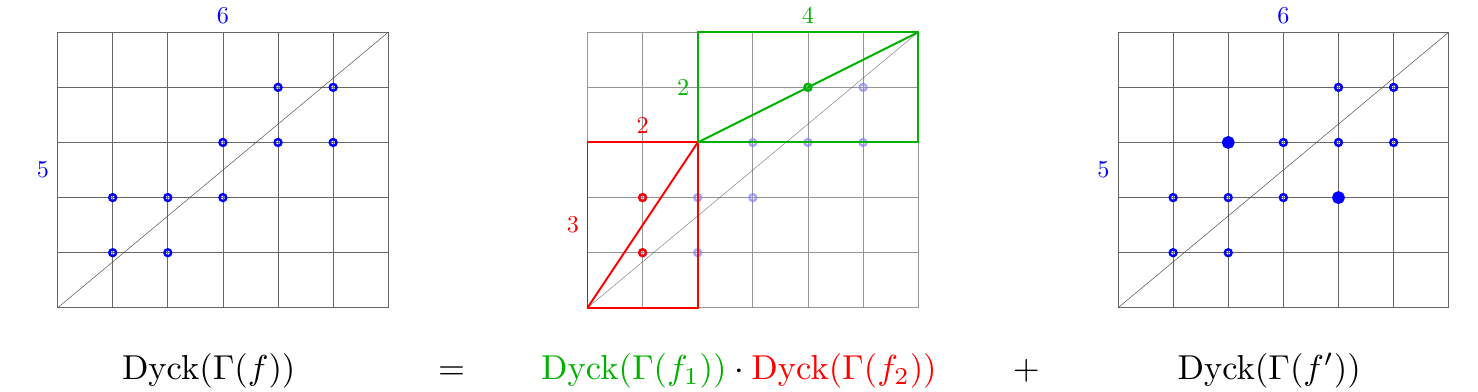}
  \caption{\label{fig:recurrence} The Dyck path recurrence in the proofs of \cref{prop:counting_concave} and \cref{thm:main:convex_counting}. Here $\Fr(f')=\Fr(f)\sqcup\{(a,b),(n-k-a,k-b)\}$ for some $a,b$. Each Dyck path above $\Fr(f)$ either passes through $(a,b)$ or stays above $\Fr(f')$.}
\end{figure}

Let $f':=s_ifs_i$. Our goal is to relate $\Cat_{f'}$ to $\Cat_{f_1}$, $\Cat_{f_2}$, and $\Cat_f$ as shown in \cref{fig:recurrence}. 
 It follows that $f'\in\Bknc$ and that $f'$ has a double crossing at $i$. Let $f_1,f_2$ be obtained from $f$ by resolving the crossing $(i,i+1)$. By \cref{cor:Cat_rec}, we have 
\begin{equation}\label{eq:Cat_rec_proof}
    \Cat_{f}=\Cat_{f_1}\Cat_{f_2}+\Cat_{f'}.
\end{equation}
Let $g = \sigma f' \in\Bknc$ be the cyclic shift of $f'$ defined in~\eqref{eq:cyc_shift}. We have $f^r(0)\equiv n-1$ and $g^r(0)\equiv 1$ modulo $n$, and for $1\leq s\leq n$ such that $s\neq r$, we have $g^s(0)=f^s(0)+1$. Choose $\eps'>\eps$ such that $\eps'<1-h_s$ for $s\neq r$ and let $\Hbf':=(0=\H'_0,\H'_1,\dots,\H'_n=k)$ be given by $\H'_s:=\H_s+\eps'$ for all $0<s<n$. One easily checks that $g=f_{\Hbf'}$. Since $\F(g)\supsetneq \F(f)$, by the induction hypothesis, we have $\Cat_g=\#\DyckF(g)$. By \cref{prop:cyc_shift_iso}, we have $\Cat_{f'}=\Cat_g$ and $\F(f')=\F(g)$, thus $\Cat_{f'}=\#\DyckF(f')$. It is straightforward to check that there exist concave profiles $\Hbf^{(1)}$ and $\Hbf^{(2)}$ such that $f_1=f_{\Hbf^{(1)}}$ and $f_2=f_{\Hbf^{(2)}}$, thus by the induction hypothesis, \eqref{eq:Cat_rec_proof} becomes
\begin{equation}\label{eq:Cat_rec_proof2}
\Cat_f=\#\DyckF(f_1)\cdot \#\DyckF(f_2)+\#\DyckF(f').
\end{equation}
On the other hand, it is clear that $\F(f')= \F(f)\sqcup \{(k_2,r),(k-k_2,n-r)\}$, where $k_2:=\frac{f^r(0)+1}n$. The number of slanted Dyck paths above $\PHbf$ passing through the point $(k_2,r)$ equals $\#\DyckF(f_1)\cdot \#\DyckF(f_2)$. The slanted Dyck paths above $\PHbf$ which do not pass through $(k_2,r)$ must stay above $\PHbfp$. Therefore $\Cat_f=\#\DyckF(f)$.
\end{proof}

\subsection{Finishing the proof of Theorem~\ref{thm:main}}\label{sec:proof}
\begin{proof}[Proof of \cref{thm:main:convex_counting}]
Let $f\in\Bknc$ be \repfree. By \cref{cor:cs}, $\Fr(f)$ is centrally symmetric. By \cref{thm:convex}, $\Fr(f)$ is convex. It remains to show the counting formula~\eqref{eq:counting}. Recall from \cref{prop:counting_concave} that the formula holds when $f=f_\Hbf$ arises from a concave profile. In particular, we may choose $\Hbf$ to be such that $\F(\Hbf)=\Fmin$. We now proceed by induction. By \cref{Fmin_MSeq}, the counting formula extends to all \repfree $f\in\Bknc$ satisfying $\F(f)=\Fmin$, which is the base case. Suppose now that $\F(f)\supsetneq \Fmin$ and that the result has been shown for all $n'<n$ and for all \repfree $f'\in\Bknc$ such that $\F(f')\subsetneq\F(f)$. (This induction proceeds in the opposite direction to the one in the proof of \cref{prop:counting_concave}.) By \cref{find_double_cross}, after applying some length-preserving simple conjugations, we may assume that $f$ has a double crossing at some $i\in\Z$. 
 Let $f':=s_ifs_i$ (thus $\ell(f)=\ell(f')+2$) and $f_1,f_2$ be obtained from $f$ by resolving the crossing $(i,i+1)$. By \cref{cor:Cat_rec}, we have 
\begin{equation}\label{eq:Cat_rec_proof3}
    \Cat_{f'}=\Cat_{f_1}\Cat_{f_2}+\Cat_{f}.
\end{equation}
This is different from~\eqref{eq:Cat_rec_proof} in that $f$ and $f'$ are swapped. By induction, we have $\Cat_{f_1}=\#\DyckF(f_1)$, $\Cat_{f_2}=\#\DyckF(f_2)$, and $\Cat_{f'}=\#\DyckF(f')$. Similarly to the proof of \cref{prop:counting_concave} (cf. \cref{fig:recurrence}), we obtain the desired result $\Cat_f=\#\DyckF(f)$.
\end{proof}

\begin{proof}[Proof of \cref{thm:main:existence}]
As explained in \cref{sec:concave}, for any convex centrally symmetric set $\Flet'\subset\Fmax$, there exists a concave profile $\Hbf$ such that $\F(\Hbf)=\Flet'$. The result follows from \cref{prop:Hbf}.
\end{proof}

\section{Other interpretations and further directions} \label{sec:other}
Computer experimentation reveals many other remarkable properties of \repfree bounded affine permutations which we state below in conjectural form. We discuss them from a knot-theoretic perspective and in relation to positroid varieties, motivated by our recent results~\cite{qtcat}. We also discuss the various interpretations of positroid Catalan numbers mentioned in the introduction.

\subsection{Euler characteristic of open positroid varieties}\label{sec:Euler}
The relation between~\cref{defn:intro} and~\eqref{eq:Catf} follows from \cite{qtcat}; here we give a brief explanation.  When $f \in \Bknc$, the torus $T$ acts freely on $\Pio_f$ and the quotient $\Pit_f:=\Pio_f/T$ is a smooth affine variety, called the \emph{positroid configuration space} in \cite{qtcat}.  The torus-equivariant Euler characteristic of $\Pio_f$ is simply the usual Euler characteristic of $\Pit_f$.  The point count is given by $\#(\Pit_f)(\Fbb_q) = \Rt_f(q)$.  
By the Grothendieck--Lefschetz fixed-point formula, when a smooth variety $X$ has polynomial point count, its Euler characteristic is equal to $\#X(\Fbb_q)|_{q=1}$.  This shows the agreement of~\cref{defn:intro} and~\eqref{eq:Catf}.

When $f \in \Bkn \setminus \Bknc$, the torus $T$ no longer acts freely on $\Pio_f$, and the torus-equivariant cohomology $H^*_{T}(\Pio_f)$ (or compactly supported cohomology $H^*_{T,c}(\Pio_f)$) is typically infinite-dimensional.  In this case, \cref{defn:intro} does not immediately apply, but a $q,t$-power series is studied in \cite{qtcat}.  In the present work, we use~\eqref{eq:Catf} for all $f \in \Bkn$, but caution the reader that the situation is more subtle when $f \notin \Bknc$.

\subsection{Generalized $q,t$-Catalan numbers}\label{sec:gen_qtcat}

The most exciting computational evidence arises when comparing our results to the constructions in~\cite{GHSR,BHMPS}.

\begin{figure}
  \includegraphics{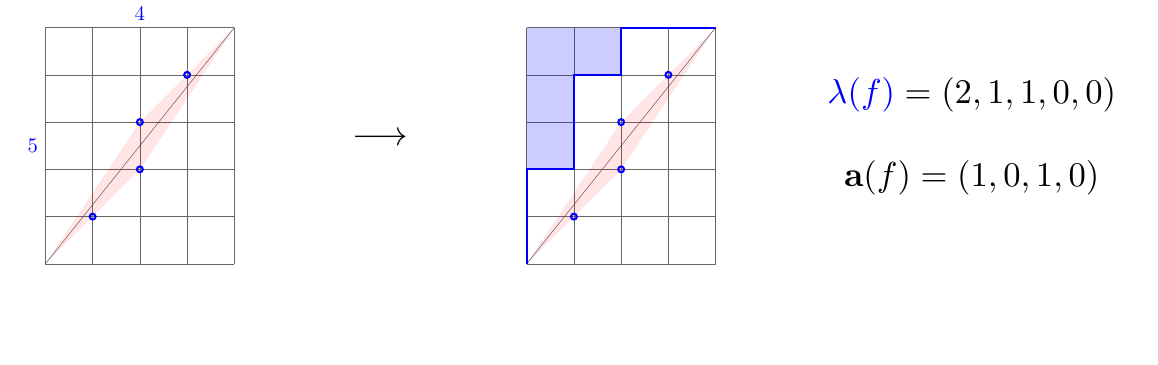}\vspace{-0.2in}
  \caption[]{\label{fig:lambda} Constructing a Young diagram $\la(f)$ and the sequence $\a(f)$ from $\Fr(f)$. Here $f\in\Bknc$ for $k=5$ and $n=9$.}
\end{figure}

Given a \repfree $f\in\Bknc$, let $\la(f)=(\la_1,\la_2,\dots,\la_k)$ be the partition consisting of all boxes inside the $k\times (n-k)$ rectangle which are above the diagonal and are strictly above all points of $\Fr(f)$; see \cref{fig:lambda}. Let $\a(f)=(a_2,\dots,a_{k})$ be given by $a_i=\la_{i-1}-\la_{i}$ for $2\leq i\leq k$. To an arbitrary sequence $\a=(a_2,\dots,a_k)$ of nonnegative integers, the authors of \cite{GHSR} associate a \emph{generalized $q,t$-Catalan number}\footnote{What we denote by $\Cata(q,t)$ was denoted by $F(a_2,\dots,a_k)$ in~\cite{GHSR}.} $\Cata(q,t)$, which may be explicitly described as a combinatorial sum over \emph{Tesler matrices}. According to~\cite[Conjecture~1.3]{GHSR} attributed to A.~Negu\c{t}, if $a_2\geq a_3\geq\cdots\geq a_k\geq0$ then $\Cata(q,t)$ has positive integer coefficients. If this condition is satisfied then we have $\a=\a(f)$ for some \repfree $f\in\Bknc$ in view of \cref{thm:main:existence}. However, the convexity condition is more general: for instance, the sequence $\a(f)=(1,0,1,0)$ in \cref{fig:lambda} is not weakly decreasing.

Each sequence $\a$ also gives rise to a \emph{Coxeter link} $\betah(\a)$. See~\cite{GN,GNR,OR_Cox,GHSR} and references therein for further details, such as an interpretation of $\Cata(q,t)$ in terms of flag Hilbert schemes and a conjectural relation between $\Cata(q,t)$ and Khovanov--Rozansky homology~\cite{KR1,KR2} of $\betah(\a)$.

In~\cite[Definition~1.9]{qtcat}, we have associated a knot $\betah_f$ to each $f\in\Bknc$ and we showed in~\cite[Theorem~1.11]{qtcat} that $\Rt_f(q)$ may be computed from the \emph{\FLY} polynomial of $\betah_f$. More generally, we gave a simple relation between the mixed Hodge polynomial $\Pcal(\Pit_f;q,t)$ and Khovanov--Rozansky homology of $\betah_f$ in~\cite[Equation~(1.25)]{qtcat}. 

\begin{conjecture}\label{conj:GHSR}
Let $f\in\Bknc$ be \repfree.%
\begin{theoremlist}
\item The knots $\betah(\a(f))$ and $\betah_f$ are isotopic.
\item\label{conj:GHSR2} Up to a monomial in $q$ and $t$, we have $\Pcal(\Pit_f;q,t)=\Cataf(q,t)$.
\item\label{conj:GHSR3} Up to a monomial in $q$, we have $\Rt_f(q)=\Cataf(q,t=1/q)$.
\item The polynomials $\Pcal(\Pit_f;q,t)$, $\Cataf(q,t)$, and $\Rt_f(q)$ have positive integer coefficients.
\end{theoremlist}
\end{conjecture}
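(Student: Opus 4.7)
The plan is to establish parts \itemref{conj:GHSR}--(iv) in sequence, with (i) providing the topological identification that unlocks the analytic statements. For (i), I would start from the braid presentation of $\betah_f$ constructed in~\cite{qtcat} via Deodhar data for $f$, and in parallel build a braid word directly out of the concave profile $\Hbf$ produced in \cref{sec:concave}. The heights $\h_1,\ldots,\h_{n-1}$ encode the cycle structure of $\fb$, and I expect that the staircase shape of $\Hbf$ translates into a product of Coxeter factors $\sigma_1\sigma_2\cdots\sigma_{k-1}$ interleaved with powers $\sigma_1^{a_i}$ indexed by the jumps $a_i = \lambda_{i-1}-\lambda_i$ — precisely the form of the Coxeter braid defining $\betah(\a(f))$ in~\cite{OR_Cox,GHSR}. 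Matching the two braids via Markov moves and the cyclic shift invariance of \cref{prop:cyc_shift_iso} would then yield that $\betah_f$ and $\betah(\a(f))$ are isotopic.

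With (i) in hand, (iii) follows from combining~\cite[Theorem~1.11]{qtcat}, which expresses $\Rt_f(q)$ as a coefficient of the HOMFLY polynomial of $\betah_f$, with the corresponding HOMFLY interpretation of $\Cata(q,1/q)$ coming from the flag Hilbert scheme picture of~\cite{GHSR,GNR}. Part (ii) is a $t$-refinement: by~\cite[Equation~(1.25)]{qtcat}, $\Pcal(\Pit_f;q,t)$ is identified with a specific coefficient of the triply graded Khovanov--Rozansky homology of $\betah_f$, and the program of~\cite{GHSR,BHMPS} predicts the analogous coefficient for $\betah(\a)$ to equal $\Cata(q,t)$; part (i) then bridges the two sides. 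Pinning down the normalizing monomial in $q$ and $t$ requires tracking the framing and writhe corrections in the two braid presentations, which I expect to be bookkeeping-heavy but essentially mechanical once (i) is proven.

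The main obstacle is (iv). Positivity of $\Rt_f(q)$ fails for general $f\in\Bkn$, so the \repfree hypothesis must genuinely enter. The cleanest route I see is to construct an affine paving of $\Pit_f$ whose cells are indexed by $\DyckF(f)$, lifting the counting formula of \cref{thm:main:convex_counting} to the level of cohomology. Such a paving would immediately yield purity of the mixed Hodge structure and nonnegativity of all the relevant Hodge numbers, and would geometrically incarnate the $\operatorname{area}$ and $\operatorname{dinv}$ statistics underlying $\Cata(q,t)$. Producing this paving for every \repfree $f$ — rather than only the torus knot case $f=f_{k,n}$ where an affine paving is classical — is the crux of the problem; I expect the cells to be carved out of $\Pit_f$ by a combinatorial scheme keyed to the convex shape of $\Fr(f)$ via the profile $\Hbf$, with the recursion of \cref{cor:Cat_rec} matching the Dyck path recurrence of \cref{fig:recurrence} cell-by-cell.
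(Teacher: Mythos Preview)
The statement you are attempting to prove is labeled \emph{Conjecture} in the paper, not Theorem, and the paper offers no proof. The only supporting evidence the authors give is the remark immediately following it: combining \cref{thm:main:convex_counting} with \cite[Proposition~1.1]{GHSR} shows $\Rt_f(1)=\Cataf(1,1)$, which is merely the $q=t=1$ shadow of part~\itemref{conj:GHSR3}. So there is nothing to compare your argument against; the authors regard all four parts as open.

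Your proposal is a reasonable research outline, but it is not a proof, and you essentially acknowledge this yourself. Concretely: for (ii) you write that ``the program of~\cite{GHSR,BHMPS} predicts the analogous coefficient for $\betah(\a)$ to equal $\Cata(q,t)$'' --- but that prediction is itself conjectural, so even granting your part~(i), part~(ii) would not follow. The same issue affects (iii): the HOMFLY interpretation of $\Cata(q,1/q)$ you invoke is part of the conjectural framework of \cite{GHSR,GNR}, not an established theorem you can cite. For (iv) you are explicit that constructing an affine paving of $\Pit_f$ indexed by $\DyckF(f)$ is ``the crux of the problem'' and you do not supply one; note also that such a paving is not known even in the torus-knot case $f=f_{k,n}$ in the form you need (what is classical there is the paving of the affine Springer fiber, and transporting that to $\Pit_f$ is nontrivial). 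Finally, for (i) the Markov-move matching of the two braid presentations is asserted rather than carried out; this step is plausible but requires genuine work, since the Deodhar-based braid of \cite{qtcat} and the Coxeter braid of \cite{OR_Cox} are built from quite different data.
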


\begin{remark}
Combining \cref{thm:main:convex_counting} with~\cite[Proposition~1.1]{GHSR}, we see that $\Rt_f(1)=\Cataf(1,1)$, in agreement with \cref{conj:GHSR3}.
\end{remark}

\Cref{conj:GHSR} becomes especially intriguing in view of~\cite[Section~7]{BHMPS}. Namely, to each sequence $\a=(a_2,\dots,a_k)$ of nonnegative integers, the authors of~\cite{BHMPS} associate a symmetric function $\omega(D_{\a}\cdot 1)$ and show that one of the coefficients in its Schur expansion equals $\Cata(q,t)$. They conjecture that when $\a$ is obtained from a Young diagram above a concave curve (that is, precisely when $\a=\a(f)$ for some \repfree $f\in\Bknc$)  then the function $\omega(D_{\a}\cdot 1)$ is $q,t$ Schur positive. The appearance of the convexity condition in both of these settings suggests that the whole symmetric function $\omega(D_{\a(f)}\cdot 1)$ may have an interpretation in terms of the geometry of $\Pio_f$, which may explain the Schur positivity phenomenon. 

A related promising direction would be to ``categorify'' the recurrence~\eqref{eq:MSrec:double_cross} to the level of Khovanov--Rozansky homology, in the spirit of~\cite{Hog,Mellit_torus,MeHog}. Conversely, it would be interesting to ``decategorify'' the \emph{categorified Young symmetrizers} of~\cite{Hog} and interpret them in the positroid language.
We hope to return to these questions in future work.

\subsection{\Ceqvce classes} Our experiments indicate that the combinatorics of \Ceqvce classes has very rigid structure. Some statements describing this structure were shown in \cref{sec:conj-double-move}. The following conjecture implies that the objects $\Cat_f$, $\Rt_f(q)$, $\Pcal(\Pio_f;q,t)$, and $\betah_f$ depend only on $\F(f)$ when $f\in\Bknc$ is \repfree.

\begin{conjecture}\label{conj:classes}
Let $\Flet\subset[k-1]\times[n-k-1]$ be centrally symmetric and convex. Then the set
\begin{equation}\label{eq:conj_eq_classes}
  \{f\in\Bknc\mid \F(f)=\Flet\}
\end{equation}
is a union of $\gcd(k,n)$-many \Ceqvce classes. They are cyclically permuted by the map $\sigma$ from~\eqref{eq:cyc_shift}.
\end{conjecture}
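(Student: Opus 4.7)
My approach is induction on $|\Flet|$, leveraging the structural results of \cref{sec:conj-double-move}. Setup: since $\sigma$ is conjugation by the length-zero element $f_{1,n}$, it is length-preserving and sends $\Bknc$ to itself; combined with $\F(\sigma f)=\F(f)$ from \cref{prop:cyc_shift_iso}, it preserves the set~\eqref{eq:conj_eq_classes} and permutes its c-equivalence classes. Since $\sigma^n=\id$, this $\sigma$-action has order dividing $n$, so the set decomposes into $\sigma$-orbits of c-equivalence classes. The conjecture thus reduces to showing (i) there is a single $\sigma$-orbit of classes, and (ii) its size equals $d:=\gcd(k,n)$.

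Transitivity (i) holds in the base case $\Flet=\Fmin$ directly from \cref{prop:structure}\itemref{item:minimumlength}, and extends to general $\Flet$ by induction: given $f,g$ with $\F(f)=\F(g)=\Flet$, reduce each by \cref{prop:structure}\itemref{item:reduce} to minimum length, invoke transitivity there, and lift back through the reverse double moves. For the orbit size, the upper bound $\sigma^d f\ceq f$ also proceeds by induction on $|\Flet|$. The base case amounts to exhibiting an explicit length-preserving sequence realizing $f_0\ceq \sigma^d f_0$ for the canonical minimum-length representative $f_0:=f_{k,n}\,s_1 s_2\cdots s_{d-1}$; the guiding observation is that $\sigma^d$ is conjugation by $f_{d,n}$, under which $f_{k,n}$ is fixed while each $s_i$ is mapped to $s_{i+d}$, a relabeling that should be realizable by a concrete sequence of simple conjugations. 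For the inductive step, c-conjugate $f$ via \cref{find_double_cross} to exhibit a double crossing at some $i$, set $f':=s_ifs_i$ with $\F(f')\subsetneq\Flet$, apply the inductive hypothesis $\sigma^d f'\ceq f'$, and lift back through the reverse double move. The lower bound (at least $d$ classes) is obtained by constructing a c-equivalence invariant $\pi:\Bknc\to\Z/d$ satisfying $\pi(\sigma f)=\pi(f)+1\pmod d$: reduce $f$ to a minimum-length element, necessarily c-equivalent to some $\sigma^s f_0$ by \cref{prop:structure}\itemref{item:minimumlength}, and set $\pi(f):=s\bmod d$; well-definedness amounts to a diamond-style confluence argument showing that all reductions agree modulo $\sigma^d$.

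The main obstacle is the \emph{lifting step} in the inductive upper bound: translating a length-preserving sequence realizing $\sigma^d f'\ceq f'$ into one realizing $\sigma^d f\ceq f$ forces one to insert the extra inversion pair $\{\alpha,\del-\alpha\}=\F(f)\setminus\F(f')$ into each intermediate permutation at consistently tracked positions and to verify each intermediate step remains length-preserving in $\Bkn$, potentially detouring through $\Bkn\setminus\Bknc$. The combinatorial interaction between the local moves $s_j$ and the added crossing is the most delicate point of the argument. A secondary obstacle is the confluence needed for the invariant $\pi$ to be well-defined modulo $\sigma^d$. An alternative strategy bypassing the lifting is to work with the concave-profile representatives of \cref{sec:concave}: for each convex centrally symmetric $\Flet$, classify the profiles $\Hbf$ with $\F(\Hbf)=\Flet$ up to their natural cyclic symmetries, prove that symmetry-inequivalent profiles yield non-c-equivalent $f_\Hbf$, and verify that every $f\in\Bknc$ with $\F(f)=\Flet$ is c-equivalent to some $f_\Hbf$.
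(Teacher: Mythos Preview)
This statement is a \emph{conjecture} in the paper, not a theorem; the paper does not prove it. What the paper does establish (in the Remark immediately following the conjecture) is only the lower bound: it defines an explicit integer invariant $\difmodb(f)\in\{0,1,\dots,d-1\}$ via $\difmod(f):=\<\delp,\Pfinf\>-\epskn$, checks directly that $\difmodb$ is constant on c-equivalence classes and satisfies $\difmodb(\sigma f)=\difmodb(f)+1$, and concludes that~\eqref{eq:conj_eq_classes} contains \emph{at least} $d$ c-equivalence classes cyclically permuted by $\sigma$. The paper then states explicitly that the remaining content of the conjecture --- that $(\Fr(f),\difmodb(f))=(\Fr(g),\difmodb(g))$ implies $f\ceq g$ --- is open.

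Your lower-bound strategy (constructing $\pi$ by reducing to minimum length and reading off the $\sigma$-exponent) is therefore superseded: the paper's $\difmodb$ is a closed-form invariant with no confluence issues, and you should simply use it. For the upper bound and transitivity, however, the gap you flag in the lifting step is not a technicality to be cleaned up --- it is precisely the open part of the conjecture. Concretely, your inductive steps for both (i) and (ii) require the following: if $f'\ceq g'$ (or $f'\ceq\sigma^d f'$) and $f,g$ are obtained from $f',g'$ by \emph{reverse} double moves at possibly different positions, then $f\ceq g$. But reverse double moves increase length by $2$ and are not c-equivalences, and there is no mechanism in the paper (or in your plan) for transporting a length-preserving conjugation sequence at level $\ell(f')$ up to level $\ell(f)=\ell(f')+2$ while tracking the extra inversion pair. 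Your ``reduce, invoke transitivity at the bottom, lift back'' scheme for (i) fails for the same reason: \cref{prop:structure}\itemref{item:reduce} gets you \emph{down} via double moves and c-equivalence, but gives no way back up that stays within c-equivalence. Absent a new idea here, the argument does not close, and the paper correctly leaves the statement as a conjecture.
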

\begin{remark}
Let $\epskn$ be equal to $1/2$ if both $k$ and $n$ are even and to $0$ otherwise. 
 For $f\in\Bknc$, denote
\begin{equation*}%
  \difmod(f):=\<\delp,\Pfinf\>-\epskn,
\end{equation*}
cf.~\eqref{eq:delp_Pinf}. It is not hard to see that $\difmod(f)$ is always an integer, so we let $0\leq \difmodb(f)\leq d-1$ be obtained by taking $\difmod(f)$ modulo $d:=\gcd(k,n)$.

Let $f\in \Bknc$ be \repfree. Observe that if $g:=\shf$ is the cyclic shift of $f$ then $\difmod(g)-\difmod(f)=1$, however, if $f'\ceq f$ then $\difmodb(f')=\difmodb(f)$. We therefore see that the set~\eqref{eq:conj_eq_classes} contains at least $d$ distinct \Ceqvce classes, cyclically permuted by $\sigma$. The content of \cref{conj:classes} is that if $f,g\in\Bknc$ are \repfree and satisfy $(\Fr(f),\difmodb(f))=(\Fr(g),\difmodb(g))$ then $f\ceq g$.
\end{remark}

\begin{figure}
  \includegraphics[width=1.0\textwidth]{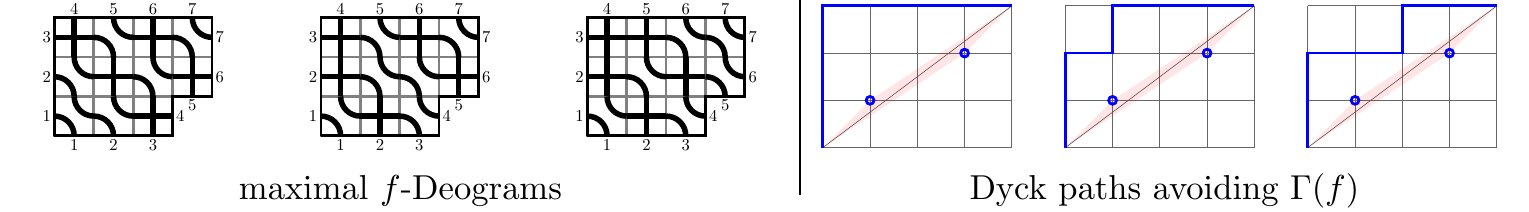}
  \caption[]{\label{fig:deograms} For each \repfree $f\in\Bknc$, the number of maximal $f$-Deograms (left) equals the number of Dyck paths which stay above $\Fr(f)$ (right); see \cref{ex:deograms} and \cref{prob:bij}.}
\end{figure}

\subsection{Deograms}\label{sec:deograms}
In~\cite[Section~9]{qtcat}, we explained that for each $f\in\Bknc$, the positroid Catalan number $\Cat_f$ equals the number $\#\Gom_f$ of certain combinatorial objects called \emph{maximal $f$-Deograms}, see~\cite[Definition~9.3]{qtcat}. Here, $\Gom_f$ denotes the set of maximal $f$-Deograms, defined as follows. First, by~\cite[Proposition~3.15]{KLS}, there exists a unique pair $(v,w)$ of permutations in $S_n$ such that $v\leq w$, $w^{-1}(1)<\dots<w^{-1}(k)$, $w^{-1}(k+1)<\dots<w^{-1}(n)$, and $\fb=wv^{-1}$. Thus $w$ is \emph{$k$-Grassmannian}, and each such permutation corresponds to a Young diagram $\la(w)$ that fits inside a $k\times (n-k)$ rectangle. An $f$-Deogram is obtained by placing a crossing~\crossing or an elbow~\elbow inside each box of $\la(w)$ so that (i) the resulting strand permutation is $v$, and (ii) a certain \emph{distinguished condition} is satisfied. An $f$-Deogram is \emph{maximal} if it contains the maximal possible number of crossings, equivalently, assuming $f\in\Bknc$, if it contains exactly $n-1$ elbows. In view of \cref{thm:main:convex_counting}, when $f$ is \repfree, $\Cat_f=\#\DyckF(f)$ also counts Dyck paths avoiding $\Fr(f)$.
\begin{example}\label{ex:deograms}
Let $\fb=(1,4,6,2,5,7,3)$ in cycle notation, and thus $f\in\Bknc$ for $k=3$ and $n=7$. The unique factorization $\fb=wv^{-1}$ as above is given by $v=\begin{pmatrix}
1&2&3&4&5&6&7\\
1&2&4&3&5&6&7
\end{pmatrix}$ and $w=\begin{pmatrix}
1&2&3&4&5&6&7\\
4&5&6&1&7&2&3
\end{pmatrix}$ in two-line notation. The three maximal $f$-Deograms are shown in \figref{fig:deograms}(left). The three Dyck paths avoiding $\Fr(f)$ are shown in \figref{fig:deograms}(right).
\end{example}

 The following problem extends~\cite[Problem~9.6]{qtcat}.
\begin{problem}\label{prob:bij}
Let $f\in\Bknc$ be \repfree. Find a bijection between $\Gom_f$ and $\DyckF(f)$. 
\end{problem}

\subsection{Fiedler invariant and knots in a thickened torus}
Let $\T^2:=\R^2/\Z^2$ be a torus and $K:S^1\hookrightarrow \T^2\times \R$ be a knot inside a thickened torus. To this data, Fiedler~\cite{Fiedler} associates an isotopy invariant $W_K$ called the \emph{small state sum}. Let us instead identify $\T^2$ with $\R^2/\<(0,n),(1,0)\>$. For $f\in\Bknc$ and $P:=\Pf$, let $\Pb$ be the image of $P\subset\R^2$ under the quotient map $\R^2\to \T^2$. The points where $\Pb$ intersects itself correspond precisely to the inversions of $f$. Thus we may define a knot $K_f$ inside $\T^2\times \R$ whose projection to $\T^2$ coincides with $\Pb$, and for each inversion $(i,j)$ of $f$, the line segment connecting $P[i]$ to $P[f(i)]$ lies above the line segment connecting $P[j]$ to $P[f(j)]$. See \cref{fig:knot}.

\begin{figure}
  \includegraphics[width=0.6\textwidth]{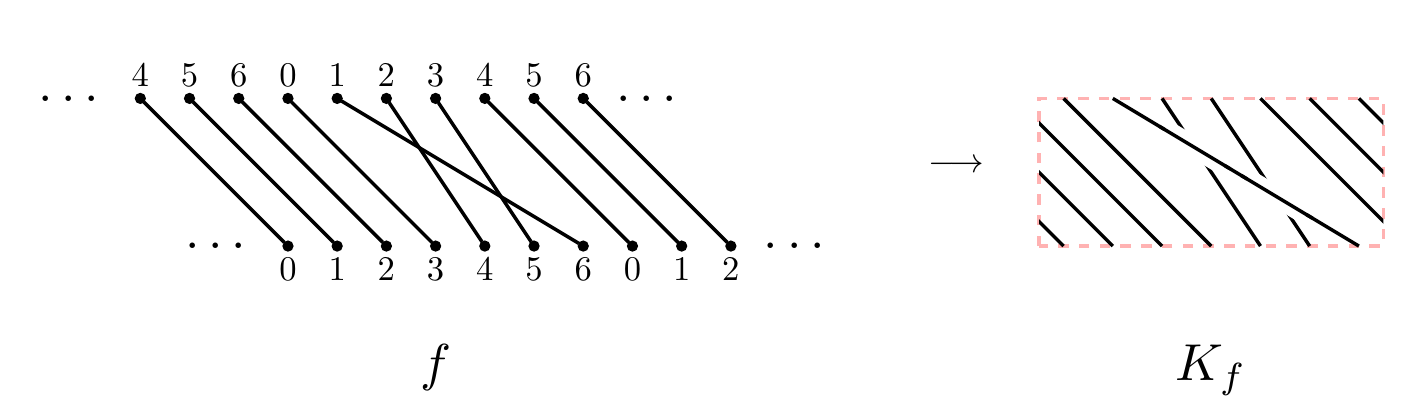}
  \caption{\label{fig:knot} Associating a knot $K_f$ inside $\T^2\times \R$ to $f\in\Bknc$. The dashed rectangle on the right represents the fundamental domain of $\T^2$.}
\end{figure}

It is straightforward to check that the formal sum $W_{K_f}$ contains essentially the same information as the inversion multiset $\F(f)$. This leads to the following question: which parts of our story generalize to arbitrary \emph{\repfree knots} inside $\T^2\times \R$? Here we say that a knot $K$ inside $\T^2\times \R$ is \repfree if each nonzero coefficient of $W_{K_f}$ is equal to $\pm1$. For example, it would be interesting to determine which subsets of $\Z^2/\Z\delta$ may appear with nonzero coefficients inside $W_{K}$ for a \repfree $K$, and whether the \FLY polynomial of $K$ (or its Khovanov--Rozansky homology) have nice properties when $K$ is \repfree.

\bibliographystyle{alpha}
\bibliography{cat_combin}

\newcommand{\etalchar}[1]{$^{#1}$}
\begin{thebibliography}{AHBC{\etalchar{+}}16}

\bibitem[AHBC{\etalchar{+}}16]{abcgpt}
Nima Arkani-Hamed, Jacob Bourjaily, Freddy Cachazo, Alexander Goncharov,
  Alexander Postnikov, and Jaroslav Trnka.
\newblock {\em Grassmannian Geometry of Scattering Amplitudes}.
\newblock Cambridge University Press, Cambridge, 2016.

\bibitem[BGY06]{BGY}
K.~A. Brown, K.~R. Goodearl, and M.~Yakimov.
\newblock Poisson structures on affine spaces and flag varieties. {I}. {M}atrix
  affine {P}oisson space.
\newblock {\em Adv. Math.}, 206(2):567--629, 2006.

\bibitem[BHM{\etalchar{+}}21]{BHMPS}
Jonah Blasiak, Mark Haiman, Jennifer Morse, Anna Pun, and George~H. Seelinger.
\newblock {A Shuffle Theorem for Paths Under Any Line}.
\newblock {\em \arxiv{2102.07931v1}}, 2021.

\bibitem[Deo85]{Deodhar}
Vinay~V. Deodhar.
\newblock On some geometric aspects of {B}ruhat orderings. {I}. {A} finer
  decomposition of {B}ruhat cells.
\newblock {\em Invent. Math.}, 79(3):499--511, 1985.

\bibitem[Fie93]{Fiedler}
Thomas Fiedler.
\newblock A small state sum for knots.
\newblock {\em Topology}, 32(2):281--294, 1993.

\bibitem[FYH{\etalchar{+}}85]{HOMFLY}
P.~Freyd, D.~Yetter, J.~Hoste, W.~B.~R. Lickorish, K.~Millett, and A.~Ocneanu.
\newblock A new polynomial invariant of knots and links.
\newblock {\em Bull. Amer. Math. Soc. (N.S.)}, 12(2):239--246, 1985.

\bibitem[GHSR20]{GHSR}
Eugene Gorsky, Graham Hawkes, Anne Schilling, and Julianne Rainbolt.
\newblock Generalized {$q, t$}-{C}atalan numbers.
\newblock {\em Algebr. Comb.}, 3(4):855--886, 2020.

\bibitem[GL19]{positroidcluster}
Pavel Galashin and Thomas Lam.
\newblock {Positroid varieties and cluster algebras}.
\newblock {\em \arxiv{1906.03501v1}}, 2019.

\bibitem[GL20]{qtcat}
Pavel Galashin and Thomas Lam.
\newblock {Positroids, knots, and $q,t$-Catalan numbers}.
\newblock {\em \arxiv{2012.09745v2}}, 2020.

\bibitem[GN15]{GN}
Eugene Gorsky and Andrei Negu\c{t}.
\newblock Refined knot invariants and {H}ilbert schemes.
\newblock {\em J. Math. Pures Appl. (9)}, 104(3):403--435, 2015.

\bibitem[GNR21]{GNR}
Eugene Gorsky, Andrei Negu\c{t}, and Jacob Rasmussen.
\newblock Flag {H}ilbert schemes, colored projectors and {K}hovanov-{R}ozansky
  homology.
\newblock {\em Adv. Math.}, 378:107542, 115, 2021.

\bibitem[HM19]{MeHog}
Matthew Hogancamp and Anton Mellit.
\newblock {Torus link homology}.
\newblock {\em \arxiv{1909.00418v1}}, 2019.

\bibitem[HN14]{HN}
Xuhua He and Sian Nie.
\newblock Minimal length elements of extended affine {W}eyl groups.
\newblock {\em Compos. Math.}, 150(11):1903--1927, 2014.

\bibitem[Hog18]{Hog}
Matthew Hogancamp.
\newblock Categorified {Y}oung symmetrizers and stable homology of torus links.
\newblock {\em Geom. Topol.}, 22(5):2943--3002, 2018.

\bibitem[HY12]{HY}
Xuhua He and Zhongwei Yang.
\newblock Elements with finite {C}oxeter part in an affine {W}eyl group.
\newblock {\em J. Algebra}, 372:204--210, 2012.

\bibitem[Jon87]{Jones}
V.~F.~R. Jones.
\newblock Hecke algebra representations of braid groups and link polynomials.
\newblock {\em Ann. of Math. (2)}, 126(2):335--388, 1987.

\bibitem[KL79]{KL1}
David Kazhdan and George Lusztig.
\newblock Representations of {C}oxeter groups and {H}ecke algebras.
\newblock {\em Invent. Math.}, 53(2):165--184, 1979.

\bibitem[KL80]{KL2}
David Kazhdan and George Lusztig.
\newblock Schubert varieties and {P}oincar\'{e} duality.
\newblock In {\em Geometry of the {L}aplace operator ({P}roc. {S}ympos. {P}ure
  {M}ath., {U}niv. {H}awaii, {H}onolulu, {H}awaii, 1979)}, Proc. Sympos. Pure
  Math., XXXVI, pages 185--203. Amer. Math. Soc., Providence, R.I., 1980.

\bibitem[KLS13]{KLS}
Allen Knutson, Thomas Lam, and David~E. Speyer.
\newblock Positroid varieties: juggling and geometry.
\newblock {\em Compos. Math.}, 149(10):1710--1752, 2013.

\bibitem[KR08a]{KR1}
Mikhail Khovanov and Lev Rozansky.
\newblock Matrix factorizations and link homology.
\newblock {\em Fund. Math.}, 199(1):1--91, 2008.

\bibitem[KR08b]{KR2}
Mikhail Khovanov and Lev Rozansky.
\newblock Matrix factorizations and link homology. {II}.
\newblock {\em Geom. Topol.}, 12(3):1387--1425, 2008.

\bibitem[Lam16]{LamCDM_tnn}
Thomas Lam.
\newblock Totally nonnegative {G}rassmannian and {G}rassmann polytopes.
\newblock In {\em Current developments in mathematics 2014}, pages 51--152.
  Int. Press, Somerville, MA, 2016.

\bibitem[Lus98]{LusIntro}
George Lusztig.
\newblock Introduction to total positivity.
\newblock In {\em Positivity in {L}ie theory: open problems}, volume~26 of {\em
  De Gruyter Exp. Math.}, pages 133--145. de Gruyter, Berlin, 1998.

\bibitem[Mel17]{Mellit_torus}
Anton Mellit.
\newblock {Homology of torus knots}.
\newblock {\em \arxiv{1704.07630v1}}, 2017.

\bibitem[MS16]{MSLA}
Greg Muller and David~E. Speyer.
\newblock Cluster algebras of {G}rassmannians are locally acyclic.
\newblock {\em Proc. Amer. Math. Soc.}, 144(8):3267--3281, 2016.

\bibitem[OR17]{OR_Cox}
Alexei Oblomkov and Lev Rozansky.
\newblock {HOMFLYPT homology of Coxeter links}.
\newblock {\em \arxiv{1706.00124v2}}, 2017.

\bibitem[Pos06]{Pos}
Alexander Postnikov.
\newblock Total positivity, {G}rassmannians, and networks.
\newblock {\em \textup{Preprint,
  \href{http://math.mit.edu/~apost/papers/tpgrass.pdf}{\texttt{http://
  math.mit.edu/\textasciitilde apost/papers/tpgrass.pdf}}}}, 2006.

\bibitem[PT87]{PT}
J\'{o}zef~H. Przytycki and Pawe\l{} Traczyk.
\newblock Conway algebras and skein equivalence of links.
\newblock {\em Proc. Amer. Math. Soc.}, 100(4):744--748, 1987.

\end{thebibliography}

\end{document}